\newtheorem{theorem}{Theorem}
\newtheorem{lemma}[theorem]{Lemma}
\newtheorem{cor}[theorem]{Corollary}
\newtheorem{prop}[theorem]{Proposition}
\newtheorem{claim}{Claim}
\newcommand{\Q}{\mathbb{Q}}
\newcommand{\A}{\mathcal{A}}
\newcommand{\K}{\mathcal{K}}
\newcommand{\B}{\mathcal{B}}
\newcommand{\F}{\mathcal{F}}
\newcommand{\M}{\mathcal{M}}
\newcommand{\SK}{\mathcal{S}}
\newcommand{\zerof}{\mathbf{0}}
\newcommand{\tr}{\ge_T}
\newcommand{\te}{=_T}
\newcommand{\ctm}{\mathfrak{c}}
\newcommand{\cl}[1]{\overline{#1}}
\newcommand{\supp}{\mathrm{supp} \, }
\newcommand{\add}[1]{\mathop{\mathrm{add}}(#1)}
\newcommand{\cof}[1]{\mathop{\mathrm{cof}}(#1)}
\newcommand{\spec}[1]{\mathop{\mathrm{spec}}(#1)}
\newcommand{\SigOm}{\Sigma (\omega^{\omega_1})}
\title{The Tukey Order and Subsets of $\omega_1$}
\author{Paul Gartside}
\address{Department
    of Mathematics, University of Pittsburgh, Pittsburgh, PA~15260, USA}
\email{gartside@math.pitt.edu} 
\author{Ana Mamatelashvili}
\address{Department of Mathematics and Statistics, Auburn University, Auburn, AL~36849, USA}
\email{azm0105@auburn.edu}
\keywords{Partial order, Tukey order, compact set, subsets of $\omega_1$, stationary, separable metrizable space}
\subjclass[2010]{03E04, 06A07, 54F05 (54C35, 54E35)}
\begin{document}

\begin{abstract}
One partially ordered set, $Q$, is a \emph{Tukey quotient} of another, $P$, if there is a map  $\phi : P \to Q$ carrying cofinal sets of $P$ to cofinal sets of $Q$. Two partial orders which are mutual Tukey quotients are said to be \emph{Tukey equivalent}. Let $X$ be a space and denote by $\K(X)$ the set of compact subsets of $X$, ordered by inclusion. The principal object of this paper is to analyze the Tukey equivalence classes of $\K(S)$ corresponding to various subspaces $S$ of $\omega_1$, their Tukey invariants, and hence the Tukey relations between them. It is shown that $\omega^\omega$ is a strict Tukey quotient of $\Sigma(\omega^{\omega_1})$ and thus we distinguish between two Tukey classes  out of Isbell's ten partially ordered sets from \cite{Isb}. The relationships between Tukey equivalence classes of $\K(S)$, where $S$ is a subspace of $\omega_1$, and $\K(M)$, where $M$ is a separable metrizable space, are revealed. Applications are given to function spaces.
\end{abstract}

\maketitle

\section{Introduction}

A partially ordered set (poset) is \emph{directed} if every two elements have an upper bound. One directed partially ordered set $Q$ is a \emph{Tukey quotient} of another, $P$, denoted $P\tr Q$, if there is a map $\phi : P\to Q$, called a \emph{Tukey quotient}, that takes cofinal subsets of $P$ to cofinal subsets of $Q$. Posets that are mutual Tukey quotients are said to be \emph{Tukey equivalent}. The Tukey relation distinguishes between cofinal structures of posets and Tukey equivalence classes are sometimes called cofinality types. Introduced to study Moore-Smith convergence in topology \cite{MooreSmith, Tukey}, Tukey quotients and equivalence are fundamental notions of order theory, and are being actively investigated, especially in connection with partial orders arising naturally in analysis and topology \cite{DobrTod1, DobrTod2, Frem, Fr2, Fr3, LouVel, Mil, MooreSol, RaghTod, SolTod1, SolTod2}. 

The quest for interesting new Tukey classes began with Isbell's work in \cite{Isb}. In this paper he presented ten partially ordered sets. The first five of these posets, namely, $1$, $\omega$, $\omega_1$, $\omega \times \omega_1$ and $[\omega_1]^{<\omega}$, previously known to be Tukey-distinct, are of size $\leq \omega_1$. The other five partially ordered sets are of size $\ctm$. Isbell showed that the extra five posets provide at least two additional Tukey classes because two of them, namely $\omega^\omega$ and $\Sigma(\omega^{\omega_1})$,  have a certain Tukey invariant property which the other three do not possess (here $\Sigma(\omega^{\omega_1})$ is the subset of $\omega^{\omega_1}$ consisting of all elements which are eventually constantly equal to $0$). Up until now the Tukey classes of $\omega^\omega$ and $\Sigma(\omega^{\omega_1})$ have remained un-distinguished. 

Isbell asked a natural question: how many Tukey classes are there of size $\leq  \omega_1$? Todor\v{c}evi\'{c} in \cite{Tod1} showed that depending on set theoretic assumptions $1$, $\omega$, $\omega_1$, $\omega \times \omega_1$ and $[\omega_1]^{<\omega}$ might be the only such posets or there might be $2^{\omega_1}$, the largest possible number. 
In order to show the existence of $2^{\omega_1}$-many distinct Tukey classes of posets of size $\omega_1$, Todor\v{c}evi\'{c} considered certain special partially ordered sets. For a space $X$, let $\K(X)$ denote the collection of all compact subsets of $X$ ordered by set inclusion. Todor\v{c}evi\'{c} used Tukey classes $\K(S)$, where $S$ is a subset of $\omega_1$. 

The main goal of this paper is to understand the Tukey classes $\K(S)$, where $S$ is a subset of $\omega_1$ -- what they are, and how they are related. Note that each of  $1$, $\omega$, $\omega_1$, $\omega \times \omega_1$ and $[\omega_1]^{<\omega}$, as well as $\omega^\omega$ and $\Sigma(\omega^{\omega_1})$ have this form. To see this, let $S_0$ be the set of all isolated points in $\omega_1$, let $S_1 = (\omega \cdot \omega +1) \backslash \{ \omega \cdot n : n\in \omega \}$ and let $S_2$ be $S_0$ together with all elements of $\omega_1$ that are limits of limit ordinals. Then $\omega_1 \te \K(\omega_1)$, $\omega \times \omega_1 \te \K(\omega_1 \backslash \{\omega\})$, $[\omega_1]^{<\omega} \te \K(S_0)$, $\omega^\omega = \K(S_1)$, and $\Sigma (\omega^{\omega_1}) \te \K(S_2)$.

In Section~\ref{structure_S} we study how posets corresponding to various subsets of $\omega_1$ are divided between different Tukey equivalence classes. We then study the Tukey relations between these classes. In particular, we show that $\omega^\omega$ is a strict Tukey quotient of $\Sigma(\omega^{\omega_1})$. In order to distinguish between Tukey classes, in Section~\ref{order_properties} we  consider Tukey invariants of $\K(S)$'s such as: cofinality, additivity and calibres. In Section~\ref{background} we present the relative Tukey ordering, a more general version of the Tukey ordering, as well as related definitions and basic lemmas.  

In the last section we  give applications. Note that each bounded subset of $\omega_1$ is Polish (i.e. separable and completely metrizable). Posets of the form $\K(M)$, where $M$ is separable and metrizable have been studied extensively \cite{Chris, Fr2, KM}. In works of Cascales, Orihuela and Tkachuk a special interest has been given to investigating properties of spaces $X$ such that $\K(M) \tr \K(X)$ for some separable metrizable $M$ \cite{CO1, cot}. We completely solve the problem of  which sets $S\subseteq \omega_1$ have the property that $\K(M) \tr \K(S)$ for some separable metrizable $M$. Recently, in \cite{KM}, the authors constructed $2^\ctm$-many Tukey classes of $\K(M)$'s, with $M$ separable and metrizable, and used them to show that there are large families of function spaces corresponding to these $M$'s. Here we use Todor\v{c}evi\'{c}'s $2^{\omega_1}$-many distinct Tukey classes of $\K(S)$'s, where $S$ is a subset of $\omega_1$, to derive similar results about function spaces.

\section{Definitions and Basic Properties}\label{background}

Let $P$ and $Q$ be directed partially ordered sets. Let $P'$ be a subset of $P$ and $Q'$ be a subset of $Q$. A subset $C$ of $P$ is \emph{cofinal for $P'$} (in $P$) if whenever $p'$ is in $P'$ there is a $c$ in $C$ such that $p' \le c$ (and similarly for $Q$).
For pairs $(P',P)$ and $(Q',Q)$,  a map $\phi : P \to Q$ is a \emph{relative Tukey quotient} if $\phi$ takes subsets of $P$ cofinal for $P'$ to sets cofinal for $Q'$ in $Q$, and we write $(P',P) \tr (Q',Q)$. Pairs that are mutual relative Tukey quotients are said to be Tukey equivalent. The relative Tukey ordering was introduced in \cite{KM}. Note that $(P,P) \tr (Q,Q)$ if and only if $P \tr Q$. So no ambiguity arises if  we abbreviate $(P,P) \tr (Q',Q)$ to $P \tr (Q',Q)$, and similarly when $Q'=Q$. In this section we will give some basic connections of the (relative) Tukey ordering with order properties of posets. Sketches of proofs of some of these results were presented in \cite{KM} and we omit them here. Full proofs can be found in \cite{Anas_thesis}.  

There is a dual form of relative Tukey quotients. Call $\psi : Q' \rightarrow P'$ a relative Tukey map from $(Q', Q)$ to $(P', P)$  if and only if  for any $U\subseteq Q'$ unbounded in $Q$, $\psi(U) \subseteq P'$ is unbounded in $P$. Recall that a poset $P$ is Dedekind complete  if and only if  every  subset of $P$ with an upper bound has a least upper bound. 
\begin{lemma}\label{trtm}
(1) There is a relative Tukey quotient $\phi $ from $(P', P)$ to $(Q', Q)$ if and only if there is relative Tukey map $\psi$ from $(Q', Q)$ to $(P', P)$.

(2) If $(P',P) \tr (Q',Q)$ and  $Q$ is Dedekind complete then there is a Tukey quotient witnessing this that is order-preserving. Conversely, if $\phi : P \to Q$ is order-preserving and $\phi(P')$ is cofinal for $Q'$ in $Q$, then $\phi$ is a relative Tukey quotient from $(P',P)$ to $(Q',Q)$. 

(3) If $C$ is a cofinal set of a poset $P$ then $C$ and $P$ are Tukey equivalent.
\end{lemma}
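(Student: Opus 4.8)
The plan is to treat the three parts in order, with part~(1) carrying the real content --- the usual Galois--Tukey duality, rephrased relatively --- and parts~(2) and~(3) reducing to it together with short direct arguments.

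For part~(1), in the forward direction I would start from a relative Tukey quotient $\phi : P \to Q$ from $(P',P)$ to $(Q',Q)$, fix $q' \in Q'$, and examine $A_{q'} = \{\, p \in P : q' \not\le \phi(p) \,\}$. This set cannot be cofinal for $P'$: otherwise $\phi(A_{q'})$ is cofinal for $Q'$, so some $p \in A_{q'}$ satisfies $q' \le \phi(p)$, a contradiction. Hence there is a witness $\psi(q') \in P'$ with $\psi(q') \not\le a$ for every $a \in A_{q'}$, i.e.\ $\psi(q') \le a$ implies $q' \le \phi(a)$ for all $a \in P$. Then if $U \subseteq Q'$ is unbounded in $Q$ while $\psi(U)$ has an upper bound $p$ in $P$, we would get $q' \le \phi(p)$ for every $q' \in U$, contradicting unboundedness; so $\psi : Q' \to P'$ is a relative Tukey map. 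For the converse, given a relative Tukey map $\psi : Q' \to P'$, note that for each $p \in P$ the set $B_p = \{\, q' \in Q' : \psi(q') \le p \,\}$ is bounded in $Q$ (else $\psi(B_p)$ is unbounded in $P$ although $p$ bounds it), so choose an upper bound $\phi(p)$ of $B_p$; if $C$ is cofinal for $P'$ and $q' \in Q'$, then $\psi(q') \le c$ for some $c \in C$, whence $q' \in B_c$ and $q' \le \phi(c) \in \phi(C)$, so $\phi$ is a relative Tukey quotient.

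For part~(2), first assertion: apply part~(1) to obtain a relative Tukey map $\psi : Q' \to P'$ and set $\phi(p) = \sup\{\, q' \in Q' : \psi(q') \le p \,\}$. The supremum exists because the set is bounded (as in part~(1)) and $Q$ is Dedekind complete; $\phi$ is order-preserving because $p_1 \le p_2$ only enlarges the set whose supremum is taken; and the computation from the converse of part~(1) shows $\phi$ is still a relative Tukey quotient, since that argument used only that $\phi(p)$ is an upper bound of $B_p$. For the second assertion I would simply chase definitions: if $\phi$ is order-preserving with $\phi(P')$ cofinal for $Q'$ and $C$ is cofinal for $P'$, then for $q' \in Q'$ pick $p' \in P'$ with $q' \le \phi(p')$ and then $c \in C$ with $p' \le c$, so monotonicity gives $q' \le \phi(p') \le \phi(c) \in \phi(C)$. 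Part~(3) then follows: the inclusion $C \hookrightarrow P$ is order-preserving with cofinal image $C$, so the second assertion of part~(2) yields $C \tr P$; conversely, choosing for each $p \in P$ some $\phi(p) \in C$ above $p$ defines a map $P \to C$ sending a set $D$ cofinal in $P$ to a set cofinal in $C$ (given $c \in C$, pick $d \in D$ with $c \le d \le \phi(d)$), so $P \tr C$, and $C$ and $P$ are Tukey equivalent.

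The step needing the most care is the bookkeeping in part~(1): one must keep straight throughout that ``cofinal'' and ``unbounded'' always refer to the ambient directed posets $P$ and $Q$, while the distinguished subsets $P'$ and $Q'$ govern only which elements the quotients and maps are tested on and land in. The one place to pause is the monotonization in part~(2) when $\{\, q' \in Q' : \psi(q') \le p \,\}$ is empty; but here Dedekind completeness also supplies a least element of $Q$ (the supremum of $\emptyset$), so this case causes no difficulty.
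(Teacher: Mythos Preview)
Your argument is correct and follows the standard Galois--Tukey duality, adapted cleanly to the relative setting. Note, however, that the paper itself does not prove this lemma: immediately before the statement it says that sketches appeared in \cite{KM} and full proofs can be found in \cite{Anas_thesis}, and no proof is given in the present paper. So there is no in-paper proof to compare against. That said, your proof is exactly the expected one: the ``not cofinal for $P'$'' witness in the forward direction of part~(1), and the ``$B_p$ bounded'' choice in the converse, are the standard moves, and your monotonization in part~(2) via $\phi(p)=\sup B_p$ is the usual way Dedekind completeness is used here. Your handling of the empty $B_p$ is fine under the paper's definition of Dedekind completeness (every bounded subset has a least upper bound), since the empty set is bounded in any nonempty $Q$ and its least upper bound is the minimum of $Q$.
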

Note that $\K(X)$ is Dedekind complete for any space $X$. Hence in this paper -- where our posets are always either a $\K(S)$, for some $S \subseteq \omega_1$, or a $\K(M)$ where $M$ is separable and metrizable -- we will routinely assume, by part (2) of the above lemma,  that any given Tukey quotient is order-preserving. Dedekind completeness can be useful in other ways.

\begin{lemma}\label{combine} For a Dedekind complete poset $P$, suppose $P = \bigcup_{\alpha \in \kappa} P_\alpha$ and for each $\alpha$ we have $Q \tr (P_\alpha ,P)$. Then $Q \times [\kappa]^{<\omega} \tr P$.
\end{lemma}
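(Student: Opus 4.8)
The plan is to build an explicit order-preserving Tukey quotient $\phi : Q \times [\kappa]^{<\omega} \to P$ and then invoke the second part of Lemma~\ref{trtm}(2). First, for each $\alpha \in \kappa$, since $Q \tr (P_\alpha, P)$ and $P$ is Dedekind complete, Lemma~\ref{trtm}(2) supplies an order-preserving map $\phi_\alpha : Q \to P$ whose image $\phi_\alpha(Q)$ is cofinal for $P_\alpha$ in $P$ (more is true, but only cofinality of the image will be used).

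Next I would set, for $q \in Q$ and nonempty $F \in [\kappa]^{<\omega}$,
\[
\phi(q, F) = \sup\{ \phi_\alpha(q) : \alpha \in F \},
\]
this supremum existing because $P$ is directed, so the finite set $\{\phi_\alpha(q) : \alpha \in F\}$ has an upper bound, and $P$ is Dedekind complete; for the empty set one fixes some $\alpha_0 \in \kappa$ and puts $\phi(q,\emptyset) = \phi_{\alpha_0}(q)$ (the case $\kappa = \emptyset$ being degenerate). That $\phi$ is order-preserving is a routine check: if $q \le q'$ and $F \subseteq F'$, then for each $\alpha \in F$ we have $\phi_\alpha(q) \le \phi_\alpha(q') \le \phi(q',F')$, so $\phi(q',F')$ bounds $\{\phi_\alpha(q):\alpha\in F\}$ and hence dominates its least upper bound $\phi(q,F)$.

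It then remains to verify that $\phi(Q \times [\kappa]^{<\omega})$ is cofinal in $P$; once that is done, the second part of Lemma~\ref{trtm}(2) (applied with $P' = Q \times [\kappa]^{<\omega}$ and $Q' = P$) immediately yields that $\phi$ is a relative Tukey quotient from $(Q\times[\kappa]^{<\omega},Q\times[\kappa]^{<\omega})$ to $(P,P)$, i.e.\ $Q \times [\kappa]^{<\omega} \tr P$. For the cofinality of the image: given $p \in P$, choose $\alpha$ with $p \in P_\alpha$ (possible since $P = \bigcup_{\alpha\in\kappa} P_\alpha$); since $\phi_\alpha(Q)$ is cofinal for $P_\alpha$, there is $q \in Q$ with $\phi_\alpha(q) \ge p$, and then $\phi(q, \{\alpha\}) = \phi_\alpha(q) \ge p$.

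I do not expect a real obstacle; the only delicate point is ensuring the supremum defining $\phi(q,F)$ exists, which is precisely where Dedekind completeness and directedness of $P$ enter, plus the harmless bookkeeping for the empty member of $[\kappa]^{<\omega}$. Alternatively, one could argue dually via Lemma~\ref{trtm}(1): picking for each $p$ some $\alpha(p)$ with $p \in P_{\alpha(p)}$, together with relative Tukey maps $\psi_\alpha : P_\alpha \to Q$, the map $p \mapsto (\psi_{\alpha(p)}(p), \{\alpha(p)\})$ carries $P$-unbounded sets to $(Q\times[\kappa]^{<\omega})$-unbounded sets, the key being that a finite union of bounded subsets of a directed poset is bounded.
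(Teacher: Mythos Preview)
Your proposal is correct and follows essentially the same route as the paper: fix order-preserving $\phi_\alpha : Q \to P$ with image cofinal for $P_\alpha$, set $\phi(q,F)=\sup\{\phi_\alpha(q):\alpha\in F\}$, and check that $\phi$ is order-preserving with cofinal image. Your extra bookkeeping for $F=\emptyset$ and the explicit appeal to Lemma~\ref{trtm}(2) are harmless refinements, and the dual argument you sketch at the end is a valid alternative.
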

\begin{proof}
As $P$ is Dedekind complete, for each $\alpha < \kappa$, fix an order-preserving $\phi_\alpha : Q \to P$ such that $\phi_\alpha (Q)$ is cofinal for $P_\alpha$ in $P$. Define $\phi : Q \times [\kappa]^{<\omega} \to P$ by $\phi(q,F) = \sup \{  \phi_\alpha (q) : \alpha \in F\}$, which is well-defined since $P$ is directed and Dedekind complete.

Then $\phi$ is clearly order-preserving. If $p$ is any element of $P$, then $p$ is in $P_\alpha$, for some $\alpha$. Pick $q$ from $Q$ such that $\phi_\alpha (q) \ge p$. Then $\phi(q,\{\alpha\}) = \phi_\alpha (q) \ge p$, and thus $\phi$ has cofinal image.
\end{proof}

\subsection{Cofinality and additivity} Define the \emph{cofinality} of $P'$ in $P$ to be $\cof{P',P} = \min \{ |C| : C$ is cofinal for $P'$ in $P\}$.
Define the \emph{additivity} of $P'$ in $P$ to be $\add{P,P'} = \min \{ |S| : S \subseteq P'$ and $S$ has no upper bound in $P\}$.
Then $\cof{P}=\cof{P,P}$ and $\add{P}=\add{P,P}$ coincide with the usual notions of cofinality and additivity of a poset.
\begin{lemma}\label{cof_add}
If $(P',P) \tr (Q',Q)$ then $\cof{P',P} \ge \cof{Q',Q}$, and $\add{P',P} \le \add{Q',Q}$.
\end{lemma}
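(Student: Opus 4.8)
The plan is to treat the two inequalities separately: the cofinality bound follows immediately from the definition of a relative Tukey quotient, while the additivity bound is most cleanly obtained by first passing to the dual notion supplied by Lemma~\ref{trtm}(1).

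For the cofinality statement, I would fix a relative Tukey quotient $\phi : P \to Q$ witnessing $(P',P) \tr (Q',Q)$, and let $C \subseteq P$ be a set cofinal for $P'$ in $P$ of minimal cardinality, so that $|C| = \cof{P',P}$. By the defining property of a relative Tukey quotient, the image $\phi(C)$ is cofinal for $Q'$ in $Q$, and trivially $|\phi(C)| \le |C|$. Hence $\cof{Q',Q} \le |\phi(C)| \le |C| = \cof{P',P}$, as required.

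For the additivity statement, I would instead invoke Lemma~\ref{trtm}(1) to obtain a relative Tukey map $\psi : Q' \to P'$ from $(Q',Q)$ to $(P',P)$; that is, a map carrying every subset of $Q'$ unbounded in $Q$ to a subset of $P'$ unbounded in $P$. Choosing $T \subseteq Q'$ with no upper bound in $Q$ and $|T| = \add{Q',Q}$, the set $\psi(T) \subseteq P'$ then has no upper bound in $P$, and $|\psi(T)| \le |T|$, so $\add{P',P} \le |\psi(T)| \le |T| = \add{Q',Q}$.

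Both steps are routine once the appropriate map is in hand; the only genuine content is recognizing that additivity must be attacked through the dual map $\psi$ — a cofinal-to-cofinal map $\phi : P \to Q$ cannot be used directly to transport unbounded subsets of $Q'$ back into $P'$ — and one should note the harmless degenerate case in which $P'$ is bounded in $P$, so that $\add{P',P}$ is $\infty$ by convention and the inequality holds vacuously.
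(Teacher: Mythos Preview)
Your argument is correct and is exactly the standard one; the paper itself omits the proof of this lemma entirely, referring the reader to \cite{KM} and \cite{Anas_thesis}, so there is nothing to compare against beyond noting that your approach is the expected one. One small remark: the degenerate case you flag is really on the $Q$ side --- if $Q'$ has no unbounded subset in $Q$ then $\add{Q',Q}$ is undefined (or $\infty$ by convention) and the inequality is vacuous; conversely if $P'$ is bounded in $P$ then any singleton bound is cofinal for $P'$, so its $\phi$-image bounds $Q'$ in $Q$ and both additivities are undefined together.
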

\begin{lemma}\label{cof_dom}
The cofinality of $P'$ in $P$ is $\leq \kappa$ if and only if $[\kappa]^{<\omega} \tr (P',P)$.
\end{lemma}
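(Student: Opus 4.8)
The statement to prove is Lemma~\ref{cof_dom}: the cofinality of $P'$ in $P$ is $\leq \kappa$ if and only if $[\kappa]^{<\omega} \tr (P',P)$.

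The plan is to prove the two directions separately, exhibiting an explicit relative Tukey quotient in each case. For the forward direction, suppose $\cof{P',P} \leq \kappa$, so there is a set $C = \{ c_\alpha : \alpha < \kappa \}$ that is cofinal for $P'$ in $P$. First I would define $\phi : [\kappa]^{<\omega} \to P$ by fixing some element $p_0 \in P$ (using directedness, or just any element) and setting $\phi(\emptyset) = p_0$ and, for nonempty $F$, letting $\phi(F)$ be an upper bound in $P$ of $\{ c_\alpha : \alpha \in F \}$ — such an upper bound exists since $P$ is directed and $F$ is finite. The claim is then that $\phi$ is a relative Tukey quotient from $[\kappa]^{<\omega}$ (i.e.\ the pair $([\kappa]^{<\omega}, [\kappa]^{<\omega})$) to $(P', P)$. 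To see this, let $\D \subseteq [\kappa]^{<\omega}$ be cofinal in $[\kappa]^{<\omega}$; then in particular every singleton $\{\alpha\}$ lies below some $F \in \D$, so $\{ \alpha \} \subseteq F$, whence $\phi(F) \geq c_\alpha$. Thus $\phi(\D)$ lies above every $c_\alpha$, and since $C$ is cofinal for $P'$ in $P$, $\phi(\D)$ is cofinal for $P'$ in $P$.

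For the reverse direction, suppose $[\kappa]^{<\omega} \tr (P', P)$, witnessed by $\phi : [\kappa]^{<\omega} \to P$. The natural candidate for a cofinal-for-$P'$ set of size $\leq \kappa$ is $C = \phi\big( \{ \{\alpha\} : \alpha < \kappa \} \big)$, or more robustly $C = \{ \phi(F) : F \in [\kappa]^{<\omega} \}$ — but this latter set has size at most $\kappa$ since $|[\kappa]^{<\omega}| = \kappa$ for infinite $\kappa$ (and is finite otherwise). I would argue that $C = \phi([\kappa]^{<\omega})$ is cofinal for $P'$ in $P$: since $[\kappa]^{<\omega}$ is cofinal in itself, $\phi([\kappa]^{<\omega})$ is cofinal for $P'$ in $P$ by definition of relative Tukey quotient. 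Hence $\cof{P',P} \leq |C| \leq \kappa$. (One should handle the trivial case $\kappa < \omega$ separately, or simply observe that $|[\kappa]^{<\omega}| \le \max(\kappa, \omega)$ and that the argument still delivers a cofinal set, whose size one then checks directly is $\le \kappa$ when $P'$ itself forces it, though really the clean statement is for infinite $\kappa$ and the finite case is immediate.)

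I do not anticipate a serious obstacle here; the lemma is essentially a reformulation of definitions, and the only point requiring a moment's care is the cardinality bookkeeping $|[\kappa]^{<\omega}| = \kappa$ for infinite $\kappa$, together with making sure $\phi$ is genuinely well-defined on the empty set and on singletons (which is immediate once $P$ is directed). If one wishes, the forward direction can alternatively be phrased via Lemma~\ref{trtm}(2) by noting that $\phi$ as defined above is order-preserving and $\phi([\kappa]^{<\omega})$ is cofinal for $P'$, but the direct verification is just as short.
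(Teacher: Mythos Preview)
Your proposal is correct and is the standard argument. Note, however, that the paper does not actually give a proof of this lemma: it is stated without proof, with the remark that sketches appear in \cite{KM} and full proofs in \cite{Anas_thesis}. So there is nothing to compare your approach against in the paper itself; your argument is exactly the kind of direct verification one would expect to find in those references.

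One minor wording slip: when you write ``$\phi(\D)$ lies above every $c_\alpha$'' you mean that for each $c_\alpha$ some element of $\phi(\D)$ lies above it, not that every element of $\phi(\D)$ does; but the intended meaning is clear from context and the argument is sound. Your handling of the finite-$\kappa$ edge case is a bit hand-wavy, but since the lemma is only ever applied for infinite $\kappa$ in the paper this is harmless.
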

%



\subsection{Calibres and Spectra.} Let $\kappa \ge \mu \ge \lambda$ be cardinals. We say that $P'$ has \emph{calibre $(\kappa,\lambda,\mu)$} in $P$ (or, $(P', P)$ has caliber $(\kappa,\lambda,\mu)$) if for every $\kappa$-sized subset $T$ of $P'$ there is a $\lambda$-sized subset $T_0$ such that every $\mu$-sized subset $T_1$ of $T_0$ has an upper bound in $P$. When $P'=P$ this coincides with the standard definition of calibre of a poset. `Calibre $(\kappa, \lambda, \lambda)$' is abbreviated to `calibre $(\kappa, \lambda)$' and `calibre $(\kappa, \kappa)$' is abbreviated to `calibre $\kappa$'. 
\begin{lemma}\label{calibregoesdown}
If $(P',P) \tr (Q',Q)$, $P'$ has calibre $(\kappa,\lambda,\mu)$ in $P$ and $\kappa$ is regular, then $Q'$ has calibre $(\kappa,\lambda,\mu)$ in $Q$.
\end{lemma}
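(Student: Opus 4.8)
The plan is to dualize. By Lemma~\ref{trtm}(1), the relation $(P',P) \tr (Q',Q)$ yields a relative Tukey map $\psi : Q' \to P'$, that is, a map such that $\psi(U)$ is unbounded in $P$ whenever $U \subseteq Q'$ is unbounded in $Q$; equivalently, by contraposition, if $\psi(U)$ has an upper bound in $P$ then $U$ has an upper bound in $Q$. This dual map is the only use I will make of the hypothesis $(P',P)\tr(Q',Q)$.

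Now fix $T \subseteq Q'$ with $|T| = \kappa$; the goal is to find $T_0 \subseteq T$ with $|T_0| = \lambda$ all of whose $\mu$-sized subsets are bounded in $Q$. I would consider the fibres of $\psi \restrict T$ and split into two cases. \emph{Case 1: some fibre $\psi^{-1}(p) \cap T$ has size $\kappa$.} Since $\lambda \le \kappa$, choose $T_0 \subseteq \psi^{-1}(p) \cap T$ with $|T_0| = \lambda$; then for any $T_1 \subseteq T_0$ we have $\psi(T_1) \subseteq \{p\}$, which is bounded in $P$, so $T_1$ is bounded in $Q$ by the dual map property. \emph{Case 2: every fibre of $\psi \restrict T$ has size $< \kappa$.} Since $\kappa$ is regular and $|T| = \kappa$, this forces $|\psi(T)| = \kappa$. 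Apply the calibre $(\kappa,\lambda,\mu)$ property of $P'$ in $P$ to the $\kappa$-sized set $\psi(T) \subseteq P'$ to obtain $S_0 \subseteq \psi(T)$ with $|S_0| = \lambda$ such that every $\mu$-sized subset of $S_0$ has an upper bound in $P$. For each $s \in S_0$ pick $t_s \in T$ with $\psi(t_s) = s$, and set $T_0 = \{t_s : s \in S_0\}$. Then $\psi \restrict T_0$ is injective, so $|T_0| = \lambda$, and $\psi$ maps any $\mu$-sized $T_1 \subseteq T_0$ onto a $\mu$-sized subset of $S_0$, which is bounded in $P$; hence $T_1$ is bounded in $Q$. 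In either case $T_0$ is the required witness.

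The argument is almost entirely bookkeeping, so I anticipate no serious obstacle; the one point that actually requires thought is the fibre dichotomy, which is exactly where regularity of $\kappa$ enters — without it one could have $|\psi(T)| < \kappa$ with all fibres small, and then neither the calibre hypothesis (which needs a $\kappa$-sized subset of $P'$) nor the single-fibre argument of Case~1 would apply.
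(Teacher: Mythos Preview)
Your proof is correct. The paper does not actually include a proof of this lemma --- it is stated among several basic results whose proofs are explicitly omitted with a reference to \cite{KM} and \cite{Anas_thesis} --- so there is nothing to compare against directly. Your dualization via Lemma~\ref{trtm}(1) and the fibre dichotomy is the standard argument, and your observation that regularity of $\kappa$ is needed precisely to force $|\psi(T)|=\kappa$ in Case~2 is exactly right.
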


\begin{lemma}\label{caltuk}
Suppose $\kappa$ is regular. Then (1) $P'$ fails to have calibre $\kappa$ in $P$ if and only if $(P',P)\tr \kappa$, (2) If $(P',P)\tr [\kappa]^{< \lambda}$ then $P'$ fails to have calibre $(\kappa, \lambda)$ and the converse is true if $\add{P'} \geq \lambda$ (equivalently, all subsets of $P'$ of size $<\lambda$ are bounded in $P'$). 
\end{lemma}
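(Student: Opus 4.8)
The plan is to pass to the dual picture (relative Tukey maps, via Lemma~\ref{trtm}(1)) throughout, and to first record a convenient reformulation of failure of calibre. For regular $\kappa$ and $\kappa\ge\lambda$, I claim $P'$ fails to have calibre $(\kappa,\lambda)$ in $P$ if and only if there is a $\kappa$-sized $T\subseteq P'$ no $\lambda$-sized subset of which has an upper bound in $P$ (and, with $\lambda=\kappa$, the analogous statement for calibre $\kappa$). One direction is trivial ($T_0$ is itself a $\lambda$-sized subset of $T_0$), and the other holds because a superset of an unbounded set is unbounded, so from a $T$ witnessing failure of calibre every $\lambda$-sized subset of $T$ is already unbounded. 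This reformulation uses neither cardinal arithmetic nor additivity; with it in hand both parts become short.

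For part (1): if $P'$ fails calibre $\kappa$, fix $T=\set{p_\alpha:\alpha<\kappa}\subseteq P'$ with the $p_\alpha$ distinct and no $\kappa$-sized bounded subset, and set $\psi(\alpha)=p_\alpha$. A subset of the ordinal $\kappa$ is unbounded iff cofinal iff (by regularity) of size $\kappa$, so its $\psi$-image is a $\kappa$-sized subset of $T$, hence unbounded in $P$; thus $\psi$ is a relative Tukey map from $(\kappa,\kappa)$ to $(P',P)$ and Lemma~\ref{trtm}(1) gives $(P',P)\tr\kappa$. Conversely, from a relative Tukey map $\psi:\kappa\to P'$ witnessing $(P',P)\tr\kappa$ I would take $T=\psi(\kappa)$: regularity forces $|T|=\kappa$ (otherwise $\kappa$ splits into fewer than $\kappa$ fibres, one of which has size $\kappa$, hence is cofinal, forcing a singleton image to be unbounded), and for $\kappa$-sized $S\subseteq T$ the set $\psi^{-1}(S)$ has size $\kappa$, hence is cofinal in $\kappa$, so $S=\psi(\psi^{-1}(S))$ is unbounded in $P$; thus $T$ witnesses failure of calibre $\kappa$.

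For part (2), the only fact about the ambient poset I will use is that $U\subseteq[\kappa]^{<\lambda}$ is bounded iff $|\bigcup U|<\lambda$ (its union is the least upper bound when it lies in the poset). For the forward implication, apply Lemma~\ref{trtm}(1) to get a relative Tukey map $\psi:[\kappa]^{<\lambda}\to P'$. For $y\subseteq\kappa$ with $|y|=\lambda$ the family $\set{\set{\alpha}:\alpha\in y}$ is unbounded in $[\kappa]^{<\lambda}$, so $\set{\psi(\set{\alpha}):\alpha\in y}$ is unbounded in $P$; the same fibre-counting argument as in part (1) shows $\set{\psi(\set{\alpha}):\alpha<\kappa}$ has size $\kappa$, and after thinning to a $\kappa$-sized $x^*\subseteq\kappa$ on which $\alpha\mapsto\psi(\set{\alpha})$ is injective, $T:=\set{\psi(\set{\alpha}):\alpha\in x^*}$ is a $\kappa$-sized subset of $P'$ every $\lambda$-sized subset of which is unbounded in $P$, i.e.\ $P'$ fails calibre $(\kappa,\lambda)$ (no additivity needed). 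For the converse, assuming $\add{P'}\ge\lambda$ (all $<\lambda$-sized subsets of $P'$ have upper bounds in $P'$), fix via the reformulation a $\kappa$-sized $T=\set{p_\alpha:\alpha<\kappa}\subseteq P'$ with no $\lambda$-sized bounded subset; since $|y|<\lambda$ implies $\set{p_\alpha:\alpha\in y}$ has an upper bound in $P'$, define $\psi(y)$ to be one such. If $U$ is unbounded in $[\kappa]^{<\lambda}$, pick $y\subseteq\bigcup U$ with $|y|=\lambda$ and $u_\alpha\in U$ with $\alpha\in u_\alpha$ for $\alpha\in y$; an upper bound of $\psi(U)$ would bound each $\psi(u_\alpha)\ge p_\alpha$, hence bound the $\lambda$-sized set $\set{p_\alpha:\alpha\in y}\subseteq T$ — impossible. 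So $\psi(U)$ is unbounded and Lemma~\ref{trtm}(1) yields $(P',P)\tr[\kappa]^{<\lambda}$.

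There is no deep obstacle — the argument is bookkeeping on top of Lemma~\ref{trtm} — but the point needing care is the exact role of $\add{P'}\ge\lambda$: it is used once only, to guarantee that each $<\lambda$-small piece $\set{p_\alpha:\alpha\in y}$ has an upper bound inside $P'$, so that $\psi$ in the converse of (2) actually maps into $P'$; I would double-check that this is the sole use, so that the forward implication of (2) and both directions of (1) remain unconditional. The secondary routine point is the repeated invocation of regularity of $\kappa$ in the fibre arguments that force $|T|=\kappa$.
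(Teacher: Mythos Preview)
Your proof is correct and follows essentially the same approach as the paper's. The only difference is cosmetic: for the forward implications (that $(P',P)\tr\kappa$, respectively $(P',P)\tr[\kappa]^{<\lambda}$, forces failure of the corresponding calibre) the paper simply observes that $\kappa$ and $[\kappa]^{<\lambda}$ themselves fail the relevant calibre and invokes Lemma~\ref{calibregoesdown}, whereas you unpack this directly via the dual Tukey map and a fibre-counting argument; for the converses both proofs construct the identical Tukey maps $\alpha\mapsto p_\alpha$ and $F\mapsto(\text{upper bound of }\{p_\alpha:\alpha\in F\})$.
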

\begin{proof}
Clearly, $\kappa$ does not have calibre $\kappa$. So, by Lemma~\ref{calibregoesdown}, $(P',P)\tr \kappa$ implies that $P'$ does not have calibre $\kappa$ in $P$. 

Similarly, $[\kappa]^{< \lambda}$ does not have calibre $(\kappa, \lambda)$ ($\{ \{\alpha\} : \alpha < \kappa \}$ is a $\kappa$-sized collection in $[\kappa]^{< \lambda}$ but none of its $\lambda$-sized subcollections is bounded in $[\kappa]^{< \lambda}$) and therefore $(P',P)\tr [\kappa]^{< \lambda}$ implies that $P'$ fails to have calibre $(\kappa, \lambda)$ in $P$. 

On the other hand, suppose $P'$ fails to have calibre $\kappa$ in $P$. Then there exists $\kappa$-sized $P_0 \subseteq P'$ such that all $\kappa$-sized subsets of $P_0$ are unbounded. Let $\psi : \kappa \to P_0\subseteq P$ be a bijection. Since $\kappa$ is regular, all unbounded subsets of $\kappa$ are $\kappa$-sized and their images are unbounded as well.  Therefore $\phi$ is a relative Tukey map. 

Similarly, suppose $P'$ fails to have calibre $(\kappa, \lambda)$ in $P$. Then there exists $\kappa$-sized $P_1 \subseteq P'$ such that all $\lambda$-sized subsets of $P_1$ are unbounded in $P$. Let $j : \kappa \to P_1\subseteq P$ be a bijection. Since $\add{P'} \geq \lambda$ we can define $\psi :  [\kappa]^{< \lambda} \to P'$ by $\psi(F) = $ an upper bound of $\{ j(\alpha): \alpha \in F  \}$ in $P'$.  Suppose $U$ is an unbounded subset of $[\kappa]^{< \lambda}$. This means that $\bigcup U$ has size $\geq \lambda$ and therefore $\{ j(\alpha) : \alpha \in \bigcup U  \}$, a subset of $P_1$ of size $\geq \lambda$,  is also unbounded in $P$. Since any bound of $\{ \psi(F) : F\in U\}$ is also a bound of $\{ j(\alpha) : \alpha \in \bigcup U  \}$ in $P$, we get that $\{\psi(F) : F\in U \}$ is unbounded and $\psi$ is a relative Tukey map. 
\end{proof}

The next lemma gives relative versions of known facts on productivity of calibres. The straightforward proof is left to the reader. 

\begin{lemma}\label{cal_product2}
(1) Both $(P',P)$ and $(Q',Q)$ have calibre $(\kappa, \kappa, \mu)$  if and only if $(P'\times Q',P\times Q)$ has calibre $(\kappa, \kappa, \mu)$. 

(2) If $P'$ (or $Q'$) fails to have calibre $(\kappa, \lambda, \mu)$ in $P$ (respectively, in $Q$) then $P'\times Q'$ also fails to have calibre $(\kappa, \lambda, \mu)$ in $P \times Q$. 
\end{lemma}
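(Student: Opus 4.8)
The plan is to establish part (2) first, deduce one direction of part (1) from it by contraposition, and then prove the remaining (and substantive) direction of (1) by the standard ``productivity of calibres'' argument, relativized to the pairs.

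For (2), suppose $P'$ fails to have calibre $(\kappa,\lambda,\mu)$ in $P$, and fix a $\kappa$-sized $T\subseteq P'$ witnessing this: every $\lambda$-sized $T_0\subseteq T$ contains a $\mu$-sized $T_1$ with no upper bound in $P$. Fix any $q_0\in Q'$ and set $\widehat T=T\times\{q_0\}\subseteq P'\times Q'$, a set of size $\kappa$. A $\lambda$-sized subset of $\widehat T$ has the form $T_0\times\{q_0\}$ with $|T_0|=\lambda$; taking the associated unbounded $\mu$-sized $T_1\subseteq T_0$, the set $T_1\times\{q_0\}$ is a $\mu$-sized subset of $T_0\times\{q_0\}$ with no upper bound in $P\times Q$, since the first coordinate of any such bound would be an upper bound of $T_1$ in $P$. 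Hence $P'\times Q'$ fails calibre $(\kappa,\lambda,\mu)$ in $P\times Q$; the case in which $Q'$ is the offending factor is symmetric. Specializing to $\lambda=\kappa$ now yields the ``if'' direction of (1): if either $(P',P)$ or $(Q',Q)$ fails calibre $(\kappa,\kappa,\mu)$, then so does $(P'\times Q',P\times Q)$.

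For the ``only if'' direction of (1) I would assume $(P',P)$ and $(Q',Q)$ both have calibre $(\kappa,\kappa,\mu)$, take a $\kappa$-sized $T\subseteq P'\times Q'$, and write $T=\{(p_\alpha,q_\alpha):\alpha<\kappa\}$ with $\alpha\mapsto(p_\alpha,q_\alpha)$ injective. Split according to the size of $A=\{p_\alpha:\alpha<\kappa\}$. If $|A|<\kappa$, then some value $p$ is attained on a $\kappa$-sized index set $S$; since the pairs are distinct $\{q_\alpha:\alpha\in S\}$ has size $\kappa$, so applying the calibre of $(Q',Q)$ to it gives a $\kappa$-sized $S'\subseteq S$ every $\mu$-sized subset of which is bounded in $Q$, and $T_0=\{(p_\alpha,q_\alpha):\alpha\in S'\}$ is then as required: a $\mu$-sized subset has constant first coordinate and bounded second coordinate. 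If $|A|=\kappa$, fix a $\kappa$-sized $B\subseteq\kappa$ on which $\alpha\mapsto p_\alpha$ is injective with image $A$, apply the calibre of $(P',P)$ to $A$ to obtain a $\kappa$-sized $A_0\subseteq A$ all of whose $\mu$-sized subsets are bounded in $P$, let $B_0=\{\alpha\in B:p_\alpha\in A_0\}$, and rerun the same dichotomy on $\{q_\alpha:\alpha\in B_0\}$: either it is constant on a $\kappa$-sized $B_1\subseteq B_0$, or the calibre of $(Q',Q)$ supplies a $\kappa$-sized $B_1\subseteq B_0$ with $\{q_\alpha:\alpha\in B_1\}$ having all $\mu$-sized subsets bounded in $Q$. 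In both cases $T_0=\{(p_\alpha,q_\alpha):\alpha\in B_1\}$ has size $\kappa$ and every $\mu$-sized subset of it has both projections bounded, hence an upper bound in $P\times Q$.

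I expect the only point needing care is the bookkeeping in this last step: the index sets must be kept nested, $B_1\subseteq B_0\subseteq B$, so that injectivity of $\alpha\mapsto p_\alpha$ persists and the projections of the chosen $\mu$-sized subsets genuinely land inside the $\kappa$-sized ``linked'' sets returned by the factor calibres; and regularity of $\kappa$ is used at each pigeonhole step (splitting a $\kappa$-sized set into fewer than $\kappa$ pieces). If either $P'$ or $Q'$ contributes fewer than $\kappa$ relevant points the statements are vacuous, so there is no loss in assuming otherwise.
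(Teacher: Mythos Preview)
The paper does not actually prove this lemma: it states that ``the straightforward proof is left to the reader.'' Your proposal supplies exactly the standard argument the authors have in mind, so there is nothing in the paper to compare against beyond noting that your write-up is consistent with their description of the proof as routine.

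Your argument is correct. Part~(2) is immediate via the embedding $p\mapsto(p,q_0)$, and your contrapositive derivation of the ``if'' direction of (1) is fine. For the ``only if'' direction your case split on the size of the first projection, followed by the same dichotomy on the second coordinate, is the expected approach; the bookkeeping you flag (keeping $B_1\subseteq B_0\subseteq B$ so that injectivity of the coordinate maps is preserved and the $\mu$-sized projections genuinely land inside the ``linked'' sets) is the only delicate point, and you handle it correctly. Your observation that regularity of $\kappa$ is needed for the pigeonhole steps is also right; the paper does not state this hypothesis in the lemma, but every application in the paper is to regular $\kappa$ (cf.\ Lemmas~\ref{calibregoesdown} and~\ref{caltuk}), and productivity of calibre $(\kappa,\kappa,\mu)$ can indeed fail for singular $\kappa$, so the hypothesis is implicit.
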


It is clearly of interest to consider the collection of all regular cardinals $\kappa$ such that a  poset $P$ has calibre $\kappa$, but since any regular cardinal $\kappa > |P|$ is a calibre of $\kappa$ this collection is unbounded. Consequently, we instead   study the \emph{spectrum} of $P$, $\spec{P}$,  the set of all regular $\kappa$ which are \emph{not} calibres of $\kappa$. Equivalently, $\spec{P}=\{ \kappa : \kappa \ \text{is a regular cardinal and} \ P\tr \kappa  \}$. Since calibre $\kappa$ is productive we immediately get the following corollary. 
\begin{cor}\label{prod_spec}
$\spec{P_1 \times P_2} = \spec{P_1} \cup \spec{P_2}$.
\end{cor}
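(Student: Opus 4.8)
The corollary is an immediate consequence of the productivity of calibre $\kappa$, which is the content of Lemma~\ref{cal_product2}(1) in the special case $P' = P$, $Q' = Q$ and $\mu = \lambda = \kappa$. I would first recall the reformulation $\spec{P} = \{ \kappa : \kappa \text{ regular}, \ P \text{ fails to have calibre } \kappa \}$ given just before the statement, so that the whole problem is translated into a statement about calibres rather than about Tukey quotients. Then both inclusions become bookkeeping.

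For $\spec{P_1} \cup \spec{P_2} \subseteq \spec{P_1 \times P_2}$, I would give the direct argument: the coordinate projections $\pi_i : P_1 \times P_2 \to P_i$ carry cofinal sets to cofinal sets (given $p_i$, pad it out with an arbitrary element of the other factor and hit the padded pair by a member of the cofinal set), so $P_1 \times P_2 \tr P_i$ for $i = 1, 2$. If $\kappa \in \spec{P_i}$ then, by Lemma~\ref{caltuk}(1), $P_i \tr \kappa$, and composing Tukey quotients gives $P_1 \times P_2 \tr P_i \tr \kappa$; applying Lemma~\ref{caltuk}(1) once more yields $\kappa \in \spec{P_1 \times P_2}$. (Alternatively this direction also follows from Lemma~\ref{cal_product2}(2).)

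For the reverse inclusion $\spec{P_1 \times P_2} \subseteq \spec{P_1} \cup \spec{P_2}$, I would argue by contraposition: if $\kappa \notin \spec{P_1} \cup \spec{P_2}$ then $\kappa$ is regular and both $P_1$ and $P_2$ have calibre $\kappa$ (equivalently, calibre $(\kappa,\kappa,\kappa)$), so by the forward direction of Lemma~\ref{cal_product2}(1) the product $P_1 \times P_2$ has calibre $\kappa$, i.e.\ $\kappa \notin \spec{P_1 \times P_2}$. Combining the two inclusions finishes the proof.

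There is essentially no obstacle here: the only thing to be a little careful about is that $\spec{\cdot}$ ranges over regular cardinals only, but regularity of $\kappa$ is preserved throughout (it is simply a hypothesis that travels along), and Lemma~\ref{cal_product2}(1) is stated as a biconditional so no regularity hypothesis is even needed to invoke it. The entire content of the corollary is packaged in that lemma, so the write-up will be short.
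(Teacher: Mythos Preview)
Your proposal is correct and matches the paper's approach exactly: the paper simply says ``Since calibre $\kappa$ is productive we immediately get the following corollary,'' invoking the biconditional of Lemma~\ref{cal_product2}(1) just as you do. Your alternative argument for the inclusion $\spec{P_i} \subseteq \spec{P_1 \times P_2}$ via the projection Tukey quotients is a valid (and slightly more explicit) variant, but the core idea is identical.
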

And transitivity of $\leq_T$ yields:
\begin{lemma}\label{specNN_in_all}
If $Q\leq_T P$ then $\spec{Q} \subseteq \spec{P}$.
\end{lemma}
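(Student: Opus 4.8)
The plan is to use the equivalent description of the spectrum noted just above the statement, namely $\spec{P} = \{\kappa : \kappa \text{ is a regular cardinal and } P \tr \kappa\}$, together with the transitivity of the Tukey order. First I would fix a regular cardinal $\kappa \in \spec{Q}$, so that $Q \tr \kappa$. Since $Q \leq_T P$ is by definition the same as $P \tr Q$, and the composite of two maps each carrying cofinal sets to cofinal sets again carries cofinal sets to cofinal sets, composing a Tukey quotient $P \to Q$ with a Tukey quotient $Q \to \kappa$ produces a Tukey quotient $P \to \kappa$. Hence $P \tr \kappa$, i.e. $\kappa \in \spec{P}$. As $\kappa \in \spec{Q}$ was arbitrary, $\spec{Q} \subseteq \spec{P}$.

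There is essentially no obstacle here: the only points to check are that the composite of two Tukey quotients is a Tukey quotient (immediate from the definitions) and that regularity of $\kappa$ is preserved, which is trivial since regularity is a property of $\kappa$ alone and $\kappa$ is never changed. Alternatively, one could argue by contraposition straight from the calibre definition of $\spec{}$: if $\kappa \notin \spec{P}$ then $P$ has calibre $\kappa$, and since $\kappa$ is regular, Lemma~\ref{calibregoesdown} applied with $P' = P$, $Q' = Q$ and $\lambda = \mu = \kappa$ gives that $Q$ has calibre $\kappa$, whence $\kappa \notin \spec{Q}$; taking contrapositives yields the inclusion. I would present the first (direct) argument as the proof, since it is the shortest.
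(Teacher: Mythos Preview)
Your proposal is correct and matches the paper's approach exactly: the paper simply remarks ``And transitivity of $\leq_T$ yields'' before stating the lemma, which is precisely your direct argument via the characterization $\spec{P}=\{\kappa:\kappa\text{ regular and }P\tr\kappa\}$. Your alternative via Lemma~\ref{calibregoesdown} is also valid but, as you note, the transitivity argument is the intended one.
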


Lastly we present a lemma that establishes a close relationship between the spectrum of a poset and its additivity and cofinality. (We write $[\lambda,\mu]^r$ for $\{\kappa : \kappa \text{ is regular and } \lambda \le \kappa \le \mu\}$.) 

\begin{lemma}\label{add_cof_spec}
Let $P$ be a directed poset without the largest element. Then \\
$\add{P}$ and $\cof{\cof{P}}$ are elements of $\spec{P}$ and $\spec{P} \subseteq [\add{P}, \cof{P}]^r$.
\end{lemma}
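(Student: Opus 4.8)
The statement has three parts: $\add{P}\in\spec{P}$, $\cof{\cof{P}}\in\spec{P}$, and $\spec{P}\subseteq[\add{P},\cof{P}]^r$. Throughout, recall that $\kappa\in\spec{P}$ means exactly that $\kappa$ is a regular cardinal with $P\tr\kappa$. Since $P$ is directed, every finite subset of $P$ has an upper bound, and since $P$ has no largest element, $P$ is itself unbounded and no minimum-size cofinal set has an upper bound; hence $\add{P}$ and $\cof{P}$ are well-defined with $\omega\le\add{P}\le\cof{P}$. The third part is then immediate from Lemma~\ref{cof_add}: if $\kappa\in\spec{P}$ then $(P,P)\tr(\kappa,\kappa)$, so $\cof{P}\ge\cof{\kappa}$ and $\add{P}\le\add{\kappa}$; as $\kappa$ is a regular infinite cardinal the linear order $\kappa$ has additivity and cofinality both equal to $\kappa$, so $\add{P}\le\kappa\le\cof{P}$. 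The plan for the two membership statements is to produce, in each case, a relative Tukey map from the relevant regular cardinal $\kappa$ into $P$ and invoke Lemma~\ref{trtm}(1) to conclude $P\tr\kappa$.

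For $\add{P}\in\spec{P}$, I would first record the standard fact that $\lambda:=\add{P}$ is a regular cardinal: were it singular, one could split a minimum-size unbounded set $S$ into $\cof{\lambda}<\lambda$ pieces of size $<\lambda$, bound each piece by minimality of $\lambda$, and then bound the fewer than $\lambda$ many resulting bounds, contradicting unboundedness of $S$. Now enumerate $S=\{s_\alpha:\alpha<\lambda\}$ and define $\psi:\lambda\to P$ by recursion, taking $\psi(\alpha)$ to be an upper bound of $\{s_\alpha\}\cup\{\psi(\beta):\beta<\alpha\}$; this set has size $<\lambda=\add{P}$, hence is bounded. Then $\psi$ is monotone and $\psi(\alpha)\ge s_\alpha$. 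If $A\subseteq\lambda$ is unbounded but $\psi[A]$ had an upper bound $b$, then for each $\gamma<\lambda$ some $\alpha\in A$ has $\alpha>\gamma$, whence $b\ge\psi(\alpha)\ge\psi(\gamma)\ge s_\gamma$; so $b$ would bound $S$, a contradiction. Thus $\psi$ is a relative Tukey map $(\lambda,\lambda)\to(P,P)$, so $P\tr\lambda$, and since $\lambda$ is regular, $\lambda=\add{P}\in\spec{P}$.

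For $\cof{\cof{P}}\in\spec{P}$, set $\mu=\cof{P}$ and $\theta=\cof{\mu}$, which is a regular cardinal. Fix a cofinal set $C=\{c_i:i<\mu\}$ of minimum size $\mu$ and an increasing sequence $\langle\mu_\xi:\xi<\theta\rangle$ cofinal in $\mu$. For each $\xi$ the initial segment $\{c_i:i<\mu_\xi\}$ has size $<\mu=\cof{P}$, hence is not cofinal, so I may choose $p_\xi\in P$ with $p_\xi\not\le c_i$ for all $i<\mu_\xi$. Then $\xi\mapsto p_\xi$ is a relative Tukey map $(\theta,\theta)\to(P,P)$: if $A\subseteq\theta$ is unbounded and $q$ were an upper bound of $\{p_\xi:\xi\in A\}$, choose $j<\mu$ with $q\le c_j$ (possible since $C$ is cofinal), and then $\xi\in A$ with $\mu_\xi>j$ (possible since $\{\xi:\mu_\xi\le j\}$ is a proper, hence bounded, initial segment of $\theta$); then $p_\xi\le q\le c_j$ with $j<\mu_\xi$, contradicting the choice of $p_\xi$. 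Hence $P\tr\theta$ and $\cof{\cof{P}}=\theta\in\spec{P}$.

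The one step that is not mere unwinding of definitions is the combinatorial core of the last paragraph: the observation that \emph{proper} initial segments of a minimum-size cofinal set cannot themselves be cofinal, and that ``escape points'' for those segments, indexed along a cofinal sequence in $\cof{P}$, patch together into a Tukey map out of $\cof{\cof{P}}$. As a consistency check, the last two parts together force $\add{P}\le\cof{\cof{P}}$; for instance, if $\cof{P}=\aleph_\omega$ then necessarily $\aleph_0\in\spec{P}$, i.e.\ $P$ fails to be $\sigma$-directed.
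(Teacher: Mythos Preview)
Your proof is correct in all three parts. The paper itself does not supply a proof of this lemma: it is one of the background results for which the authors explicitly say that sketches appear in \cite{KM} and full proofs in \cite{Anas_thesis}, so there is nothing in the present paper to compare your argument against. Your approach---invoking Lemma~\ref{cof_add} for the inclusion $\spec{P}\subseteq[\add{P},\cof{P}]^r$, and building explicit Tukey maps (via Lemma~\ref{trtm}(1)) for the two membership claims---is the standard one and is cleanly executed.
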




\subsection{The Pair $(X, \K(X))$} 
If we identify elements of $X$ with corresponding singletons we can view $X$ as a subset of $\K(X)$. Most relative Tukey pairs  considered in this paper are  of the form $(X, \K(X))$. Note that a subset of $\K(X)$ is cofinal for $X$ in $\K(X)$ if it covers $X$. So $(X, \K(X)) \tr (Y, \K(Y))$ means that there is an order-preserving map from $\K(X)$ to $\K(Y)$ that takes (compact) covers of $X$ to (compact) covers of $Y$.  While $\K(X) \tr (Y, \K(Y))$ means that there is an order-preserving map from $\K(X)$ to $\K(Y)$ that takes cofinal subsets of $\K(X)$ to covers of $Y$. The next lemma is particularly useful.
\begin{lemma}\label{closed}
Let $A$ be a closed subspace of a space $X$. Then $(X, \K(X)) \tr (A,\K(A))$, $\K(X) \tr \K(A)$, and $\K(X) \tr (A,\K(A))$.
\end{lemma}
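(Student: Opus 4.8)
The plan is to handle all three assertions with the single map $\phi : \K(X) \to \K(A)$ defined by $\phi(K) = K \cap A$. Since $A$ is closed in $X$, the set $K \cap A$ is a closed subset of the compact set $K$, hence compact, so $\phi$ is well defined; and $\phi$ is clearly order-preserving, since $K \subseteq L$ implies $K \cap A \subseteq L \cap A$. The closedness of $A$ is used precisely at this point --- it is what guarantees $\phi$ actually lands in $\K(A)$ --- and this is really the only place where any care is needed. By the second (converse) half of Lemma~\ref{trtm}(2), to prove each relation $(P',P) \tr (Q',Q)$ with this $\phi$ it then suffices to check that $\phi(P')$ is cofinal for $Q'$ in $Q$.

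First observe that if $L \in \K(A)$ then $L$ is also a compact subset of $X$ and $\phi(L) = L \cap A = L$; hence $\phi(\K(X)) \supseteq \K(A)$, which is cofinal in $\K(A)$. Applying Lemma~\ref{trtm}(2) with $P' = P = \K(X)$ and $Q = \K(A)$, taking first $Q' = \K(A)$ and then $Q' = A$ (noting that any set cofinal in $\K(A)$, in particular $\K(A)$ itself, covers $A$), we obtain both $\K(X) \tr \K(A)$ and $\K(X) \tr (A, \K(A))$.

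Finally, for $(X, \K(X)) \tr (A, \K(A))$, identify points of $X$ with the corresponding singletons. For $x \in X$ we have $\phi(\{x\}) = \{x\} \cap A$, which equals $\{x\}$ when $x \in A$; thus $\phi(X) \supseteq \{\{a\} : a \in A\}$, a family that covers $A$ and so is cofinal for $A$ in $\K(A)$. Lemma~\ref{trtm}(2), now with $P' = X$, $P = \K(X)$, $Q' = A$, $Q = \K(A)$, gives the claim. (Alternatively one can verify the cofinality conditions directly: $\phi$ carries a cover of $X$ to a cover of $A$, and carries a cofinal family in $\K(X)$ to a cofinal family in $\K(A)$, since any compact $L \subseteq A$ lies inside some member $K$ of the family and then $L = L \cap A \subseteq \phi(K)$; but routing everything through Lemma~\ref{trtm}(2) avoids the repetition.) There is no substantive obstacle in this proof.
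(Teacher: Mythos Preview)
Your proof is correct; the map $\phi(K)=K\cap A$ is exactly the natural one, and your appeals to the converse part of Lemma~\ref{trtm}(2) are valid in each of the three cases. The paper itself omits the proof of this lemma (referring to \cite{KM} and \cite{Anas_thesis}), but the standard argument is precisely the one you give.
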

For spaces $X$ and $Y$, let $X\oplus Y$ denote the free union of $X$ and $Y$. Then clearly, $\K(X\oplus Y) = \K(X) \times \K(Y)$.

\subsection{Notation} 
For the rest of this paper $S$ will always denote a subset of $\omega_1$ and $M$ will denote a separable metrizable space. Let $\SK$ be the set of all homeomorphism classes of subsets of $\omega_1$. Let $\M$ be the set of all homeomorphism classes of separable metrizable spaces. Let $\K(\SK) = \{ [\K(S)]_T: S\in \SK \}$ and $\K(\M) = \{ [\K(M)]_T: M\in \M \}$.

\subsection{Separable Metrizable Spaces}  As mentioned above, each bounded subset of $\omega_1$ is Polish (because it is countable, scattered and metrizable). Therefore we briefly describe the Tukey classes of Polish spaces.  In \cite{Chris} Christensen proved that, for a separable metrizable space $M$, $\omega^\omega \tr \K(M)$ if and only if $M$ is Polish. Based on this result one can obtain the following description of the initial segment of Tukey classes in $\K(\M)$.  

\begin{theorem}[Christensen, Fremlin \cite{Fr2}]\label{M_initial}
Below $M$ is a separable metrizable space.

(1) The minimum Tukey equivalence class in $\K(\M)$ is $[1]_T$, and $\K(M)$ is in this class if and only if $M$ is compact;  

(2) it has a unique successor, $[\omega]_T$, which consists of all $\K(M)$ where $M$ is locally compact but not compact; and 

(3) this has $[\omega^\omega]_T=\{\K(M): M$ is Polish, not locally compact$\}$ as a unique successor. 
\end{theorem}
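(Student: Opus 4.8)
The plan is to build the initial segment $[1]_T<[\omega]_T<[\omega^\omega]_T$ of $\K(\M)$ one class at a time, and at each stage the work has the same shape: first characterize topologically which $M$ realize the class, and then prove a single inequality of the form ``if $[\K(M)]_T$ is strictly above the previous class then it is at least the proposed successor''. That one inequality does everything at once — it shows simultaneously that nothing lies strictly in between and that any successor must coincide with the proposed one, so the proposed class is automatically the \emph{unique} successor in $\K(\M)$.

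Part (1) is formal: $[1]_T$ is the minimum of any family of Tukey classes of directed posets since $P\tr 1$ always, and $\K(M)\te 1$ iff $\K(M)$ has a greatest element iff $M$ is compact. For part (2) the crux is that $\K(M)\tr\omega$ whenever $M$ is non-compact: being Lindel\"of and non-compact, $M$ has an increasing countable open cover $\{U_n\}$ with $U_n\ne M$ for all $n$, and the order-preserving map $K\mapsto\min\{n:K\subseteq U_n\}$ (well-defined by compactness) sends any cover of $M$ to an unbounded subset of $\omega$, since a cover cannot lie inside a single $U_n$. Hence every $[\K(M)]_T\ne[1]_T$ lies above $[\omega]_T$, which makes $[\omega]_T$ the unique successor of $[1]_T$. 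To identify exactly the $M$ with $\K(M)\te\omega$: if $M$ is locally compact (and separable metrizable) it is $\sigma$-compact and admits an exhaustion $K_n\subseteq\mathrm{int}\,K_{n+1}$, so $\{K_n\}$ is a countable cofinal chain and $\K(M)\te\omega$ when $M$ is non-compact; conversely, given a countable cofinal chain $\{K_n\}$, every $x\in M$ lies in some $\mathrm{int}\,K_n$ — otherwise choose $y_n\to x$ with $y_n\notin K_n$, and $\{x\}\cup\{y_n\}$ is a compact set lying in no $K_n$ — so $M$ is locally compact.

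For part (3) the proposed successor is $[\omega^\omega]_T$, and $[\omega]_T<[\omega^\omega]_T$ holds already at the level of cofinality ($\omega\le_T\omega^\omega$ while $\cof{\omega^\omega}=\mathfrak d>\omega=\cof{\omega}$, the latter a Tukey invariant by Lemma~\ref{cof_add}). The key inequality is that $\omega^\omega\le_T\K(M)$ whenever $M$ is separable metrizable and not locally compact. For \emph{Polish} $M$ this has a self-contained proof: pick $x$ with no compact neighbourhood; then for each $n$ the closed ball $\overline{B(x,2^{-n})}$ is a complete, non-compact, hence non-totally-bounded neighbourhood of $x$, so it carries an infinite $\delta_n$-separated set $\{z^n_k:k\in\omega\}$; the order-preserving map $\phi:\K(M)\to\omega^\omega$, $\phi(K)(n)=|\{k:z^n_k\in K\}|$ (finite, since a compact set meets a $\delta_n$-separated set in a finite set), is a Tukey quotient, because for a cover $\C$ of $M$ and $g\in\omega^\omega$ the set $T=\{x\}\cup\{z^n_k:n\in\omega,\ k<g(n)\}$ is compact (a sequence in $T$ either meets finitely many levels and has a constant subsequence, or runs through levels $n\to\infty$ and then converges to $x$), so any $K\in\C$ containing $T$ has $\phi(K)\ge g$. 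For \emph{non-Polish} $M$ one instead uses the classical theorem of Hurewicz that $M$ then contains a closed copy of $\Q$, together with the fact that $\K(\Q)\tr\omega^\omega$, to get $\K(M)\tr\K(\Q)\tr\omega^\omega$ via Lemma~\ref{closed}. Combined with part (2) — which says $[\K(M)]_T>[\omega]_T$ exactly when $M$ is not locally compact — this shows every class above $[\omega]_T$ lies above $[\omega^\omega]_T$, so $[\omega^\omega]_T$ (realized, e.g., by $\K(\omega^\omega)\te\omega^\omega$) is the unique successor of $[\omega]_T$. Lastly, $\K(M)\te\omega^\omega$ precisely when $M$ is Polish and not locally compact: if so, the key inequality gives $\omega^\omega\le_T\K(M)$ and Christensen's theorem ($\omega^\omega\tr\K(M)$ iff $M$ is Polish) gives $\K(M)\le_T\omega^\omega$; conversely $\K(M)\te\omega^\omega$ forces $M$ Polish by Christensen and, since $\omega^\omega$ is Tukey-equivalent to neither $\omega$ nor $1$, forces $M$ non-locally-compact by part (2).

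The main obstacle is the non-Polish half of the key inequality in part (3) — namely that $\K(\Q)$ already Tukey-dominates $\omega^\omega$ — and this is the one point where the descriptive-set-theoretic input is unavoidable: Hurewicz's characterization of non-Polish separable metrizable spaces through forbidden closed copies of $\Q$, the structural analysis of $\K(\Q)$, and Christensen's theorem relating Polishness to domination by $\omega^\omega$, are all taken as black boxes, as in \cite{Fr2}. Everything else is elementary order theory together with the reduction lemmas of Section~\ref{background}.
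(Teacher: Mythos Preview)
The paper does not give its own proof of Theorem~\ref{M_initial}; it is quoted from Christensen and Fremlin. The paper does, however, record the key ingredient for part~(3) immediately after the statement --- that the metric fan embeds as a closed subspace of $M$ exactly when $M$ is not locally compact --- which via Lemma~\ref{closed} yields $\K(M)\tr\omega^\omega$ for \emph{every} non-locally-compact separable metrizable $M$ in one stroke, without a Polish/non-Polish split.

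Your arguments for parts~(1) and~(2), and your direct construction for Polish $M$ in part~(3), are correct. The gap is in the non-Polish half of the key inequality. The version of Hurewicz's theorem you invoke --- ``non-Polish implies a closed copy of $\Q$'' --- holds only for coanalytic (or suitably definable) subsets of Polish spaces, not for arbitrary separable metrizable $M$. A Bernstein set $B\subseteq\R$ is separable metrizable, not locally compact, not Polish, yet contains no closed copy of $\Q$: if $D\subseteq B$ is countable and dense-in-itself then its closure $\overline D$ in $\R$ is perfect, and a Bernstein set meets every perfect set uncountably, so $\overline D\cap B\supsetneq D$ and $D$ is not closed in $B$. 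Your route through $\K(\Q)$ therefore does not reach such $M$.

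The repair is already implicit in your Polish argument: drop completeness. In any metric space a closed ball that is not compact contains a sequence with no convergent subsequence, whose range is an infinite closed discrete set. Taking $\{z^n_k:k\in\omega\}$ to be such a set inside $\overline{B(x,2^{-n})}$, your map $\phi(K)(n)=|\{k:z^n_k\in K\}|$ is still well defined (compact meets closed discrete in a finite set) and the compactness of $T=\{x\}\cup\{z^n_k:k<g(n)\}$ is unchanged, so the argument now covers all separable metrizable $M$ uniformly. This is precisely the closed-metric-fan embedding the paper refers to.
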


The simplest example of a Polish, non-locally compact space is the metric fan. The following observation immediately follows from the above theorem but it can also be shown independently of it using the fact that the metric fan embeds as a closed subspace in $M$ if and only if $M$ fails to be locally compact.  

\begin{cor}\label{not_loc_cpt} Let $M$ be separable, metrizable and non-locally compact. Then $\omega^\omega \leq \K(M)$.  
\end{cor}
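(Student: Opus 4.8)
The plan is to deduce Corollary~\ref{not_loc_cpt} directly from Theorem~\ref{M_initial}, and then (as the remark promises) to give an alternative argument that does not invoke the full trichotomy but only the stated embedding fact for the metric fan. For the first route: since $M$ is separable, metrizable and not locally compact, it falls into neither class (1) nor class (2) of Theorem~\ref{M_initial}, so $\K(M)$ is not Tukey equivalent to $1$ or to $\omega$. If $M$ is Polish, then part (3) gives $\K(M) \te \omega^\omega$, and in particular $\omega^\omega \tr \K(M)$. If $M$ is not Polish, then by Christensen's theorem $\omega^\omega \tr \K(M)$ as well. Either way $\omega^\omega \le_T \K(M)$, which is the claim. (One should note the harmless notational point that $\le$ in the corollary's conclusion means $\le_T$ between the Tukey classes.)

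For the independent route, which is the one worth writing out since it motivates later sections, I would start from the quoted fact that the metric fan $F$ embeds as a closed subspace of $M$ precisely when $M$ fails to be locally compact. So, as $M$ is not locally compact, fix a closed copy $A \cong F$ inside $M$. By Lemma~\ref{closed}, $\K(M) \tr \K(A)$, hence it suffices to show $\omega^\omega \le_T \K(F)$. Now $F$ may be taken concretely as $\omega \times (\omega+1)$ with all points of $\omega \times \omega$ isolated and the $n$-th ``spine'' converging to a common apex, or equivalently as the subspace $S_1 = (\omega\cdot\omega + 1)\setminus\{\omega\cdot n : n \in \omega\}$ of $\omega_1$ noted in the introduction, for which the excerpt records $\omega^\omega = \K(S_1)$; so in fact $\K(F) \te \omega^\omega$ on the nose, and certainly $\omega^\omega \le_T \K(F)$.

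If one prefers a self-contained verification that $\omega^\omega \le_T \K(F)$ rather than citing the introduction's identification, the map to exhibit is $\phi : \omega^\omega \to \K(F)$ sending $f$ to the compact set consisting of the apex together with, for each $n$, the first $f(n)+1$ isolated points on the $n$-th spine. This $\phi$ is order-preserving, and any cofinal (equivalently, by Lemma~\ref{cof_dom}-style reasoning, dominating) subset of $\omega^\omega$ maps to a family of compacta whose union is all of $F$: given a compact $K \subseteq F$, $K$ meets each spine in a finite set, so $K$ is dominated by $\phi(g)$ for a suitable $g \in \omega^\omega$, and any $f \ge g$ in the cofinal set has $\phi(f) \supseteq \phi(g) \supseteq K$. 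Hence $\phi$ carries cofinal sets to covers of $F$, i.e.\ to cofinal sets of $\K(F)$, giving $\omega^\omega \tr \K(F)$ and so $\omega^\omega \le_T \K(M)$.

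There is no real obstacle here: the only thing to be careful about is the logical dependence, namely whether to present the corollary as a genuine corollary of Theorem~\ref{M_initial} or as the advertised independent argument through the metric fan embedding; the substantive content in the latter is entirely pushed into the cited topological fact that non-local-compactness is equivalent to containing a closed metric fan, after which Lemma~\ref{closed} and a one-line construction finish it.
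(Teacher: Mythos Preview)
Your two routes are exactly those the paper indicates: a direct appeal to Theorem~\ref{M_initial}, and the independent argument via the closed metric-fan embedding together with Lemma~\ref{closed}. The latter, as you run it through the identification $\omega^\omega = \K(S_1)$ recorded in the introduction, is correct and already suffices.

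Two slips elsewhere, however. In the non-Polish branch of your first route you invoke Christensen to conclude $\omega^\omega \tr \K(M)$; but in this paper $\tr$ means $\ge_T$, and Christensen's theorem asserts $\omega^\omega \ge_T \K(M)$ exactly when $M$ \emph{is} Polish --- so for non-Polish $M$ you obtain only $\K(M) \not\le_T \omega^\omega$, which by itself does not yield $\omega^\omega \le_T \K(M)$. The intended deduction from Theorem~\ref{M_initial} is rather that $[1]_T$, $[\omega]_T$, $[\omega^\omega]_T$ form an \emph{initial segment} of $\K(\M)$, so any $\K(M)$ outside the first two classes is automatically $\ge_T \omega^\omega$. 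Separately, your optional ``self-contained verification'' exhibits an order-preserving $\phi : \omega^\omega \to \K(F)$ with cofinal image; by Lemma~\ref{trtm}(2) that makes $\phi$ a Tukey \emph{quotient}, proving $\omega^\omega \ge_T \K(F)$, the reverse of what you need. The same map $\phi$ does happen to be a Tukey \emph{map} as well (if $U \subseteq \omega^\omega$ is unbounded then some coordinate is unbounded on $U$, so $\phi(U)$ meets one spine in an infinite set; that spine is closed and discrete in $F$, whence $\phi(U)$ has no compact upper bound), but that is the property you should have checked.
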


A similar statement is true for arbitrary subsets of $\omega_1$. Notice that since $\omega_1$ is locally compact, a subset $S$ of $\omega_1$ is locally compact if and only if $S$ is open in its closure, which happens if and only if $\cl{S} \backslash S$ is closed in $\omega_1$. It turns out that when we study various groups of subsets of $\omega_1$ (stationary, co-stationary, closed and unbounded, etc.) it is convenient to work with $\cl{S} \backslash S$. 
\begin{lemma}\label{closure_and_fan}
Let $S$ be a subset of $\omega_1$. Then the following are equivalent:

(1) $S$ is locally compact (i.e. $\cl{S}\backslash S$ is closed),

(2) $S$ does not contain a metric fan as a closed subspace, and

(3) $\omega^\omega \nleq_T \K(S)$.
\end{lemma}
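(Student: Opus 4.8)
The plan is to prove the cycle $(1)\Rightarrow(2)\Rightarrow(3)\Rightarrow(1)$; the implication $(2)\Rightarrow(3)$ is essentially immediate from Lemma~\ref{closed} together with Corollary~\ref{not_loc_cpt}, so the real content lies in $(1)\Rightarrow(2)$ and $(3)\Rightarrow(1)$, which I would handle via the single contrapositive statement: \emph{if $S$ is not locally compact, then $S$ contains a closed copy of the metric fan}. Given that, $(3)\Rightarrow(1)$ follows: if $S$ is not locally compact, then by the contrapositive $S$ has a closed metric fan, so by Lemma~\ref{closed} we get $\K(S) \tr \K(\text{fan})$, and since the metric fan is separable metrizable and not locally compact, Corollary~\ref{not_loc_cpt} gives $\omega^\omega \leq_T \K(\text{fan}) \leq_T \K(S)$, contradicting $\omega^\omega \nleq_T \K(S)$. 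Likewise $(1)\Rightarrow(2)$ is the contrapositive of the "only if" half.

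So the heart of the argument is: \emph{$\cl{S}\setminus S$ not closed in $\omega_1$ implies $S$ contains a closed metric fan}. Suppose $\cl{S}\setminus S$ is not closed, and pick $\gamma \in \cl{\cl{S}\setminus S} \setminus (\cl{S}\setminus S)$; note $\gamma$ is a limit ordinal and, since $\gamma \in \cl{S}$, in fact $\gamma \in S$ (it cannot be in $\cl S \setminus S$ by choice of $\gamma$). Now $\gamma$ is a limit of points of $\cl{S}\setminus S$. I would fix a strictly increasing sequence $\gamma_n \nearrow \gamma$ with each $\gamma_n \in \cl{S}\setminus S$. For each $n$, since $\gamma_n \in \cl{S}$ but $\gamma_n \notin S$, the point $\gamma_n$ is a limit from below of points of $S$; choose a strictly increasing sequence $\beta^n_k \nearrow \gamma_n$ (as $k\to\infty$) with each $\beta^n_k \in S$, and arrange (by deleting initial segments) that $\beta^n_k > \gamma_{n-1}$ for all $k$, so that the blocks $\{\beta^n_k : k\in\omega\}$ for distinct $n$ are pairwise disjoint and the only possible limit point of $\bigcup_n\{\beta^n_k\}$ in $\cl S$ below $\gamma$ is each individual $\gamma_n$, none of which lies in $S$. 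Then $F = \{\gamma\} \cup \bigcup_{n}\{\beta^n_k : k \in \omega\}$ is a subset of $S$; I claim $F$ is closed in $S$ and homeomorphic to the metric fan. The homeomorphism sends $\gamma$ to the apex and $\{\beta^n_k : k\}$ to the $n$-th spine: $\gamma$ is the unique non-isolated point, and a basic neighborhood of $\gamma$ in $F$ contains all but finitely many whole spines plus cofinitely many points of the remaining spines — exactly the metric fan topology, once one checks that each $\beta^n_k$ is isolated in $F$ (true because $\gamma_n \notin F$) and that the only limit point of $F$ in $\cl S$ is $\gamma$ (the $\gamma_n$ are excluded, being outside $S\supseteq F$, and nothing above $\gamma$ is a limit).

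The main obstacle I anticipate is the bookkeeping in the closedness-in-$S$ claim for $F$: one must verify that no point of $S$ other than $\gamma$ is a limit of $F$, which requires the disjointness/interleaving arrangement of the blocks to be set up carefully so that below $\gamma$ the only accumulation points of $\bigcup_n\{\beta^n_k\}$ are the $\gamma_n$'s (which are not in $S$), and above $\gamma$ there are none because $F \subseteq \gamma+1$. A secondary point is to recall or cite why the metric fan embeds as a closed subspace of a separable metrizable $M$ exactly when $M$ is not locally compact — this is the standard fact flagged in the remark before Corollary~\ref{not_loc_cpt}, and it is what legitimizes running the metric-fan criterion in clause (2) rather than arguing directly with $\omega^\omega$; I would simply invoke it. Everything else — the Tukey reductions, the appeal to Christensen's theorem via Corollary~\ref{not_loc_cpt} — is routine given the lemmas already established.
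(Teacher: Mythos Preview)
Your explicit construction of a closed metric fan in $S$ when $\cl{S}\setminus S$ fails to be closed is correct, and it gives a more concrete route to the equivalence $(1)\Leftrightarrow(2)$ than the paper's argument, which instead localizes to the Polish pieces $S\cap[0,\alpha]$ and invokes the standard separable--metrizable fact. (A minor quibble: your justification of $(1)\Rightarrow(2)$ as ``the contrapositive of the `only if' half'' is garbled---the contrapositive of your constructed implication $\neg(1)\Rightarrow\neg(2)$ is $(2)\Rightarrow(1)$, not $(1)\Rightarrow(2)$---but $(1)\Rightarrow(2)$ holds anyway for the trivial reason that closed subspaces of locally compact Hausdorff spaces are locally compact while the fan is not.)

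The real gap is your claim that $(2)\Rightarrow(3)$ is ``essentially immediate from Lemma~\ref{closed} together with Corollary~\ref{not_loc_cpt}''. Those two results yield the \emph{reverse} implication: if $S$ contains a closed fan then $\K(S)\tr\K(\text{fan})$ and $\omega^\omega\le_T\K(\text{fan})$, i.e.\ $\neg(2)\Rightarrow\neg(3)$, which is $(3)\Rightarrow(2)$. Tracing through, what you have actually established is the chain $(3)\Rightarrow(2)\Rightarrow(1)$, and the loop never closes. The missing step---that a locally compact $S\subseteq\omega_1$ satisfies $\omega^\omega\nleq_T\K(S)$---is not addressed anywhere in your proposal and genuinely requires a separate argument; nothing in Lemma~\ref{closed} or Corollary~\ref{not_loc_cpt} speaks to it. The paper handles this by observing that for locally compact $S$ one has $\K(S)=_T 1$, $\omega$, or $\omega\times\omega_1$, and that none of these is Tukey-above $\omega^\omega$ (the case $\omega\times\omega_1\not\tr\omega^\omega$ itself needs a short argument, given later in the paper). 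Without something along these lines, your cycle $(1)\Rightarrow(2)\Rightarrow(3)\Rightarrow(1)$ is incomplete.
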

\begin{proof}
Clearly, $S$ is locally compact if and only if $S\cap [0,\alpha]$ is locally compact for each  $\alpha \in \omega_1$. Also, since a metric fan is countable and each $S\cap [0,\alpha]$ is closed, $S$ contains a metric fan as a closed subspace if and only if there is some $\alpha \in \omega_1$ such that $S\cap [0,\alpha]$ contains a metric fan as a closed subspace. For any $\alpha \in \omega_1$, $S\cap [0,\alpha]$ is separable and metrizable and therefore it is locally compact if and only if it contains a metric fan as a closed subspace. Therefore (1) and (2) are equivalent. 

Lastly, if $S$ contains a metric fan as a closed subspace then $\omega^\omega \leq \K(S)$. If not, then $\cl{S} \backslash S$ is closed, which means $\K(S) $ is Tukey equivalent to either $1$, $\omega$ or $\omega \times \omega_1$, none of which are above $\omega^\omega$ in the Tukey order.
\end{proof}





\section{Order properties of elements of $\K(\SK)$}\label{order_properties}

 In this section we will consider the Tukey invariants -- cofinality, additivity, calibres and spectrum -- introduced  above,  of elements of $\K(\SK)$. 

\subsection{Size and bounds} Every compact subset $K$ of $\omega_1$ is contained in some initial segment, $[0,\alpha]$. Consequently, $\K(\omega_1) = \bigcup \{ \K([0,\alpha]) : \alpha < \omega_1\}$, and for any $S \subseteq \omega_1$, $\K(S) = \bigcup \{ \K([0,\alpha] \cap S) : \alpha < \omega_1\}$. So  $|\K(S)| \leq \ctm \times \omega = \ctm$. Whenever $S$ contains a convergent sequence together with its limit, $|K(S)| \geq \ctm$ and since compact subsets of discrete spaces are finite we immediately have the following lemma. 

\begin{lemma} Let $S$ be a subset of $\omega_1$. The possible  sizes of $\K(S)$ are as follows.
 If $S$ is discrete then:
 (a) $\K(S)$ is finite if $S$ is finite,
 (b) $|\K(S)| = \omega$ if $S$ is countable infinite, and 
 (c) $|\K(S)| = \omega_1$ if $S$ is uncountable.  If $S$ is not discrete then $|\K(S)| = \ctm$.
\end{lemma}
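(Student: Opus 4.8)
The plan is to follow the dichotomy in the statement — $S$ discrete versus $S$ not discrete — and in each case combine the bound $|\K(S)| \le \ctm$, recorded in the paragraph just before the lemma, with a matching lower bound.

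First suppose $S$ is discrete. A subspace of a discrete space is discrete, so a subset of $S$ is compact exactly when it is finite; hence $\K(S) = [S]^{<\omega}$, and the three sub-cases reduce to cardinal arithmetic. When $S$ is finite, $|[S]^{<\omega}| = 2^{|S|}$ is finite, giving (a). When $S$ is infinite, $|[S]^{<\omega}| = |S|$: if $S$ is countably infinite this is $\omega$, giving (b); and if $S$ is uncountable then, as $S \subseteq \omega_1$, necessarily $|S| = \omega_1$, giving (c). (I would add a word noting that (c) is not vacuous, since $\omega_1$ has uncountable discrete subspaces — for instance its set of successor ordinals.)

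Now suppose $S$ is not discrete and fix $x \in S$ not isolated in $S$, so $x \in \cl{S \backslash \{x\}}$. Since an isolated point of $\omega_1$ is isolated in every subspace containing it, $x$ is a limit ordinal, and being a countable limit ordinal it carries a strictly increasing $\omega$-sequence converging to it; refining such a sequence inside $S$ via $x \in \cl{S \backslash \{x\}}$, I can choose pairwise distinct $s_0 < s_1 < \cdots$ in $S$ with $s_n \to x$. Then $L = \{x\} \cup \{s_n : n \in \omega\}$ is a convergent sequence together with its limit, hence a compact subset of $S$, and for each $A \subseteq \omega$ the set $K_A = \{x\} \cup \{s_n : n \in A\}$ is closed in $L$ (its complement in $L$ is the open set $\{s_n : n \notin A\}$) and so compact. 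Since $x \notin \{s_n : n \in \omega\}$ and the $s_n$ are distinct, $A \mapsto K_A$ is injective, whence $|\K(S)| \ge 2^{\omega} = \ctm$, and with the upper bound $|\K(S)| = \ctm$. I do not anticipate a real obstacle — the argument is essentially the one indicated just before the lemma — but the two steps needing a little care are the extraction of a convergent sequence of \emph{pairwise distinct} points of $S$ (where first countability of $\omega_1$ is what makes this work) and the check that the continuum-many sets $K_A$ are genuinely compact and genuinely distinct, which is the heart of the lower bound.
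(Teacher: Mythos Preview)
Your proof is correct and follows exactly the line the paper indicates: the paper does not give a separate proof but records in the paragraph immediately preceding the lemma that $|\K(S)|\le\ctm$, that a convergent sequence with its limit yields $|\K(S)|\ge\ctm$, and that compact subsets of a discrete $S$ are finite, declaring the lemma ``immediate'' from these facts. You have simply supplied the details --- in particular the extraction of a strictly increasing sequence below a non-isolated point and the injection $A\mapsto K_A$ --- that the paper leaves to the reader.
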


By Lemma~\ref{cof_dom} and the previous lemma we clearly have that $[\ctm]^{<\omega}$ bounds each $\K(S)$ from above. We  refine this upper bound for $\K(S)$. Recall $\K(S)=\bigcup_{\alpha \in \omega_1} \K(S \cap [0,\alpha])$. Since each $S \cap [0,\alpha]$ is Polish we have $\K(S \cap [0,\alpha]) \leq_T \omega^\omega$, and so we know by Lemma~\ref{combine} that $\K(S) \leq_T \omega^\omega \times [\omega_1]^{<\omega}$. By Lemma~\ref{not_loc_cpt}, if $S$ is not locally compact then $\omega^\omega$ is a lower bound of $\K(S)$. Now suppose $S$ is unbounded. Enumerate $S$ as $\{\beta_\alpha : \alpha\in \omega_1\}$. Since all compact subsets of $\omega_1$ are countable, the map $\psi : \omega_1 \to \K(S)$ given by $\psi(\alpha) = \{\beta_\alpha\}$ is a Tukey map. Hence in this case $\omega_1$ is a lower bound of $\K(S)$.
\begin{lemma}\label{bounds}
Let $S$ be a subset of $\omega_1$. Then
(a) $\K(S) \leq \omega^\omega \times [\omega_1]^{<\omega}$, 
(b) $\omega^\omega \leq_T \K(S)$ whenever $S$ is non-locally compact, and
(c) $\omega_1 \leq_T \K(S)$ whenever $S$ is unbounded.
\end{lemma}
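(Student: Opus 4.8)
The plan is to prove the three assertions of Lemma~\ref{bounds} separately, building each time on machinery already in place. The statement is an omnibus of upper and lower bounds for $\K(S)$, and each part has essentially been set up in the paragraph immediately preceding the lemma, so the work is mostly assembling the pieces cleanly.

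For part (a), I would start from the decomposition $\K(S) = \bigcup_{\alpha \in \omega_1} \K(S \cap [0,\alpha])$ noted at the start of the subsection. Each initial segment $S \cap [0,\alpha]$ is a bounded subset of $\omega_1$, hence countable, scattered and metrizable, hence Polish; by Christensen's theorem (quoted via Theorem~\ref{M_initial}) this gives $\K(S \cap [0,\alpha]) \leq_T \omega^\omega$, i.e.\ $\omega^\omega \tr (\K(S\cap[0,\alpha]), \K(S))$ once we observe that $\K(S\cap[0,\alpha])$ sits inside $\K(S)$ as a subposet and a cofinal-in-$\K(S\cap[0,\alpha])$ set remains cofinal for that subset inside the larger poset. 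Since $\K(S)$ is Dedekind complete (it is a $\K(X)$), Lemma~\ref{combine} with $Q = \omega^\omega$ and $\kappa = \omega_1$ yields $\omega^\omega \times [\omega_1]^{<\omega} \tr \K(S)$, which is exactly (a).

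Part (b) is immediate from Lemma~\ref{closure_and_fan} (equivalently Corollary~\ref{not_loc_cpt} via the closed embedding of the metric fan): if $S$ is non-locally compact then $\cl{S}\setminus S$ fails to be closed, $S$ contains a metric fan as a closed subspace, and Lemma~\ref{closed} gives $\K(S) \tr \K(\text{fan}) =_T \omega^\omega$. For part (c), if $S$ is unbounded in $\omega_1$ I would enumerate $S = \{\beta_\alpha : \alpha < \omega_1\}$ and consider $\psi : \omega_1 \to \K(S)$, $\psi(\alpha) = \{\beta_\alpha\}$; I need to check this is a Tukey map (in the dual sense of Lemma~\ref{trtm}(1)), i.e.\ that the $\psi$-image of any unbounded subset of $\omega_1$ is unbounded in $\K(S)$. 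An unbounded $A \subseteq \omega_1$ is uncountable, so $\{\beta_\alpha : \alpha \in A\}$ is an uncountable subset of $S$; any upper bound for $\{\psi(\alpha) : \alpha \in A\}$ in $\K(S)$ would be a compact subset of $\omega_1$ containing uncountably many points, contradicting the fact that every compact subset of $\omega_1$ is countable. Hence $\omega_1 \tr \K(S)$.

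I do not expect a serious obstacle here; the only point needing a little care is the passage from ``$C$ cofinal in $\K(S\cap[0,\alpha])$'' to ``$C$ cofinal for $S\cap[0,\alpha]$'s $\K$ inside $\K(S)$'' in part (a) — this is where one uses that an increasing sequence of compact subsets of a single bounded $[0,\alpha]$ has its union still a candidate bound only within $\K(S\cap[0,\alpha])$, which is why Lemma~\ref{combine} (rather than a crude single Tukey map) is the right tool. Everything else is bookkeeping with results already proved: Lemma~\ref{combine}, Lemma~\ref{closed}, Lemma~\ref{closure_and_fan}, and the basic fact that compact subsets of $\omega_1$ are countable.
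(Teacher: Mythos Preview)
Your proposal is correct and follows essentially the same route as the paper: the paragraph preceding the lemma already sketches exactly this argument --- the decomposition $\K(S)=\bigcup_{\alpha<\omega_1}\K(S\cap[0,\alpha])$ with each piece Polish plus Lemma~\ref{combine} for (a), Lemma~\ref{closure_and_fan} for (b), and the singleton map $\alpha\mapsto\{\beta_\alpha\}$ as a Tukey map for (c). Your extra care about the passage from $\omega^\omega \tr \K(S\cap[0,\alpha])$ to the relative statement $\omega^\omega \tr (\K(S\cap[0,\alpha]),\K(S))$ needed for Lemma~\ref{combine} is a genuine point the paper glosses over, and your justification (composing with the inclusion, since a set cofinal in $\K(S\cap[0,\alpha])$ is automatically cofinal for that subset inside $\K(S)$) is exactly right.
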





\subsection{Additivity and cofinality}

If $X$ is compact then $\add{\K(X)}$ is undefined. We compute $\add{\K(S)}$ otherwise.
\begin{lemma} Let $S$ be a subset of $\omega_1$. 
If $S$ is closed and unbounded then $\add{\K(S)}=\omega_1$. If $S$ is not closed then $\add{\K(S)}=\omega$.
\end{lemma}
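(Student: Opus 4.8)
The plan is to handle the two cases separately, in each case producing the additivity as an exact value by exhibiting both a bound and a witnessing unbounded family.

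First suppose $S$ is closed and unbounded. The upper bound $\add{\K(S)} \le \omega_1$ is immediate: since $S$ is unbounded, Lemma~\ref{bounds}(c) gives $\omega_1 \le_T \K(S)$, and then Lemma~\ref{cof_add} (applied with $P = \K(S)$, $Q = \omega_1$) yields $\add{\K(S)} \le \add{\omega_1} = \omega_1$. For the reverse inequality I want to show every countable subfamily of $\K(S)$ is bounded in $\K(S)$. Given countably many compact sets $K_n \subseteq S$, each $K_n$ is contained in some $[0,\alpha_n]$, so $\bigcup_n K_n \subseteq [0,\alpha]$ where $\alpha = \sup_n \alpha_n < \omega_1$. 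Here is where closedness of $S$ matters: $K = \cl{\bigcup_n K_n}$ is a closed (hence compact, being bounded) subset of $\omega_1$, and since each $K_n \subseteq S$ and $S$ is closed, $K \subseteq S$; thus $K \in \K(S)$ is an upper bound. So $\add{\K(S)} \ge \omega_1$, and equality follows.

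Next suppose $S$ is not closed. Then $\add{\K(S)} \ge \omega$ trivially (a finite union of compact sets is compact and lies in $S$), so it suffices to produce a countably infinite unbounded subfamily of $\K(S)$. Since $S$ is not closed, pick $\gamma \in \cl{S} \setminus S$; then $\gamma$ is a limit ordinal and there is a strictly increasing sequence $(s_n)_{n \in \omega}$ in $S$ with $s_n \to \gamma$. Consider the singletons $\{s_n\} \in \K(S)$. If these had an upper bound $K \in \K(S)$, then $K \supseteq \{s_n : n \in \omega\}$, so $K$ being closed would contain the limit $\gamma$; but $\gamma \notin S \supseteq K$, a contradiction. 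Hence $\{\{s_n\} : n \in \omega\}$ is unbounded in $\K(S)$, giving $\add{\K(S)} \le \omega$, and thus $\add{\K(S)} = \omega$.

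I expect no serious obstacle here; the only thing to be careful about is the direction "$S$ not closed $\Rightarrow \add \le \omega$", where one must genuinely exhibit a countable unbounded family and verify unboundedness via the fact that compact subsets of $\omega_1$ are closed and hence contain their limit points. The closed-unbounded case is routine once one observes that the closure of a countable union of bounded subsets of $\omega_1$ is again bounded, and that closedness of $S$ is exactly what is needed to keep that closure inside $S$. (One could alternatively phrase the lower bound $\add{\K(S)} \ge \omega_1$ for closed unbounded $S$ by noting $\K(S)$ is then $\sigma$-directed, but the direct argument above is cleanest.)
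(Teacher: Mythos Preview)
Your proof is correct and follows essentially the same approach as the paper: for closed unbounded $S$, show countable subfamilies of $\K(S)$ are bounded (via closure of a bounded union) while exhibiting an $\omega_1$-sized unbounded family; for non-closed $S$, use singletons along a sequence converging out of $S$. The only cosmetic difference is that for the upper bound $\add{\K(S)}\le\omega_1$ the paper simply notes that the collection of all singletons of $S$ is unbounded, whereas you route through $\omega_1\le_T\K(S)$ and Lemma~\ref{cof_add}; these amount to the same observation, since the Tukey map witnessing $\omega_1\le_T\K(S)$ is precisely $\alpha\mapsto\{\beta_\alpha\}$.
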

\begin{proof}
If $S$ is closed and unbounded,  every countable subset of $\K(S)$ is bounded, but collection of all singletons of $S$ is not. So,  $\add{\K(S)}=\omega_1$. On the other hand if $S$ is not closed, pick a sequence $\{x_n : n \in \omega \}$ in $S$ that does not converge in $S$. Then $\{ \{x_n\}: n\in \omega  \}$ is unbounded in $\K(S)$ and $\add{\K(S)}=\omega$.
\end{proof}

We know that $\cof{\K(\omega^\omega)} = \mathfrak{d}$. In light of Lemma~\ref{bounds} we have the following. 

\begin{lemma}\label{cof} Let $S$  be  a subset of $\omega_1$. 
There are four possibilities for $\cof{\K(S)}$.

(1) If $S$ is compact then $\cof{\K(S)}=1$.

(2) If $\cl{S} \backslash S$ is closed and $S$ is bounded, then $\cof{\K(S)}=\omega$.

(3) If $\cl{S} \backslash S$ is closed and $S$ is unbounded, then $\cof{\K(S)}=\omega_1$.

(4) If $\cl{S} \backslash S$ is not closed, then $\cof{\K(S)}=\mathfrak{d}$.
\end{lemma}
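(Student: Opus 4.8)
The plan is to handle the four cases using the bounds from Lemma~\ref{bounds} together with the spectrum/cofinality machinery, reducing each case to a known cofinality. Case~(1) is immediate: if $S$ is compact then $S$ itself is the maximum of $\K(S)$, so $\cof{\K(S)}=1$. For the remaining cases the key structural fact, already noted in the discussion preceding Lemma~\ref{closure_and_fan}, is that $S\subseteq\omega_1$ is locally compact if and only if $\cl S\setminus S$ is closed, so cases (2)--(3) are exactly the non-compact, locally compact case, split according to boundedness, and case~(4) is the non-locally-compact case.

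For case~(2), assume $\cl S\setminus S$ is closed and $S$ is bounded, say $S\subseteq[0,\alpha]$. Then $\cl S$ is a compact, countable, scattered metrizable space and $S$ is open in $\cl S$, hence locally compact, second countable, non-compact. First I would observe $\cof{\K(S)}\ge\omega$ since $S$ is non-compact (a single compact set cannot cover it). For the reverse, since $S$ is locally compact and second countable it is $\sigma$-compact: write $S=\bigcup_n K_n$ with each $K_n$ compact and $K_n\subseteq\mathrm{int}\,K_{n+1}$; then $\{K_n:n\in\omega\}$ is cofinal in $\K(S)$, so $\cof{\K(S)}\le\omega$. Thus $\cof{\K(S)}=\omega$. (Alternatively, invoke Theorem~\ref{M_initial}(2): $S$ is separable metrizable, locally compact, non-compact, so $\K(S)\te\omega$.)

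For case~(3), assume $\cl S\setminus S$ is closed and $S$ is unbounded. By Lemma~\ref{bounds}(c), $\omega_1\le_T\K(S)$, and by Lemma~\ref{cof_add} (applied to $\omega_1\tr\K(S)$, using $\cof{\omega_1}=\omega_1$) we get $\cof{\K(S)}\ge\omega_1$. For the upper bound, note $\K(S)=\bigcup_{\alpha<\omega_1}\K(S\cap[0,\alpha])$ and each $S\cap[0,\alpha]$ is a bounded, locally compact subset of $\omega_1$, so by case~(2) each $\K(S\cap[0,\alpha])$ has a countable cofinal set $C_\alpha$; then $\bigcup_{\alpha<\omega_1}C_\alpha$ is cofinal in $\K(S)$ (every compact $K\subseteq S$ lies in some $[0,\alpha]$) and has size $\le\omega_1$, so $\cof{\K(S)}\le\omega_1$. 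Hence $\cof{\K(S)}=\omega_1$.

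For case~(4), assume $\cl S\setminus S$ is not closed, i.e. $S$ is non-locally compact. By Lemma~\ref{bounds}(b), $\omega^\omega\le_T\K(S)$, so $\cof{\K(S)}\ge\cof{\omega^\omega}=\mathfrak d$ by Lemma~\ref{cof_add}. For the upper bound, Lemma~\ref{bounds}(a) gives $\K(S)\le\omega^\omega\times[\omega_1]^{<\omega}$, whence $\cof{\K(S)}\le\cof{\omega^\omega\times[\omega_1]^{<\omega}}=\mathfrak d\cdot\omega_1=\mathfrak d$, using $\omega_1\le\mathfrak d$. Therefore $\cof{\K(S)}=\mathfrak d$. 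The only mild obstacle is bookkeeping the $\sigma$-compactness argument in case~(2) (or, equivalently, citing Theorem~\ref{M_initial} cleanly) and making sure the union-of-cofinal-sets argument in case~(3) is valid — both are routine once one notes each bounded piece of $S$ is Polish and locally compact.
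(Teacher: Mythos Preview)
Your proof is correct and follows essentially the same route as the paper: case~(1) trivial, case~(2) via local compactness/Polishness (the paper cites Lemma~\ref{closure_and_fan}, you argue $\sigma$-compactness directly or cite Theorem~\ref{M_initial}), case~(3) by the countable-cofinality of each bounded piece plus the uncountability of $S$, and case~(4) by sandwiching between $\omega^\omega$ and $\omega^\omega\times[\omega_1]^{<\omega}$ exactly as the paper does. One small notational slip: in case~(3) you write ``applied to $\omega_1\tr\K(S)$,'' but in this paper $\tr$ means $\ge_T$, so you want $\K(S)\tr\omega_1$; the conclusion you draw is nonetheless correct.
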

\begin{proof}
Claim (1) is clear, and (2) follows from Lemma~\ref{closure_and_fan}. For (3), note that $\K(S)=\bigcup_{\alpha \in \omega_1} \K(S \cap [0,\alpha])$. But by Lemma~\ref{closure_and_fan}, $\cof{\K(S \cap [0,\alpha])}\leq \omega$ for each $\alpha$ and therefore $\cof{\K(S)}\leq \omega_1$. Since $S$ is uncountable and all compact subsets are countable $\cof{\K(S)}\geq \omega_1$ and we are done. 

For (4), by Lemma~\ref{bounds} we have $\omega^\omega \leq_T \K(S) \leq_T \omega^\omega \times [\omega_1]^{<\omega}$. Since $\cof{\omega^\omega} = \cof{\omega^\omega \times [\omega_1]^{<\omega}} = \mathfrak{d}$, we have $\cof{\K(S)}=\mathfrak{d}$.
\end{proof} 

\subsection{Spectrum of $\K(S)$}\label{spec_S}

Spectra associated with separable metrizable spaces were studied in \cite{Kcal}. Particular attention was paid to the spectrum of $\omega^\omega$, which is contained in spectra of all $\K(M)$'s and $\K(S)$'s if $M$'s and $S$'s are non-locally compact. It turns out that $\spec{\omega^\omega}$ can be consistently equal to any finite set of regular cardinals as well as some infinite sets of regular cardinals. Here we compute $\spec{\K(S)}$ in terms of $\spec{\omega^\omega}$. If $S$ is a bounded subset of $\omega_1$ then $\K(S)$ is Tukey equivalent to either $\K(1)$ or $\K(\omega)$ or $\K(\omega^\omega)$. So the interesting case for the spectrum of $\K(S)$ is when $S$ is unbounded. 

\begin{theorem}
Suppose $S\subseteq \omega_1$ is unbounded. If $\cl{S} \backslash S$ is closed then $\spec{\K(S)} = \{\omega_1\}$ and if $\cl{S} \backslash S$ is not closed then $\spec{\K(S)} = \{\omega_1\} \cup \spec{\omega^\omega}$. 
\end{theorem}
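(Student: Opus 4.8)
The plan is to squeeze $\spec{\K(S)}$ between explicit lower and upper bounds furnished by Lemma~\ref{bounds}, and then to read off the answer using the ``formal'' properties of spectra already available: monotonicity under $\le_T$ (Lemma~\ref{specNN_in_all}), additivity over finite products (Corollary~\ref{prod_spec}), and the sandwich $\spec{P}\subseteq[\add{P},\cof{P}]^r$ together with $\cof{\cof{P}}\in\spec{P}$ (Lemma~\ref{add_cof_spec}). The two clauses of the statement correspond to cases (3) and (4) of the cofinality computation in Lemma~\ref{cof}, and I would handle them in turn; throughout I use the convention (consistent with the displayed statement) that $\spec{P}$ collects the \emph{uncountable} regular cardinals which fail to be calibres of $P$.

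\emph{Case 1: $\cl{S}\setminus S$ is closed.} By Lemma~\ref{cof}(3), $\cof{\K(S)}=\omega_1$ (note $S$ is non-compact, being unbounded, so $\K(S)$ has no largest element and Lemma~\ref{add_cof_spec} applies). Hence $\spec{\K(S)}\subseteq[\add{\K(S)},\omega_1]^r$, so every member of $\spec{\K(S)}$ is an uncountable regular cardinal $\le\omega_1$, i.e.\ equals $\omega_1$; and $\omega_1=\cof{\cof{\K(S)}}\in\spec{\K(S)}$. Therefore $\spec{\K(S)}=\{\omega_1\}$. (When $S$ is in addition closed the inclusion $\spec{\K(S)}\subseteq\{\omega_1\}$ is even more transparent: then $\add{\K(S)}=\omega_1$, so $\spec{\K(S)}\subseteq[\omega_1,\omega_1]^r$.)

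\emph{Case 2: $\cl{S}\setminus S$ is not closed.} Then $S\subseteq\omega_1$ is not locally compact, so Lemma~\ref{bounds}(b) gives $\omega^\omega\le_T\K(S)$, while Lemma~\ref{bounds}(c) gives $\omega_1\le_T\K(S)$ because $S$ is unbounded. By Lemma~\ref{specNN_in_all}, $\spec{\omega^\omega}\subseteq\spec{\K(S)}$ and $\{\omega_1\}=\spec{\omega_1}\subseteq\spec{\K(S)}$ (here $\omega_1\in\spec{\omega_1}$ since every uncountable subset of $\omega_1$ is cofinal); hence $\{\omega_1\}\cup\spec{\omega^\omega}\subseteq\spec{\K(S)}$. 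For the reverse inclusion, Lemma~\ref{bounds}(a) gives $\K(S)\le_T\omega^\omega\times[\omega_1]^{<\omega}$, so Corollary~\ref{prod_spec} and Lemma~\ref{specNN_in_all} give $\spec{\K(S)}\subseteq\spec{\omega^\omega}\cup\spec{[\omega_1]^{<\omega}}$. It thus remains to check $\spec{[\omega_1]^{<\omega}}=\{\omega_1\}$: the singletons $\{\,\{\alpha\}:\alpha<\omega_1\,\}$ form an $\omega_1$-sized subfamily of $[\omega_1]^{<\omega}$, no uncountable subcollection of which is bounded, so $[\omega_1]^{<\omega}$ fails calibre $\omega_1$; and every regular $\kappa>\omega_1$ is vacuously a calibre of $[\omega_1]^{<\omega}$ because $|[\omega_1]^{<\omega}|=\omega_1$. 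Combining the two inclusions yields $\spec{\K(S)}=\{\omega_1\}\cup\spec{\omega^\omega}$.

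Almost everything above is an assembly of results already proved; the only self-contained inputs are the small calibre computations $\spec{\omega_1}=\{\omega_1\}$ and $\spec{[\omega_1]^{<\omega}}=\{\omega_1\}$. I expect the main --- and rather mild --- difficulty to be ensuring the two sides of Case 2 meet exactly: the lower bound must carry \emph{both} $\omega^\omega$ and $\omega_1$ (neither alone gives the conclusion, and the $\omega^\omega$ half is precisely where the hypothesis that $\cl{S}\setminus S$ is not closed --- equivalently, that $S$ is non-locally compact --- enters, via Lemma~\ref{bounds}(b)), while the upper bound $\omega^\omega\times[\omega_1]^{<\omega}$ drags along the factor $[\omega_1]^{<\omega}$, whose spectrum has to be shown to add nothing past $\omega_1$.
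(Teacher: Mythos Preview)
Your proof is correct and follows essentially the same approach as the paper's: both sandwich $\K(S)$ between the bounds from Lemma~\ref{bounds} and read off the spectrum via monotonicity (Lemma~\ref{specNN_in_all}) and productivity (Corollary~\ref{prod_spec}), with Case~1 handled by the cofinality bound from Lemma~\ref{cof}(3). The only cosmetic differences are that the paper packages the lower bound in Case~2 as the single relation $\omega^\omega\times\omega_1\le_T\K(S)$ rather than invoking $\omega^\omega\le_T\K(S)$ and $\omega_1\le_T\K(S)$ separately, and that you spell out the small computations $\spec{\omega_1}=\spec{[\omega_1]^{<\omega}}=\{\omega_1\}$ which the paper leaves implicit; your explicit note about the ``uncountable regular'' reading of $\spec{}$ is also well taken, since the paper's own argument tacitly relies on it.
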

\begin{proof}
For unbounded $S$, $\omega_1 \leq_T \K(S)$. If $\cl{S} \backslash S$ is closed then $\cof{\K(S)} = \omega_1$ and $\kappa \leq_T \K(S)$ implies $\cof{\kappa}\leq \omega_1$. So $\kappa$ must be equal to $\omega_1$ and $\spec{\K(S)} = \{ \omega_1 \}$.

If $\cl{S} \backslash S$ is not closed then $\omega^\omega \times \omega_1 \leq_T \K(S) \leq_T \omega^\omega \times [\omega_1]^{<\omega}$. And $\spec{\omega^\omega \times \omega_1} \subseteq \spec{\K(S)} \subseteq \spec{\omega^\omega \times [\omega_1]^{<\omega}}$.
But from Corollary~\ref{prod_spec}, it is clear that $\spec{\omega^\omega \times \omega_1} = \{\omega_1\} \cup \spec{\omega^\omega} = \spec{\omega^\omega \times [\omega_1]^{<\omega}}$.
\end{proof}

\subsection{Calibres of $\K(S)$}\label{cal_S}
Let $S$ be a subset of $\omega_1$. Since $\K(S)$ has size at most $\ctm$ we focus on calibres $\omega_1$, $(\omega_1, \omega_1, \omega)$ and $(\omega_1, \omega)$. Calibres for $\K(S)$'s when $S$ is bounded are known using facts about calibres of $\K(M)$'s given in \cite{Kcal}. If $\K(S)$ is Tukey equivalent to $1$ or $\omega$ then $\K(S)$ has calibres $\omega_1$, $(\omega_1, \omega_1, \omega)$ and $(\omega_1, \omega)$; If $\K(S)$ is Tukey equivalent to $\omega^\omega$ then $\K(S)$ has calibre $(\omega_1,\omega)$ but has the other two calibres if and only if $\omega_1 <\mathfrak{b}$. 

Now let $S$ be unbounded. We showed that $\omega_1 \leq_T \K(S)$ and therefore $\K(S)$ fails to have calibre $\omega_1$. The case of calibre $(\omega_1, \omega)$ has already been settled by Todor\v{c}evi\'{c} in \cite{Tod1}. Recall that a subset of an ordinal is called stationary if and only if it meets every closed and unbounded subset of $\omega_1$. Using the fact that $\K(S)\geq [\omega_1]^{<\omega}$ if and only if $\K(S)$ does not have calibre $(\omega_1, \omega)$, Todor\v{c}evi\'{c}'s theorem becomes: 
 
\begin{lemma}[Todor\v{c}evi\'{c}]\label{cal1,0} Let $S\subseteq \omega_1$ be unbounded. Then $\K(S)$ has calibre $(\omega_1, \omega)$ if and only if $S$ is stationary if and only if $\K(S) \ngeq [\omega_1]^{<\omega}$.
\end{lemma}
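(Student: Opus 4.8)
The plan is to establish the two biconditionals by proving a cycle of implications: $S$ stationary $\Rightarrow$ $\K(S)$ has calibre $(\omega_1,\omega)$ $\Rightarrow$ $\K(S) \ngeq [\omega_1]^{<\omega}$ $\Rightarrow$ $S$ stationary. The middle implication is immediate from Lemma~\ref{caltuk}(2): since $S$ is unbounded, $\add{\K(S)} = \omega$ only fails when $S$ is closed, but even in the closed case $\add{\K(S)} = \omega_1 = \lambda$, so the hypothesis $\add{P'} \ge \lambda$ of the converse direction in Lemma~\ref{caltuk}(2) is met (with $\kappa = \omega_1$, $\lambda = \omega$, noting $\add{\K(S)} \ge \omega$ always), giving that $\K(S) \tr [\omega_1]^{<\omega}$ is equivalent to failure of calibre $(\omega_1,\omega)$. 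Actually, more carefully: $[\kappa]^{<\lambda}$ with $\kappa = \omega_1$, $\lambda = \omega$ is exactly $[\omega_1]^{<\omega}$, so Lemma~\ref{caltuk}(2) directly says $\K(S) \tr [\omega_1]^{<\omega}$ implies $\K(S)$ fails calibre $(\omega_1,\omega)$, and the converse holds since all finite subsets of $\K(S)$ are bounded (as $\K(S)$ is directed). So the substantive content is the equivalence of stationarity of $S$ with $\K(S)$ having calibre $(\omega_1,\omega)$.

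First I would prove that if $S$ is \emph{not} stationary, then $\K(S)$ fails to have calibre $(\omega_1,\omega)$. If $S$ is non-stationary pick a club $C \subseteq \omega_1$ disjoint from $S$; enumerate $C$ in increasing order as $\{\gamma_\alpha : \alpha < \omega_1\}$ (a continuous, increasing enumeration). Since $S$ is unbounded, for each $\alpha < \omega_1$ choose $\beta_\alpha \in S$ with $\gamma_\alpha < \beta_\alpha < \gamma_{\alpha+1}$, and set $T = \{ \{\beta_\alpha\} : \alpha < \omega_1 \} \subseteq \K(S)$, an $\omega_1$-sized subset. I claim no infinite subset of $T$ is bounded in $\K(S)$: if $A \subseteq \omega_1$ is infinite and $\{\beta_\alpha : \alpha \in A\}$ were contained in a compact $K \in \K(S)$, then $K$ is a countable compact (hence bounded) subset of $\omega_1$, so $\sup K \in \omega_1$; but by continuity of the enumeration, $\delta := \sup_{\alpha \in A}\gamma_\alpha$ is a limit point of $C$, hence $\delta \in C$, and since the $\beta_\alpha$ accumulate at $\delta$ from below we get $\delta \in \cl{K} = K \subseteq S$, contradicting $C \cap S = \emptyset$. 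So $\K(S)$ fails calibre $(\omega_1,\omega)$, in fact it fails even calibre $(\omega_1,\omega,\omega)$.

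Next I would prove that if $S$ \emph{is} stationary, then $\K(S)$ has calibre $(\omega_1,\omega)$ — this is the heart of the matter and I expect it to be the main obstacle, as it is essentially Todor\v{c}evi\'{c}'s pressing-down argument. Given any $\{K_\alpha : \alpha < \omega_1\} \subseteq \K(S)$, since each $K_\alpha$ is a countable compact subset of $\omega_1$ it is bounded, so $\sup K_\alpha < \omega_1$; by Fodor's lemma applied to the regressive-on-a-club map $\alpha \mapsto \sup(K_\alpha \cap \alpha)$ (defined on the club of $\alpha$ with $\sup K_\alpha \ge \alpha$ or on all of $\omega_1$ after thinning so that $\sup K_\beta < \alpha$ for $\beta$ in a cofinal set — the standard trick is to first pass to a stationary set on which $\alpha \mapsto \sup K_\alpha$ is either bounded or $\ge \alpha$, and in the bounded case we are trivially done), one finds a stationary $E \subseteq \omega_1$ and a fixed $\xi < \omega_1$ with $K_\alpha \cap \alpha \subseteq [0,\xi]$ for all $\alpha \in E$. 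Now $K_\alpha \cap [0,\xi]$ ranges over the compact subsets of the Polish (countable, metrizable) space $S \cap [0,\xi]$, of which there are at most $\ctm$; shrinking $E$ we may assume $K_\alpha \cap [0,\xi]$ is a fixed compact set $L$ for all $\alpha \in E$. Finally, choosing an increasing $\omega$-sequence $\alpha_0 < \alpha_1 < \cdots$ from $E$, the sets $K_{\alpha_0} \setminus [0,\xi], K_{\alpha_1}\setminus [0,\xi], \dots$ are pairwise "separated" in the sense that $K_{\alpha_n}\setminus[0,\xi] \subseteq (\xi, \alpha_n) \cup [\alpha_n,\sup K_{\alpha_n}]$; taking $T_0 = \{K_{\alpha_n} : n \in \omega\}$ I claim every infinite (hence every $\omega$-sized) subcollection $\{K_{\alpha_{n_i}} : i \in \omega\}$ is bounded: their union is $L \cup \bigcup_i (K_{\alpha_{n_i}} \setminus [0,\xi])$, which has supremum $\delta = \sup_i \sup K_{\alpha_{n_i}} < \omega_1$, and one checks the closure of this union adds only the point $\delta$ if $\delta$ is a limit of the $\sup K_{\alpha_{n_i}}$ — and here one arranges, when selecting $E$, that $\sup K_\alpha < \alpha'$ for the next chosen $\alpha' \in E$, so that the closure point $\delta$ is actually a limit point of $E$, hence of the stationary set, and lies in $S$ provided $\delta \in S$. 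This last point requires care: to guarantee $\delta \in S$ one uses that $S$ is stationary and the selection of $\alpha_n$'s can be made to converge to a point of $S$ — concretely, apply the argument to the intersection of $E$ with the club of limits of $E$, pick $\delta \in S$ in that intersection, and enumerate $\alpha_n \uparrow \delta$ cofinally below it, so that the union of any $\omega$ of the $K_{\alpha_n}$ has closure contained in $(\text{the union}) \cup \{\delta\} \subseteq S$, giving a compact bound in $\K(S)$. Thus $T_0$ witnesses calibre $(\omega_1,\omega)$, completing the cycle. The delicate bookkeeping — interleaving the Fodor thinning, the $\ctm$-to-$1$ thinning, and the requirement that the eventual accumulation point land in $S$ — is where the real work lies, and I would organize it by first reducing (via stationarity of $S$) to the case where all $K_\alpha$ have the same trace below a fixed $\xi$ and $\alpha \in K_\alpha$-type normalization holds, and only then extracting the convergent $\omega$-subsequence aimed at a chosen point of $S$.
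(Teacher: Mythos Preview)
The paper does not actually prove this lemma; it attributes the result to Todor\v{c}evi\'{c} and only remarks that the non-stationary direction follows from the existence of an uncountable closed discrete subset (Lemma~\ref{stat_discrete}). Your handling of that direction and of the equivalence with $\K(S)\tr[\omega_1]^{<\omega}$ via Lemma~\ref{caltuk}(2) is correct and matches the paper's sketch.

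Where you go beyond the paper is the implication ``$S$ stationary $\Rightarrow$ $\K(S)$ has calibre $(\omega_1,\omega)$'', and there your argument has a genuine gap. After Fodor gives a stationary $E$ with $K_\alpha\cap\alpha\subseteq[0,\xi]$, you write ``shrinking $E$ we may assume $K_\alpha\cap[0,\xi]$ is a fixed compact set $L$ for all $\alpha\in E$''. This step fails: $S\cap[0,\xi]$ can have $\ctm$ compact subsets, and pigeonholing an $\omega_1$-sized (stationary) set into $\ctm$ classes need not yield an uncountable --- let alone stationary --- fiber. Without stationarity of the thinned $E$ you cannot carry out the subsequent step of picking $\delta\in S$ as a limit of $E$.

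The outline is salvageable, but the lower parts $L_\alpha=K_\alpha\cap[0,\xi]$ must be controlled differently. One route: fix an order-preserving cofinal $\phi:\omega^\omega\to\K(S\cap[0,\xi])$ and choose $f_\alpha\in\omega^\omega$ with $\phi(f_\alpha)\supseteq L_\alpha$. Recursively thin $E$ to stationary sets $E=A_{-1}\supseteq A_0\supseteq A_1\supseteq\cdots$ so that $f_\alpha(n)$ is constant on $A_n$ (possible since each coordinate takes countably many values). The clubs of limit points of the $A_n$ have club intersection, which meets $S$; pick $\delta$ there and $\alpha_n\in A_n\cap\delta$ with $\alpha_n\nearrow\delta$. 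Then $\{f_{\alpha_n}\}$ is bounded in $\omega^\omega$, hence $\{L_{\alpha_n}\}$ is bounded in $\K(S\cap[0,\xi])$, and (arranging $m_{\alpha_n}<\alpha_{n+1}$ via the closing-off club, as you indicate) the only new limit point of $\bigcup_n K_{\alpha_n}$ above $\xi$ is $\delta\in S$. This repairs the argument; the key missing idea is that one cannot freeze the lower trace outright but must instead run the $\omega^\omega$-calibre diagonalisation \emph{simultaneously} with the choice of $\delta\in S$.
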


In fact, Todor\v{c}evi\'{c} shows that if $S$ is not stationary then $S$ contains an uncountable closed discrete subset, which gives an uncountable collection of singletons such that any infinite subcollection is unbounded in $\K(S)$. 

\begin{lemma}[Todor\v{c}evi\'{c}]\label{stat_discrete}
Suppose $S$ is unbounded, $C$ is closed, unbounded and $S\cap C = \emptyset$. Then there exist strictly increasing sequences $\{s_\alpha : \alpha <\omega_1\} \subseteq S$ and $\{ c_\alpha : \alpha < \omega_1 \} \subseteq C$ such that for each $\alpha <\omega_1$, $s_\alpha < c_\alpha < s_{\alpha +1}$. Hence, $S$ contains an uncountable closed discrete subset.
\end{lemma}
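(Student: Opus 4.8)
The plan is to build the two increasing sequences by a transfinite recursion of length $\omega_1$, interleaving elements of $S$ and of $C$, and then to argue that $\{s_\alpha : \alpha < \omega_1\}$ is closed discrete in $S$. The key observation driving the recursion is that $S$ and $C$ are disjoint while $C$ is closed unbounded: this lets us always ``jump over'' a point of $C$ between consecutive chosen points of $S$, and it is precisely these separating points of $C$ that will prevent the $s_\alpha$'s from accumulating inside $S$.

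First I would set up the recursion. Suppose $\{s_\beta : \beta < \alpha\}$ and $\{c_\beta : \beta < \alpha\}$ have been chosen so that $\beta < \gamma < \alpha$ implies $s_\beta < c_\beta < s_\gamma$. Let $\delta = \sup_{\beta < \alpha}(s_\beta + 1)$; this is a countable ordinal since $\alpha < \omega_1$. Because $S$ is unbounded in $\omega_1$, choose $s_\alpha \in S$ with $s_\alpha > \delta$ and also $s_\alpha > \sup_{\beta<\alpha} c_\beta$ (the latter sup is again countable). Then choose $c_\alpha \in C$ with $c_\alpha > s_\alpha$, possible since $C$ is unbounded. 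Finally I must arrange $c_\alpha < s_{\alpha+1}$, but that is handled at the next stage by the requirement $s_{\alpha+1} > \sup_{\beta \le \alpha} c_\beta$. One technical point to record: at a limit stage $\alpha$, I want $s_\alpha$ to be strictly above $\sup_{\beta<\alpha} s_\beta$; since the $c_\beta$ separate consecutive $s_\beta$'s, we in fact have $\sup_{\beta<\alpha} s_\beta = \sup_{\beta<\alpha} c_\beta =: \ell_\alpha$, and choosing $s_\alpha > \ell_\alpha$ suffices. This produces strictly increasing $\{s_\alpha\}\subseteq S$ and $\{c_\alpha\}\subseteq C$ with $s_\alpha < c_\alpha < s_{\alpha+1}$ for all $\alpha$.

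Next I would show $D := \{s_\alpha : \alpha < \omega_1\}$ is closed and discrete in $S$, equivalently that $\cl{D}$ (closure in $\omega_1$) meets $S$ only in $D$ and that $D$ has no accumulation point in $S$ — it suffices to show every point of $\cl{D}\setminus D$ lies outside $S$, and that $D$ is discrete in itself. For discreteness: given $\alpha$, if $\alpha = \gamma+1$ is a successor then $(c_\gamma, c_\alpha)$ is a neighbourhood of $s_\alpha$ meeting $D$ only in $s_\alpha$; if $\alpha$ is a limit, then $s_\alpha > \ell_\alpha$ and $(\ell_\alpha, c_\alpha)$ works, since every $s_\beta$ with $\beta < \alpha$ satisfies $s_\beta < c_\beta \le \ell_\alpha$ and every $s_\beta$ with $\beta > \alpha$ satisfies $s_\beta > c_\alpha$. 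For closedness in $S$: let $t$ be an accumulation point of $D$ in $\omega_1$. Then $t$ is a limit ordinal and there is a limit $\alpha < \omega_1$ with $t = \sup_{\beta<\alpha} s_\beta = \ell_\alpha$. But $\ell_\alpha = \sup_{\beta<\alpha} c_\beta$ as well, so $t$ is an accumulation point of $\{c_\beta : \beta<\alpha\} \subseteq C$; since $C$ is closed, $t \in C$, hence $t \notin S$ because $S\cap C = \emptyset$. (Also $t \ne s_\gamma$ for any $\gamma$, since $s_\gamma \notin C$.) Thus $D$ is closed discrete in $S$, of size $\omega_1$.

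The main obstacle, such as it is, is purely bookkeeping: making sure the suprema taken at each recursion step are genuinely below $\omega_1$ (they are, being countable sups of countable ordinals) and that the interleaving conditions chain together correctly across limit stages, which is exactly where the hypothesis that $C$ is \emph{closed} is used — it converts ``$\sup$ of the $c_\beta$'s'' into an actual point of $C$, and disjointness from $S$ then keeps that point out of $S$. No deep idea is needed beyond recognizing that the separating points $c_\alpha$ must be drawn from a club disjoint from $S$ so that the limit points of the constructed $S$-sequence are absorbed by that club.
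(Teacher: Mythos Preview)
The paper does not supply its own proof of this lemma; it is stated with attribution to Todor\v{c}evi\'{c} (with reference to \cite{Tod1}) and used as a tool in the proof of Lemma~\ref{calibre-club}. So there is nothing in the paper to compare your argument against.

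That said, your proof is correct and is exactly the standard argument. The recursion goes through because at each countable stage the relevant suprema are countable and both $S$ and $C$ are unbounded; your bookkeeping for the interleaving $s_\alpha < c_\alpha < s_{\alpha+1}$ is right. In the second part, your identification of any accumulation point $t$ of $D$ with some $\ell_\alpha = \sup_{\beta<\alpha} s_\beta = \sup_{\beta<\alpha} c_\beta$ (for $\alpha$ a limit) is the crucial step, and the conclusion $t \in C$ (hence $t \notin S$) uses precisely the two hypotheses that $C$ is closed and that $S \cap C = \emptyset$. One tiny cosmetic point: at $\alpha=0$ there is no $c_{-1}$, so for discreteness at $s_0$ use $[0,c_0)$ instead; this does not affect the argument.
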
 


Next we show exactly when $\K(S)$ has calibre $(\omega_1, \omega_1, \omega)$. Recall that a subset of $ \omega_1$ is called co-stationary if its complement is stationary. Equivalently $S$ is not co-stationary if and only if it contains a cub (closed and unbounded) set. Note that if $S$ is unbounded and $\cl{S}\backslash S$ is bounded then $S$ is co-stationary. In particular, $S \backslash (\sup (\cl{S}\backslash S)+1)$ is a cub subset of $S$. 

\begin{lemma}\label{calibre-club}
Let $S\subseteq \omega_1$ be unbounded. Then  $\K(S)$ has calibre $(\omega_1,\omega_1,\omega)$ if and only if $\cl{S}\backslash S$ is bounded and either $\cl{S}\backslash S$ is closed or $\omega_1<\mathfrak{b}$.
\end{lemma}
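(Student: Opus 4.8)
The plan is to prove both directions by splitting into cases according to whether $\cl{S}\backslash S$ is bounded, closed, etc., and to reduce each case to known facts about calibres of $\omega^\omega$ and of $[\omega_1]^{<\omega}$, using Lemmas~\ref{calibregoesdown}, \ref{cal_product2}, \ref{cal1,0}, \ref{stat_discrete} and \ref{bounds}, together with the bounds $\omega^\omega \le_T \K(S) \le_T \omega^\omega \times [\omega_1]^{<\omega}$ from Lemma~\ref{bounds}. The key elementary observations are: (i) $[\omega_1]^{<\omega}$ has calibre $(\omega_1,\omega_1,\omega)$ (any $\omega_1$-sized family of finite sets has, by the $\Delta$-system lemma, an $\omega_1$-sized $\Delta$-subsystem, and any $\omega$-many members of a $\Delta$-system are bounded in $[\omega_1]^{<\omega}$); and (ii) whether $\omega^\omega$ has calibre $(\omega_1,\omega_1,\omega)$ is governed by $\mathfrak{b}$: it does if $\omega_1<\mathfrak b$, and it fails if $\mathfrak b=\omega_1$ (an unbounded $\omega_1$-chain witnesses the failure, since a countable subset of an $\le^*$-increasing unbounded sequence can still be unbounded — more precisely, if $\mathfrak b=\omega_1$ fix a $\le^*$-increasing unbounded $\{f_\alpha\}$, then no $\omega_1$-sized subfamily has all its countable subsets bounded).

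For the ``if'' direction, suppose $\cl{S}\backslash S$ is bounded, say by $\gamma$. Write $S = (S\cap[0,\gamma]) \sqcup (S\backslash[0,\gamma])$ as a free union of clopen pieces, so $\K(S) = \K(S\cap[0,\gamma]) \times \K(S\backslash[0,\gamma])$. The first factor is $\K$ of a bounded (hence Polish) set, so it is Tukey below $\omega^\omega$; the second factor, call it $S'$, is closed in $\omega_1$ (since $\cl{S'}\backslash S' \subseteq \cl{S}\backslash S$ is now empty above $\gamma$) and unbounded, so $\K(S') =_T \omega_1$, which has calibre $(\omega_1,\omega_1,\omega)$ trivially. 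By Lemma~\ref{cal_product2}(1), $\K(S)$ has calibre $(\omega_1,\omega_1,\omega)$ iff both factors do, i.e.\ iff $\K(S\cap[0,\gamma])$ does. If $\cl{S}\backslash S$ is closed (hence $S$ locally compact), then $S\cap[0,\gamma]$ is locally compact Polish, so $\K(S\cap[0,\gamma])$ is $=_T 1$ or $\omega$, which has calibre $(\omega_1,\omega_1,\omega)$; if instead $\omega_1<\mathfrak b$, then $\K(S\cap[0,\gamma]) \le_T \omega^\omega$ and $\omega^\omega$ has calibre $(\omega_1,\omega_1,\omega)$, so by Lemma~\ref{calibregoesdown} (with $\omega_1$ regular) $\K(S\cap[0,\gamma])$ does too. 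Either way $\K(S)$ has the calibre.

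For the ``only if'' direction, I argue contrapositively in two cases. First, if $\cl{S}\backslash S$ is unbounded, I claim $[\omega_1]^{<\omega} \le_T \K(S)$, so that by Lemma~\ref{cal_product2}(2) (applied after noting $[\omega_1]^{<\omega}$ fails calibre $(\omega_1,\omega_1,\omega)$ — e.g.\ the singletons witness failure of $(\omega_1,\omega_1)$ and a fortiori of $(\omega_1,\omega_1,\omega)$ since no infinite set of distinct singletons is bounded) the calibre fails for $\K(S)$; the claim follows from Lemma~\ref{stat_discrete} applied to $S$ and a cub set inside $\cl{S}\backslash S$ disjoint from $S$ — actually one must be slightly careful, as $\cl{S}\backslash S$ need only be unbounded, not cub, but it does meet a cub, and Todor\v{c}evi\'c's construction then produces an uncountable closed discrete subset of $S$, giving $[\omega_1]^{<\omega}\le_T\K(S)$. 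Second, if $\cl{S}\backslash S$ is bounded but not closed and $\mathfrak b=\omega_1$, then decomposing as above, $\K(S) =_T \K(S\cap[0,\gamma]) \times \omega_1$ where now $S\cap[0,\gamma]$ is bounded and \emph{not} locally compact (its closure minus itself is a nonclosed bounded set, hence nonempty with a nonisolated point), so it is Polish non-locally-compact and $\K(S\cap[0,\gamma]) \ge_T \omega^\omega$ by Corollary~\ref{not_loc_cpt}; since $\mathfrak b=\omega_1$, $\omega^\omega$ fails calibre $(\omega_1,\omega_1,\omega)$, hence so does $\K(S\cap[0,\gamma])$ by Lemma~\ref{calibregoesdown}, and hence so does the product $\K(S)$ by Lemma~\ref{cal_product2}(2).

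The main obstacle I anticipate is the first case of the ``only if'' direction: Lemma~\ref{stat_discrete} as stated takes a cub set $C$ with $S\cap C=\emptyset$, whereas here I only know $\cl{S}\backslash S$ is unbounded. The fix is to observe that $\cl{S}\backslash S$, being unbounded, contains an unbounded subset $D$ disjoint from $S$; if $D$ happens not to be closed one passes to a suitable subset, or alternatively one directly extracts from the unboundedness of $\cl{S}\backslash S$ an uncountable closed discrete subset of $S$ by recursion (choosing $s_\alpha \in S$ and $c_\alpha \in \cl{S}\backslash S$ alternately with $\sup$s matching at limits — the limit of the $s_\alpha$'s then lies in $\cl{S}\backslash S$, confirming closed discreteness of $\{s_\alpha\}$ in $S$). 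Handling this recursion cleanly, and double-checking that the resulting set is genuinely closed in $S$ (not just in $\omega_1$), is the one genuinely delicate point; everything else is bookkeeping with the product and monotonicity lemmas.
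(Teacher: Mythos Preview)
Your ``if'' direction and Case~2 of the ``only if'' direction are essentially the paper's argument. The real problem is Case~1 of the ``only if'' direction, and it is a genuine gap, not a technicality.

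You claim that if $\cl{S}\setminus S$ is unbounded then $[\omega_1]^{<\omega}\le_T\K(S)$. This is false. By Lemma~\ref{cal1,0}, $[\omega_1]^{<\omega}\le_T\K(S)$ holds exactly when $S$ is \emph{non}-stationary. But there are plenty of stationary $S$ with $\cl{S}\setminus S$ unbounded: any stationary co-stationary $S$, or any $S$ containing a cub with $\cl{S}\setminus S$ unbounded (the $\Sigma(\omega^{\omega_1})$ case). For such $S$ there is no uncountable closed discrete subset at all --- if there were, the singletons would witness failure of calibre $(\omega_1,\omega)$, contradicting Lemma~\ref{cal1,0}. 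So neither Lemma~\ref{stat_discrete} (which needs a cub disjoint from $S$) nor your proposed recursive ``fix'' can possibly produce one. Your worry at the end is well-placed, but the obstruction is not a matter of care in the recursion; the target object does not exist. (Incidentally, your preliminary observation~(i) that $[\omega_1]^{<\omega}$ \emph{has} calibre $(\omega_1,\omega_1,\omega)$ via the $\Delta$-system lemma is also wrong --- $\omega$-many distinct members of a $\Delta$-system still have infinite union --- and you yourself correctly contradict it later.)

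The paper handles this case by a direct construction rather than a Tukey reduction. Assuming calibre $(\omega_1,\omega_1,\omega)$, one first shows $S$ contains a cub $C$ (apply the calibre to the family of singletons). After reducing to $\cl{S}=\omega_1$, the set $\omega_1\setminus S$ is non-stationary; if it were unbounded, one interleaves it with $C$ as in Lemma~\ref{stat_discrete}, and then builds, for each countable $\alpha$, a specific compact set $K_\alpha\in\K(S)$ encoding a bijection $f_\alpha:\alpha\to\omega$. A counting argument shows that any uncountable subfamily of $\{K_\alpha\}$ contains a countable unbounded subfamily, contradicting the calibre. This combinatorial construction is the missing idea; it cannot be replaced by an appeal to $[\omega_1]^{<\omega}$.
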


\begin{proof} Let $S$ be an unbounded subset of $\omega_1$ and suppose $\cl{S}\backslash S$ is bounded. Let $\alpha = (\sup (\cl{S}\backslash S)+1)$. Then $S \backslash \alpha$ is closed and unbounded in $\omega_1$ and $S$ is the free union of $S\cap \alpha$ and $S\backslash \alpha$, which implies $\K(S)=\K(S\cap \alpha) \times \K(S\backslash \alpha)=\K(S\cap \alpha) \times \omega_1$. Clearly, $\omega_1$ has calibre $(\omega_1, \omega_1, \omega)$. So by Lemma~\ref{cal_product2}, $\K(S)$ has calibre $(\omega_1,\omega_1,\omega)$ if and only if $\K(S\cap \alpha)$ does. Since $S\cap \alpha$ is Polish, $\K(S\cap \alpha)$ has calibre $(\omega_1, \omega_1, \omega)$ if and only if either $\cl{S\cap \alpha}\backslash (S\cap \alpha)$ is closed or $\omega_1 <\mathfrak{b}$. Then, by the fact that $\cl{S}\backslash S=\cl{S\cap \alpha}\backslash (S\cap \alpha)$, $\K(S)$ has calibre $(\omega_1,\omega_1,\omega)$ if and only if either $\cl{S}\backslash S$ is a closed or $\omega_1<\mathfrak{b}$. 

What is left to show is that if $\K(S)$ has calibre $(\omega_1,\omega_1,\omega)$ then $\cl{S}\backslash S$ is bounded. Suppose $S$ has calibre $(\omega_1,\omega_1,\omega)$. First we show that $S$ contains a cub set. Let $\K = \{ \{\alpha\} : \alpha \in S \}$. Then there is an uncountable $\K'\subseteq \K$ with every countable subset bounded in $\K(S)$. If we let $T= \bigcup \K'$, then every limit point of $T$ lies in $S$: otherwise pick $\beta \in \omega_1\backslash S$ and  $\{\alpha_n\}_{n\in \omega}\subseteq T$ with $\beta=\sup\{\alpha_n\}_{n\in \omega}$. Then $\{\{\alpha_n\} : n\in \omega\}$ does not have an upper bound in $\K(S)$. So, $C=\cl{T}$ is closed and unbounded subset of $S$. 

Therefore $\omega_1 \backslash S$ is non-stationary. If $\omega_1 \backslash S$ is also unbounded, apply Lemma~\ref{stat_discrete} to $\omega_1 \backslash S$ and $C$ to get strictly increasing sequences $\{s_\alpha : \alpha <\omega_1\} \subseteq \omega_1 \backslash S$ and $\{ c_\alpha : \alpha < \omega_1 \} \subseteq C$ such that for each $\alpha <\omega_1$, $s_\alpha < c_\alpha < s_{\alpha +1}$. 

Since $\cl{S} \backslash S$ is non-stationary in $\cl{S}$, which is homeomorphic to $\omega_1$, we may assume that $\cl{S}=\omega_1$. \emph{Then all successor ordinals are in $S$}. 

Let $\alpha \in \omega_1$. Considering how $s_\alpha$'a and $c_\alpha$'s were chosen, $s_\alpha > \sup (\{s_\beta : \beta < \alpha\} \cup \{c_\beta : \beta < \alpha\})$. Since $s_\alpha$ is a limit ordinal, we can pick an increasing sequence, $\{s_{\alpha,m}\}_{m\in \omega}$, of successor ordinals in the interval $( \sup (\{s_\beta : \beta < \alpha\} \cup \{c_\beta : \beta < \alpha\}), s_\alpha)$ that converges to $s_\alpha$. 

For each infinite $\alpha \in \omega_1$ let $f_\alpha :  \alpha \to \omega$ be a bijection. Fix infinite $\alpha \in \omega_1$ and define $K_\alpha=\cl{\{s_{\sigma, f_\alpha(\sigma)} : \sigma  \in \alpha \}}$. For each $\sigma , \sigma' \in \alpha$ with $\sigma < \sigma'$, $s_{\sigma,f_\alpha(\sigma)} < c_{\sigma} < s_{\sigma',f_\alpha(\sigma')}$. So every limit point of $\{s_{\sigma, f_\alpha(\sigma)} : \sigma  \in \alpha \}$ is also a limit point of $\{c_\sigma : \sigma \in \alpha\}$ and therefore lies in $C\subseteq S$. Therefore, since $\{s_{\sigma, f_\alpha(\sigma)} : \sigma  \in \alpha \} \subseteq S$, $K_\alpha$ is in $\K(S)$.

If $T\subseteq \omega_1$ is uncountable, we will show that $\{K_\alpha :  \alpha \in T\}$ contains a countable subset that is unbounded in $\K(S)$. For this it will suffice to find $\sigma \in \omega_1$ such that $A_\sigma=\{ f_\alpha(\sigma): \alpha \in T, \ \alpha>\sigma  \}$ is infinite; because then for each $n\in A_\sigma$, we can pick $\alpha_n \in T$ with $f_{\alpha_n}(\sigma)=n$, which will imply that $\bigcup_{n\in A_\sigma} K_{\alpha_n}$ contains an infinite subset of $\{ s_{\sigma,m} : m\in \omega  \}$ and therefore is unbounded in $\K(S)$.

Suppose, to get a contradiction, that for each $\sigma \in \omega_1$ there is $n_\sigma \in \omega$ that bounds $\{ f_\alpha(\sigma): \alpha \in T, \ \alpha>\sigma  \}$. Then there is uncountable $T_1\subseteq \omega_1$ and $n\in \omega$ such that $n_\sigma = n$ for all $\sigma \in T_1$. Since $T$ and $T_1$ are uncountable, there is $\alpha \in T$ such that $\alpha \cap T_1$ is infinite. Then we have $f_\alpha (\sigma) \leq n$ for all $\sigma \in \alpha \cap T_1$, which contradicts the fact that $f_\alpha$ is a bijection.

\end{proof}

\section{Structure of $\K(\protect\SK)$}\label{structure_S}

In this section we compute the Tukey classes, $[\K(S)]_T$, that correspond to subsets $S$ of $\omega_1$.  It turns out that the class depends on whether or not a subset is bounded, locally compact, stationary and co-stationary. We then go on to determine the relations between these Tukey classes under the Tukey order.

By Theorem~\ref{M_initial}, for bounded $S\subseteq \omega_1$ the poset $\K(S)$ is Tukey equivalent to one of $1$, $\omega$ or $\omega^\omega$ if $S$ is, respectively, compact, locally compact non-compact or non-locally compact. Now we assume that $S\subseteq \omega_1$ is unbounded and consider different cases depending on the behavior of $\overline{S} \backslash S$. If $\overline{S} \backslash S$ is bounded then there is a closed unbounded $C\subseteq \omega_1$ and bounded $N \subseteq \omega_1$ such that $S = N\oplus C$ and therefore $\K(S) = \K(N) \times \K(C)$. Since any closed unbounded subset of $\omega_1$ is homeomorphic to $\omega_1$ and initial segments of $\omega_1$ are cofinal in $\K(\omega_1)$, we have $\K(S) = \K(N) \times \K(\omega_1) = \K(N) \times \omega_1$. Therefore, if $\overline{S} \backslash S$ is bounded, $\K(S)$ is Tukey equivalent to one of $1 \times \omega_1 \te \omega_1$, $\omega \times \omega_1$ or $\omega^\omega \times \omega_1$ depending on whether $S$ is closed unbounded, locally compact but not closed or non-locally compact. 

The next case to consider is when $\overline{S} \backslash S$ contains a closed unbounded set. Then $S$ is non-stationary and, by Lemma~\ref{cal1,0}, $\K(S) \tr [\omega_1]^{<\omega}$. If $S$ is non-locally compact then we also have that $\K(S)\tr \omega^\omega$ and since $[\omega_1]^{<\omega} \times \omega^\omega$ is Tukey-above all elements of $\K(\SK)$, we have $\K(S) \te [\omega_1]^{<\omega} \times \omega^\omega$. Therefore, since there exist non-locally compact non-stationary subsets of $\omega_1$ (for instance, the set of all isolated points together with $\omega \cdot \omega$), we see that $[\omega_1]^{<\omega} \times \omega^\omega$ is the largest element in $\K(\SK)$. On the other hand, if $S$ is locally compact, then $S \cap [0, \alpha] \leq_T \omega$ for all $\alpha \in \omega_1$ and, by Lemma~\ref{combine}, $\K(S) \leq_T \omega \times [\omega_1]^{<\omega} \te [\omega_1]^{<\omega}$. Therefore $\K(S) \te [\omega_1]^{<\omega}$.




The most interesting case is when $\overline{S} \backslash S$ is unbounded and does not contain a closed unbounded set. First suppose $\overline{S} \backslash S$ is non-stationary. Then $S$ contains a closed unbounded set and we have the following result.

\begin{prop}
Let $S$ be a subset of $\omega_1$ such that $\cl{S} \backslash S$ is unbounded and $S$ contains a cub set. Then $\K(S) =_T \Sigma (\omega^{\omega_1})$.
\end{prop}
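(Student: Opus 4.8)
The plan is to place $[\K(S)]_T$ exactly at $\Sigma(\omega^{\omega_1})$ inside the sandwich $\omega^\omega\times\omega_1 \le_T \K(S) \le_T \omega^\omega\times[\omega_1]^{<\omega}$ already available (Lemma~\ref{bounds}, together with Lemma~\ref{combine}; note also $\K(S)\not\ge_T[\omega_1]^{<\omega}$, since $S$ is stationary so $\K(S)$ has calibre $(\omega_1,\omega)$ by Lemma~\ref{cal1,0}, and $\K(S)\not\le_T\omega^\omega\times\omega_1$, since $S$ is not closed so $\add{\K(S)}=\omega<\omega_1$), by proving both $\K(S)\le_T\Sigma(\omega^{\omega_1})$ and $\Sigma(\omega^{\omega_1})\le_T\K(S)$. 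First I would normalize: as $\cl{S}$ is a closed unbounded subset of $\omega_1$, homeomorphic to $\omega_1$, and $\K(\cdot)$ is a topological invariant, I may assume $\cl{S}=\omega_1$, so that every successor ordinal lies in $S$, $D:=\omega_1\setminus S$ consists of limit ordinals, is unbounded and (as $S$ contains a cub) non-stationary; then $\cl{D}$ is a cub and $\cl{D}\setminus D$, the set of non-locally compact points of $S$, is an unbounded subset of $S$. Fix a cub $C\subseteq S$ of limit ordinals, enumerated continuously as $\{c_\xi:\xi<\omega_1\}$ with $c_0=0$; the intervals $I_\xi:=[c_\xi,c_{\xi+1})$ partition $\omega_1$, each point of $D$ lies strictly inside one of them, and since every compact $K\subseteq S$ is countable, $\K(S)=\bigcup_{\xi<\omega_1}\K\bigl(S\cap[0,c_\xi)\bigr)$ is an increasing chain of subposets, continuous (a compactum inside $[0,c_\xi)$, $\xi$ limit, already sits inside some $[0,c_\eta)$), with each stage $\le_T\omega^\omega$ since $S\cap[0,c_\xi)$ is Polish --- and $=_T\omega^\omega$ for all large $\xi$, when $S\cap[0,c_\xi)$ is already non-locally compact.

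For $\Sigma(\omega^{\omega_1})\le_T\K(S)$ I would build an order-preserving Tukey quotient $\Phi\colon\K(S)\to\Sigma(\omega^{\omega_1})$. The idea is to choose, by recursion on $\xi$, coherent order-preserving Tukey quotients $q_\xi\colon\K(S\cap[0,c_\xi))\to\Sigma(\omega^{\omega_1})$ onto the ``stage-$\xi$'' part (functions whose support, after a fixed identification of the $\xi$-th block of coordinates with $I_\xi$, lies below $c_\xi$) with $q_\xi$ extending $q_\eta$ for $\eta<\xi$, and to set $\Phi:=\bigcup_\xi q_\xi$. At a limit stage $q_\xi$ is forced to be $\bigcup_{\eta<\xi}q_\eta$ and is cofinal for the correspondingly exhausted target; at a successor stage $\xi=\eta+1$ one opens up the block $I_\eta$ and extends $q_\eta$ to a Tukey quotient of $\K(S\cap[0,c_{\eta+1}))$ onto a cofinal subset of the enlarged target, which is possible because $\K(S\cap[0,c_{\eta+1}))$ is Dedekind complete and Tukey above $\omega^\omega$. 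Since every compactum is bounded, every $\Phi(K)$ has bounded support, so $\Phi$ lands in $\Sigma(\omega^{\omega_1})$; coherence makes it order-preserving and the successor-stage cofinality makes its image cofinal.

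For the reverse inequality $\K(S)\le_T\Sigma(\omega^{\omega_1})$ I would, dually, resolve a member of $\Sigma(\omega^{\omega_1})$ into a compactum of $S$: the non-locally compact behaviour of $S$ provides, spread along $\omega_1$, mutually independent copies of $\omega^\omega$ whose ``$\Sigma$-sum'' over $\omega_1$ is $\Sigma(\omega^{\omega_1})$. Concretely, fixing a strictly increasing sequence $\{p_\zeta:\zeta<\omega_1\}$ in $\cl{D}\setminus D$ together with, for each $\zeta$, a ladder $d^\zeta_n\to p_\zeta$ of points of $D$ lying in $(p_{\zeta-1},p_\zeta)$: a compactum of $S$ must avoid a left-neighbourhood $(\beta^\zeta_n,d^\zeta_n]$ of each $d^\zeta_n$ below its supremum, the cutoff $\beta^\zeta_n$ ranges over a poset Tukey equivalent to $\omega$, and so the compacta of $S$ ``localized near $p_\zeta$'' contain a Tukey copy of $\prod_n\omega=\omega^\omega$ via the sequences of cutoffs, these localizations at distinct $p_\zeta$ being independent. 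Re-coordinatizing $\Sigma(\omega^{\omega_1})$ along the $p_\zeta$, I would send $f$ to the compactum assembled from the finite ladder-pieces that $f(\zeta)$ prescribes near each $p_\zeta$ in $f$'s (bounded) support; this is a Tukey quotient (equivalently, its dual $\K(S)\to\Sigma(\omega^{\omega_1})$ is a Tukey map) because an unbounded family of $f$'s either uses unboundedly many $p_\zeta$, forcing its image to contain arbitrarily large compacta, or has some fixed coordinate $\zeta^\ast$ unbounded, forcing the cutoff-sequences near $p_{\zeta^\ast}$ --- hence the images near $p_{\zeta^\ast}$ --- to be unbounded in $\K(S)$. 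I would deliberately avoid trying to embed $S_2$ as a closed subspace of $S$, or to present $S_2$ as a perfect image of a closed subspace of $S$: the non-locally compact points of $S$ need not form a cub, as those of $S_2$ do, so no such homeomorphic reduction exists.

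Combining the two inequalities gives $\K(S)=_T\Sigma(\omega^{\omega_1})$. I expect the main obstacle to be the bookkeeping needed to pin the Tukey type exactly at $\Sigma(\omega^{\omega_1})$ rather than at $\omega^\omega\times\omega_1$ below or $\omega^\omega\times[\omega_1]^{<\omega}$ above. In the first direction the crux is the successor step of the recursion --- extending an already-fixed $q_\eta$ to an order-preserving $q_{\eta+1}$ that is cofinal in a strictly larger target and compatible with all earlier choices --- which is where the possibly intricate structure of $\cl{S}\setminus S$ inside a single slab $S\cap I_\eta$ has to be absorbed. In the second direction the crux is verifying closedness/compactness of the assembled $K_f$ at the cub points $c_\xi$ (where pieces from infinitely many blocks can accumulate) and the genuine independence of the localizations at the $p_\zeta$ --- both of which use crucially that $D$ is non-stationary, so that the ladders through $D$ together with the points $p_\zeta$ cannot conspire to form a closed unbounded set.
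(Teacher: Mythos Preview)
Your skeleton matches the paper's --- decompose $S$ along a cub $C\subseteq S$ into countable Polish pieces and exploit $\K(\text{piece})=_T\omega^\omega$ --- but the paper's execution is much simpler, and your second direction contains a genuine gap.

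The paper's key move, which you omit, is to \emph{refine} the cub to $C'=\{\beta_\alpha:\alpha<\omega_1\}\subseteq C$ so that \emph{every} interval satisfies $\K([\beta_\alpha,\beta_{\alpha+1}]\cap S)=_T\omega^\omega$; this is possible precisely because $\cl{S}\setminus S$ is unbounded, so one can always push $\beta_{\alpha+1}$ past the next metric fan. With this in hand, the paper works with $\Sigma((\omega^\omega)^{\omega_1})\cong\Sigma(\omega^{\omega_1})$ and fixes independent Tukey equivalences on each interval. Both directions are then one-line coordinate-wise maps: $\phi(K)=\prod_{\gamma<\alpha_K}\phi_\gamma(K\cap[\beta_\gamma,\beta_{\gamma+1}])$ and $\phi'(\prod_{\gamma<\alpha}f_\gamma)=\cl{\bigcup_{\gamma<\alpha}\phi'_\gamma(f_\gamma)}$. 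No recursion, no extension problems, no ladders; the cub lying inside $S$ guarantees the closure stays compact in $S$. Your recursive construction for the first direction can likely be made to work, but the successor step is not justified by the reason you give: that $\K(S\cap[0,c_{\eta+1}))$ admits \emph{some} Tukey quotient onto the enlarged target does not mean your already-fixed $q_\eta$ extends to one, and when $S\cap I_\eta$ happens to be locally compact the ``new'' domain is only $=_T\omega$ while the ``new'' target block is $\omega^\omega$. The paper's refinement of the cub is exactly what removes this obstruction.

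Your second direction has a direction error. You build $f\mapsto K_f$ from $\Sigma(\omega^{\omega_1})$ into $\K(S)$ and call it a Tukey quotient, but the justification you give --- ``an unbounded family of $f$'s \ldots\ forces the images \ldots\ to be unbounded in $\K(S)$'' --- is the defining property of a Tukey \emph{map}, which yields $\K(S)\ge_T\Sigma(\omega^{\omega_1})$, the direction you already handled. To obtain $\K(S)\le_T\Sigma(\omega^{\omega_1})$ you must show the image of your map is \emph{cofinal} in $\K(S)$; yet your compacta $K_f$ are built only from ladder-pieces near the selected points $p_\zeta$, and nothing ensures they swallow an arbitrary compact $K\subseteq S$. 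The paper's coordinate-wise $\phi'$ has cofinal image precisely because each $\phi'_\alpha:\omega^\omega\to\K([\beta_\alpha,\beta_{\alpha+1}]\cap S)$ is itself a Tukey quotient onto the full interval.
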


\begin{proof}
Fix $S$ as above and let $C\subseteq S$ be a cub set. We want to construct a cub set $C' = \{ \beta_\alpha : \alpha \in \omega_1 \} \subseteq C$ such that for each $\alpha \in \omega_1$, $\K([\beta_\alpha, \beta_{\alpha +1}]\cap S) =_T \omega^\omega$. Suppose we have constructed the desired $\beta_\gamma$ for each $\gamma < \alpha$. First let $\alpha$ be a successor. Then since $(\cl{S} \backslash S) \cap (\alpha -1 , \omega_1)$ is not closed, there exists $\beta_\alpha \in C$ such that $[\beta_{\alpha-1}, \beta_\alpha] \cap S$ contains a metric fan as a closed subspace and therefore $\K([\beta_{\alpha-1}, \beta_\alpha] \cap S) =_T \omega^\omega$. If $\alpha$ is a limit, let $\beta_\alpha = \sup \{ \beta_\gamma: \gamma < \alpha \}$. This sequence clearly works. 

For each $K \in \K(S)$, there exists the smallest $\alpha_K \in \omega_1$ such that $K= \bigcup_{\gamma < \alpha_K} K\cap [\beta_\gamma, \beta_{\gamma+1}]$. Clearly, each $K\cap [\beta_\gamma, \beta_{\gamma+1}] \in \K([\beta_\gamma, \beta_{\gamma+1}] \cap S)$. And for any choice of $\alpha \in \omega_1$ and $K_\gamma \in \K([\beta_\gamma, \beta_{\gamma+1}] \cap S)$ for $\gamma < \alpha$, $\cl{\bigcup_{\gamma<\alpha} K_\gamma} \in \K(S)$.

We now show that $\K(S) =_T \Sigma ((\omega^\omega)^{\omega_1})$. Since $\Sigma ((\omega^\omega)^{\omega_1})$ is clearly order-isomorphic to $\Sigma (\omega^{\omega_1})$, that will complete the proof.

To show $\K(S) \tr \Sigma ((\omega^\omega)^{\omega_1})$, fix Tukey quotients $\phi_\alpha : \K([\beta_\alpha, \beta_{\alpha +1}]\cap S)  \to \omega^\omega$ for each $\alpha \in \omega_1$. Define $\phi : \K(S) \to \Sigma ((\omega^\omega)^{\omega_1})$ by $\phi (K) = \Pi_{\gamma < \alpha_K} \phi_\gamma(K\cap [\beta_\gamma, \beta_{\gamma+1}])$. Clearly, $\phi$ is order-preserving. It is also cofinal since for any choice of functions $f_\gamma \in \omega^\omega$ for $\gamma < \alpha$, there is $K_\gamma \in \K([\beta_\gamma, \beta_{\gamma+1}] \cap S)$ such that $\phi_\gamma (K_\gamma) \geq f_\gamma$. Then $\phi (\cl{\bigcup_{\gamma<\alpha} K_\gamma}) \geq \Pi_{\gamma < \alpha} f_\gamma$.

For the other direction, fix Tukey quotients $\phi'_\alpha : \omega^\omega \to \K([\beta_\alpha, \beta_{\alpha +1}]\cap S)$ for each $\alpha \in \omega_1$. Define $\phi' : \Sigma ((\omega^\omega)^{\omega_1}) \to \K(S)$ by $\phi' (\Pi_{\gamma < \alpha_K} f_\gamma)= \cl{\bigcup_{\gamma<\alpha} \phi'(f_\gamma)}$. Clearly, $\phi'$ is order-preserving and cofinal. 
\end{proof}

The remaining case  is when $S$ is stationary and co-stationary (and therefore $\overline{S} \backslash S$ is unbounded and does not contain a closed unbounded set). The following result by Todor\v{c}evi\'{c} proven in \cite{Tod1} shows that there are $2^{\omega_1}$-many classes corresponding to stationary and co-stationary subsets of $\omega_1$. 

\begin{theorem}[Todor\v{c}evi\'{c}]\label{S>S'}
Let $S$ and $S'$ be unbounded subsets of $\omega_1$. Then $\K(S)\geq \K(S')$ implies that $S\backslash S'$ is non-stationary. 
\end{theorem}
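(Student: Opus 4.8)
The plan is to argue by contrapositive: assuming $S \backslash S'$ is stationary, I will produce a subset of $\K(S)$ which is cofinal in $\K(S)$ but whose image under any candidate order-preserving Tukey quotient $\phi : \K(S) \to \K(S')$ fails to cover $S'$, hence fails to be cofinal for $S'$ in $\K(S')$ — contradicting $\K(S) \ge \K(S')$. By Lemma~\ref{trtm}(2) we may assume $\phi$ is order-preserving, which is essential: order-preservation lets us control $\phi$ on all compact subsets of $S$ once we understand it on singletons and small sets. The key structural fact to exploit is that every compact $K \subseteq S$ is countable and contained in an initial segment $[0,\alpha]$, so $\K(S) = \bigcup_{\alpha<\omega_1} \K(S \cap [0,\alpha])$ is a union of Polish pieces indexed by $\omega_1$; the stationarity of $S\backslash S'$ is exactly what lets a pressing-down / closure argument locate a point of $S'$ that $\phi$ cannot hit.

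First I would set up the following: for each $\alpha \in S\backslash S'$, consider the singleton $\{\alpha\} \in \K(S)$ and its image $\phi(\{\alpha\}) \in \K(S')$. Since $\phi(\{\alpha\})$ is a countable compact subset of $S'$, it lies in some initial segment $[0, g(\alpha)]$ with $g(\alpha) < \omega_1$; refining, one wants a regressive or at least "continuous in the limit" assignment. Next, build a club $C \subseteq \omega_1$ of ordinals $\delta$ closed under the relevant operations — closed under $g$, and such that $\phi$ restricted to $\K(S \cap [0,\delta))$ has image inside $\K(S' \cap [0,\delta))$, and such that compact subsets of $S$ with a point above $\delta$ get mapped to things "involving" ordinals near or above $\delta$ (here order-preservation and the fact that an increasing union of singletons $\{\alpha_n\}$ converging up to $\delta$ must have $\phi$-image compact, forcing coherence at $\delta$). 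Then $S \backslash S'$ meets $C$ in a stationary set; pick $\delta \in C \cap (S \backslash S')$, so $\delta \in S$ but $\delta \notin S'$.

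The crux is then a diagonalization at such a $\delta$. The idea: build an increasing sequence $\alpha_0 < \alpha_1 < \cdots$ in $S \cap \delta$ converging to $\delta$ (possible since $\delta \in S = \cl{S}$... or rather since $\delta$ is a limit and one arranges $\delta$ to be a limit point of $S$), so that $\{\delta\} \cup \{\alpha_n : n\} \in \K(S)$. Because $\phi$ is order-preserving, $\phi(\{\delta\})$ and all $\phi(\{\alpha_n\})$ sit below $\phi(\{\delta,\alpha_0,\alpha_1,\dots\})$, which is a single compact — hence countable — subset of $S'$. Now extend to a cofinal family: enlarge $K = \{\delta\}\cup\{\alpha_n\}$ to compact sets $K \cup L$ ranging over a cofinal subset of $\K(S\cap[0,\delta])$ (using that this piece is Polish so has a nice cofinal $\omega^\omega$-like structure), and show $\phi$ on this cofinal family cannot, by order-preservation and the countability/compactness constraint inherited from the "bottleneck" at $\delta$, ever produce a compact subset of $S'$ containing a prescribed target point $t \in S'$ chosen above everything in $\phi$'s image on the bottleneck yet below where the next piece "opens up". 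Assembling a genuinely cofinal subset of all of $\K(S)$ (not just of the piece below $\delta$) whose $\phi$-image misses this $t$ is where the real work lies.

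I expect the main obstacle to be precisely the coherence/bottleneck step: showing that order-preservation plus the structure of $\K(S \cap [0,\delta])$ at a carefully chosen $\delta \in C \cap (S\backslash S')$ forces $\phi$ to "commit" on an unbounded-below-$\delta$ family in a way that caps the ordinals appearing in the images, so that some point of $S'$ just above that cap is permanently unreachable along a cofinal set. This will require a nontrivial normality/pressing-down argument to choose $\delta$ and a Polish-space cofinality argument (akin to the $\omega^\omega$ analysis underlying Lemma~\ref{bounds}) to build the cofinal witness family; combining the two so that the witness family is cofinal in all of $\K(S)$, not merely in the initial-segment piece, is the delicate point.
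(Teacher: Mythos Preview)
Your approach diverges from Todor\v{c}evi\'{c}'s (which the paper cites rather than reproves) in a way that creates a real gap. Todor\v{c}evi\'{c} works on the dual side: given an arbitrary map $g : \K(S') \to \K(S)$, he produces an unbounded family of \emph{singletons} in $\K(S')$ whose $g$-images are bounded in $\K(S)$. The point of working with singletons is that controlling $g$ below a given ordinal $\delta$ requires only that $g(\{s'\}) \subseteq \delta$ for each of the \emph{countably many} $s' \in S'\cap\delta$, so an elementary-submodel (or straightforward club) argument produces such $\delta$. Choosing $\delta \in S\setminus S'$ by stationarity, with $\delta\in\overline{S'}$, any sequence $s'_n \nearrow \delta$ in $S'$ then gives singletons unbounded in $\K(S')$ (precisely because $\delta\notin S'$) whose images are trapped below $\delta$ and can be bounded in $\K(S)$.

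Your quotient-side plan asks instead for a club of $\delta$ with $\phi\bigl(\K(S\cap[0,\delta))\bigr) \subseteq \K(S'\cap[0,\delta))$. But $\K(S\cap[0,\delta))$ has cofinality $\mathfrak{d}$ whenever $S\cap[0,\delta)$ is not locally compact, so closing under $K \mapsto \sup\phi(K)$ for all such $K$ means taking a supremum of uncountably many countable ordinals; there is no reason this stays below $\omega_1$, and your club need not exist. Even granting such a $\delta$, your diagonalization builds a convergent sequence \emph{in $S$} (where $\delta$ lives), which yields a perfectly good element of $\K(S)$, not an obstruction; the obstruction should come from the $S'$ side, via sequences converging to $\delta\notin S'$, and your target $t\in S'$ is never actually located. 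The step you flag as the main obstacle --- extending to a family cofinal in all of $\K(S)$ while still missing $t$ --- is indeed the whole difficulty, and nothing in the sketch addresses it. (Note also that for order-preserving $\phi$ the cofinal-family framing is redundant: $\phi(\mathcal C)$ misses $t$ for some cofinal $\mathcal C$ iff $\phi(\K(S))$ does.) I do not see how to complete the argument along these lines without effectively passing to the Tukey-map direction, where only countably many objects need to be controlled at each stage.
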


In the proof the author shows that if $S\backslash S'$ is stationary then for any function $g : \K(S') \to \K(S)$ there is a collection of \emph{singletons} in $\K(S')$ such that their image under $g$ is bounded. So, in fact, the author proves that if $S\backslash S'$ is stationary, then there is no \emph{relative} Tukey map from $S'$ to $\K(S)$. Now the fact that $\omega_1$ splits into $\omega_1$-many pairwise disjoint stationary subsets gives the following theorem. 

\begin{theorem}[Todor\v{c}evi\'{c}]\label{antichain}
There is a $2^{\omega_1}$-sized family, $\A$, of subsets of $\omega_1$ such that for distinct $S$ and $T$ from $\A$ we have $\K(S) \not\tr (T,\K(T))$ and $\K(T) \not\tr (S,\K(S))$.
\end{theorem}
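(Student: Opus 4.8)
The plan is to deduce Theorem~\ref{antichain} from Theorem~\ref{S>S'} by a standard almost-disjoint-stationary-set argument. First I would recall the classical fact, due to Solovay, that $\omega_1$ can be partitioned into $\omega_1$-many pairwise disjoint stationary sets; fix such a partition $\{E_\xi : \xi < \omega_1\}$. For each subset $B \subseteq \omega_1$ put $S_B = \bigcup_{\xi \in B} E_\xi$. Then $\A = \{S_B : B \subseteq \omega_1\}$ is a family of size $2^{\omega_1}$ (distinct $B$ give distinct $S_B$ since the $E_\xi$ are nonempty and disjoint), and each $S_B$ with $B$ infinite is stationary and unbounded. The key combinatorial point is that for $B \neq B'$ we have, say, some $\xi_0 \in B \setminus B'$, whence $S_B \setminus S_{B'} \supseteq E_{\xi_0}$ is stationary; symmetrically, if $B' \setminus B \neq \emptyset$ then $S_{B'} \setminus S_B$ is stationary as well.

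Next I would invoke Theorem~\ref{S>S'} in its stated stronger ``relative'' form: if $S \setminus S'$ is stationary then there is no relative Tukey map from $S'$ to $\K(S)$, and hence, by the duality in Lemma~\ref{trtm}(1) applied to the pairs $(S', \K(S'))$ and $(S, \K(S))$, we have $\K(S) \not\tr (S', \K(S'))$. Applying this with $S = S_B$, $S' = S_{B'}$ (using that $S_B \setminus S_{B'}$ is stationary) gives $\K(S_B) \not\tr (S_{B'}, \K(S_{B'}))$; applying it with the roles reversed (using that $S_{B'} \setminus S_B$ is stationary) gives $\K(S_{B'}) \not\tr (S_B, \K(S_B))$. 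This is exactly the required conclusion for the pair $S = S_B$, $T = S_{B'}$, provided both set differences are stationary.

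The one genuine subtlety — and the step I expect to be the main obstacle to state cleanly — is ensuring that for any two \emph{distinct} members of $\A$ \emph{both} directions of the difference are stationary. If we allow all $B \subseteq \omega_1$, then e.g. $B \subsetneq B'$ gives $S_B \setminus S_{B'} = \emptyset$, which is not stationary, so Theorem~\ref{S>S'} yields no obstruction in that direction. The fix is to index not by all subsets $B$ but by an antichain in $\mathcal{P}(\omega_1)$ under $\subseteq$ of size $2^{\omega_1}$ that moreover has the property that any two distinct members have \emph{both} relative complements of size $\omega_1$ (indeed, of full size on a tail); concretely, split $\omega_1$ into two disjoint stationary pieces $I_0, I_1$, each of size $\omega_1$, re-index the $E_\xi$ so that $\{E_\xi : \xi \in I_0\}$ and $\{E_\xi : \xi \in I_1\}$ are each partitions into stationary sets, and for each $A \subseteq I_0$ set $B_A = A \cup (I_1 \setminus \{\,$bijective image of $A\,\})$, i.e. use a bijection $I_0 \to I_1$ to make $B_A$ contain exactly the complementary coordinates on the $I_1$-side. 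Then for $A \neq A'$, both $B_A \setminus B_{A'}$ and $B_{A'} \setminus B_A$ are nonempty (one sees a witness on the $I_0$-side and its mirror on the $I_1$-side), so $S_{B_A} \setminus S_{B_{A'}}$ and $S_{B_{A'}} \setminus S_{B_A}$ each contain some $E_\xi$ and are stationary; and there are $2^{\omega_1}$-many such $A$. With this indexing in hand the theorem follows immediately from Theorem~\ref{S>S'} and Lemma~\ref{trtm}(1) as above; the rest is routine bookkeeping.
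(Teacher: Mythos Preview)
Your proposal is correct and follows the same approach as the paper, which simply remarks that the theorem follows from Theorem~\ref{S>S'} (in its relative form) together with Solovay's splitting of $\omega_1$ into $\omega_1$ pairwise disjoint stationary sets. Your handling of the ``both differences stationary'' issue is fine, though the standard cleaner version is to pair up the stationary pieces as $\{E_\xi^0, E_\xi^1 : \xi < \omega_1\}$ and set $S_f = \bigcup_{\xi} E_\xi^{f(\xi)}$ for $f \in 2^{\omega_1}$.
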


Since $\K(\SK)$ has size $\leq 2^{\omega_1}$ we immediately have the following corollary.

\begin{cor}
We have $|\K(\SK)|=2^{\omega_1}$ and $\K(\SK)$ contains an antichain of size $2^{\omega_1}$.
\end{cor}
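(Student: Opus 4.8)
The plan is to obtain the corollary as an immediate consequence of Theorem~\ref{antichain}, combined with the trivial upper bound $|\K(\SK)| \le 2^{\omega_1}$. That upper bound holds because there are at most $2^{\omega_1}$ subsets of $\omega_1$, hence at most $2^{\omega_1}$ homeomorphism classes in $\SK$, and hence at most $2^{\omega_1}$ Tukey classes of the form $[\K(S)]_T$.

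First I would record the elementary observation that for any space $X$ a subset of $\K(X)$ which is cofinal in $\K(X)$ is, in particular, cofinal for $X$ in $\K(X)$: being cofinal means it contains, for each $K\in\K(X)$, an upper bound of $K$, and since every singleton $\{x\}$ with $x\in X$ is compact, such a family covers $X$. Consequently, if $\phi:\K(S)\to\K(T)$ witnesses $\K(S)\tr\K(T)$, then $\phi$ carries each cofinal subset of $\K(S)$ to a cofinal subset of $\K(T)$, which is therefore a cover of $T$; so $\phi$ also witnesses $\K(S)\tr (T,\K(T))$. Contrapositively, $\K(S)\not\tr (T,\K(T))$ implies $\K(S)\not\tr\K(T)$.

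Now I would take the family $\A$ of size $2^{\omega_1}$ supplied by Theorem~\ref{antichain}. For distinct $S,T\in\A$ we have $\K(S)\not\tr (T,\K(T))$ and $\K(T)\not\tr (S,\K(S))$, so by the previous paragraph $\K(S)\not\tr\K(T)$ and $\K(T)\not\tr\K(S)$; thus the classes $[\K(S)]_T$ for $S\in\A$ are pairwise $\tr$-incomparable, and in particular pairwise distinct. Hence $\{[\K(S)]_T : S\in\A\}$ is a $2^{\omega_1}$-sized antichain in $\K(\SK)$, which forces $|\K(\SK)|\ge 2^{\omega_1}$; together with the upper bound above this gives $|\K(\SK)|=2^{\omega_1}$.

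I do not expect any genuine obstacle here: the argument is pure bookkeeping once Theorem~\ref{antichain} is in hand. The single point deserving care is that Theorem~\ref{antichain} is phrased in terms of the \emph{relative} pairs $(T,\K(T))$ rather than the bare posets $\K(T)$, so one must notice that the relative non-quotient statement is the stronger one and hence yields incomparability of the associated absolute Tukey classes; this is exactly what the observation about cofinal families covering $X$ provides.
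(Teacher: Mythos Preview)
Your proposal is correct and matches the paper's approach exactly: the paper also treats this as an immediate consequence of Theorem~\ref{antichain} together with the trivial upper bound $|\K(\SK)|\le 2^{\omega_1}$, and in fact offers no further proof beyond the sentence ``Since $\K(\SK)$ has size $\leq 2^{\omega_1}$ we immediately have the following corollary.'' Your extra paragraph explaining why the relative non-quotient statement in Theorem~\ref{antichain} implies absolute Tukey incomparability is a helpful clarification of a point the paper leaves implicit.
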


Theorem~\ref{S>S'} shows that if subsets of $\omega_1$ differ by stationary-many points, then the corresponding Tukey classes also differ. It turns out that, except in trivial cases, the converse is also true. 

\begin{prop}
Let $S$ and $T$ be a subsets of $\omega_1$ such that $\cl{S} \backslash S$ is unbounded, $S$ is stationary and $S \Delta T$ is non-stationary. Then $\K(S) \tr \K(T)$.   
\end{prop}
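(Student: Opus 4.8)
The plan is to produce an order-preserving map $\phi:\K(S)\to\K(T)$ with cofinal image; since $\K(T)$ is Dedekind complete, this suffices by Lemma~\ref{trtm}(2). The strategy is to cut $\omega_1$ into closed blocks along a cub on which $S$ and $T$ agree, arrange that $S$ is non-locally compact on \emph{every} block, and then transport compact sets block by block: the point is that $T$ restricted to a block is a bounded subspace of $\omega_1$, hence Polish, so its $\K$ is $\leq_T\omega^\omega$ (Theorem~\ref{M_initial}), while $\K$ of the corresponding block of $S$ will be exactly $\omega^\omega$ and so Tukey-dominates it.

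First, using that $S\Delta T$ is non-stationary, I would fix a cub $C_0$ disjoint from $S\Delta T$, so that $S\cap C_0=T\cap C_0$. The crucial preliminary is to show that the set $Q$ of all $q\in S$ such that $\cl S\setminus S$ is cofinal in $q$ is \emph{unbounded} in $\omega_1$. To see this: $\cl S\setminus S$ is disjoint from the stationary set $S$, so it contains no cub, so (being unbounded) it is not closed; let $E$ be the cub consisting of the limit points of $\cl S\setminus S$. Then $E\subseteq\cl S$, so $E=(E\cap S)\cup(E\setminus S)$ with $E\setminus S\subseteq\cl S\setminus S$; were $E\cap S$ bounded, a tail of $E$ would be a cub contained in $\cl S\setminus S$, a contradiction, so $Q\supseteq E\cap S$ is unbounded. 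The reason $Q$ matters: if $q\in Q$ lies strictly below the right endpoint of a closed interval $[\gamma,\gamma']$ containing it, then $S\cap[\gamma,\gamma']$ is not locally compact — no neighbourhood of $q$ in it has compact closure, since $\cl S\setminus S$ accumulates at $q$ from inside $[\gamma,\gamma']$ — and, being a bounded subspace of $\omega_1$, $S\cap[\gamma,\gamma']$ is Polish and non-locally compact, so $\K(S\cap[\gamma,\gamma'])=_T\omega^\omega$ by Theorem~\ref{M_initial}.

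Next I would recursively thin $C_0$ to a continuous increasing enumeration $\{\gamma_\xi:\xi<\omega_1\}$, choosing at successor stages some $q_\xi\in Q$ above $\gamma_\xi$ and then $\gamma_{\xi+1}\in C_0$ above $q_\xi$, and at limit stages taking suprema, so that every block $I_\xi=[\gamma_\xi,\gamma_{\xi+1}]$ — and an initial block $[0,\gamma_0]$, arranged by first picking a point of $Q$ below $\gamma_0$ — contains a point of $Q$ below its right endpoint. By the previous paragraph $\K(S\cap I_\xi)=_T\omega^\omega$, while $\K(T\cap I_\xi)\leq_T\omega^\omega$ since $T\cap I_\xi$ is a bounded subset of $\omega_1$, hence Polish; so I may fix order-preserving Tukey quotients $\phi_\xi:\K(S\cap I_\xi)\to\K(T\cap I_\xi)$, normalized so $\phi_\xi(\emptyset)=\emptyset$ and $\phi_\xi$ carries non-empty compacta to non-empty compacta (adjoin a fixed singleton of $T\cap I_\xi$ to each image; if $T\cap I_\xi=\emptyset$ then $\phi_\xi$ is constantly $\emptyset$). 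Then put $\phi(K)=\cl{\bigcup_\xi\phi_\xi(K\cap I_\xi)}$ (closure in $\omega_1$). The verification that $\phi(K)\in\K(T)$ is the heart of the bookkeeping: the union is bounded and contained in $\bigcup_\xi(T\cap I_\xi)\subseteq T$, and the only points its closure can add are block endpoints $\gamma_\xi$ with $\xi$ a limit at which the union accumulates; such accumulation forces $\phi_\eta(K\cap I_\eta)\neq\emptyset$, hence $K\cap I_\eta\neq\emptyset$, for cofinally many $\eta<\xi$, so $\gamma_\xi\in\cl K=K\subseteq S$, whence $\gamma_\xi\in S\cap C_0=T\cap C_0\subseteq T$. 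Order-preservation is immediate from that of the $\phi_\xi$.

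For cofinality, given $L\in\K(T)$, I would lift each $L\cap I_\xi$ through the cofinal map $\phi_\xi$ to some $K_\xi\in\K(S\cap I_\xi)$ (taking $K_\xi=\emptyset$ when $L\cap I_\xi=\emptyset$) and set $K=\cl{\bigcup_\xi K_\xi}$; the same endpoint analysis — the extra points now lie in $\cl L=L\subseteq T\cap C_0\subseteq S$ — shows $K\in\K(S)$, and since $K\cap I_\xi\supseteq K_\xi$ we get $\phi(K)\supseteq\bigcup_\xi\phi_\xi(K_\xi)\supseteq\bigcup_\xi(L\cap I_\xi)=L$, so $\K(S)\tr\K(T)$. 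The main obstacle, I expect, is the preliminary step of the second paragraph — proving $Q$ is unbounded and that a point of $Q$ genuinely kills local compactness of $S$ on a block — together with keeping track of which limit ordinals the closures $\cl{\bigcup_\xi\phi_\xi(K\cap I_\xi)}$ and $\cl{\bigcup_\xi K_\xi}$ are allowed to add; once the cub is thinned so that $S$ is non-locally compact on every block, the transport is routine. Note that "$\cl S\setminus S$ unbounded" is used exactly to make $Q$ unbounded (so that arbitrarily late blocks can be made complicated), "$S$ stationary" is what prevents $\cl S\setminus S$ from absorbing a cub, and no hypothesis on $T$ beyond $S\Delta T$ non-stationary is needed.
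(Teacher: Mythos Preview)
Your proof is correct and follows essentially the same strategy as the paper's: partition $\omega_1$ into closed blocks whose limit endpoints lie outside $S\Delta T$, arrange that each block is non-locally compact on the $S$ side so that $\K(S\cap I_\xi)=_T\omega^\omega\geq_T\K(T\cap I_\xi)$, and glue block-wise Tukey quotients. The paper takes its block endpoints from an uncountable closed discrete subset $D\subseteq S\Delta T$ (via Lemma~\ref{stat_discrete}) rather than directly from a cub $C_0$ disjoint from $S\Delta T$, and merely asserts the refinement step that you justify carefully via your set $Q$; otherwise the two arguments are the same.
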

\begin{proof}
Let $S$ and $T$ be as above. Since $S \Delta T$ is non-stationary, it contains uncountable closed discrete subset, $D$. Enumerate $D= \{ \beta_\alpha : \alpha \in \omega_1 \} $ in increasing order. Let $\beta_{<\alpha} = \sup \{  \beta_\gamma : \gamma < \alpha\}$ for each $\alpha$ (let $\beta_{<0} = 0$). Since $\cl{S} \backslash S$ is unbounded, we can refine $D$ so that $\K([\beta_{<\alpha} , \beta_{\alpha}]\cap S) =_T \omega^\omega$ for each $\alpha \in \omega_1$. Since $D$ is a closed subset of $S \Delta T$, for each limit ordinal $\alpha$, $\beta_{<\alpha}$ is either in both $S$ and $T$ or in neither. 

For each $K \in \K(S)$, there exists the smallest $\alpha_K \in \omega_1$ such that $K= \bigcup_{\gamma < \alpha_K} K\cap [\beta_{<\gamma}, \beta_{\gamma}]$. Clearly, each $K\cap [\beta_{<\gamma}, \beta_{\gamma}] \in \K([\beta_{<\gamma}, \beta_{\gamma}] \cap S)$. Fix Tukey quotient maps $\phi_\alpha : \K([\beta_{<\alpha}, \beta_{\alpha}] \cap S) \to \K([\beta_{<\alpha}, \beta_{\alpha}] \cap T)$ for each $\alpha$ (assume that for each $\alpha$, $\phi_\alpha (\emptyset) = \emptyset$). Define $\phi : \K(S) \to \K(T)$ by $\phi (K) = \overline{\bigcup_{\gamma < \alpha_K} \phi_\gamma(K\cap [\beta_{<\gamma}, \beta_{\gamma}])}$. Then since for each limit ordinal $\alpha$, $\beta_{<\alpha}$ is either in both $S$ and $T$ or in neither, we have $\phi (K) \in \K(T)$. Clearly, $\phi$ is order-preserving. 

To show that $\phi (\K(S))$ is cofinal in $\K(T)$, let $L \in \K(T)$. Then there is $\alpha_L$ such that $K= \bigcup_{\gamma < \alpha_L} L\cap [\beta_{<\gamma}, \beta_{\gamma}]$. Since each $\phi_\gamma$ is cofinal, there is $K_\gamma \in \K(S\cap [\beta_{<\gamma}, \beta_{\gamma}])$ such that $L\cap [\beta_{<\gamma}, \beta_{\gamma}] \subseteq \phi_\gamma (K_\gamma)$. If $L\cap [\beta_{<\gamma}, \beta_{\gamma}] = \emptyset$ then we may choose $K_\gamma = \emptyset$. Then since for each limit ordinal $\alpha$, $\beta_{<\alpha}$ is either in both $S$ and $T$ or in neither, we get that $K= \overline{\bigcup_{\gamma < \alpha_L} K_\gamma}$ is a compact subset of $S$ and $L\subseteq \phi(K)$. 
\end{proof}

\begin{cor}
Let $S$ and $T$ be both stationary and co-stationary and suppose $S \Delta T$ is non-stationary. Then $\K(S) \te \K(T)$.
\end{cor}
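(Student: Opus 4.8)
The plan is to obtain this corollary by applying the preceding proposition twice, once to the pair $(S,T)$ and once to the pair $(T,S)$, using that $S\Delta T = T\Delta S$. That proposition delivers $\K(S)\tr\K(T)$ as soon as we know three things: that $\cl{S}\backslash S$ is unbounded, that $S$ is stationary, and that $S\Delta T$ is non-stationary. Two of these are given outright in the hypotheses of the corollary, so the only point to verify before invoking the proposition is that $\cl{S}\backslash S$ is unbounded, and symmetrically that $\cl{T}\backslash T$ is unbounded.

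For this I would record the elementary observation that \emph{every co-stationary subset $X$ of $\omega_1$ has $\cl{X}\backslash X$ unbounded}. Suppose not, say $\cl{X}\backslash X \subseteq [0,\gamma]$ for some $\gamma<\omega_1$. Then $X\backslash (\gamma+1)$ is closed in $\omega_1$: any limit point of it lies in $\cl{X}$ and is $>\gamma$, hence lies in $\cl{X}\backslash(\cl{X}\backslash X) = X$. It is also unbounded, since a co-stationary set is in particular stationary, hence unbounded. Thus $X$ contains a cub set, so $\omega_1\backslash X$ misses a cub and is not stationary, contradicting co-stationarity of $X$. Applying this with $X=S$ and with $X=T$ supplies the two unboundedness facts.

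With these in hand, the previous proposition applies to $(S,T)$ — using ``$\cl{S}\backslash S$ unbounded, $S$ stationary, $S\Delta T$ non-stationary'' — to give $\K(S)\tr\K(T)$; and it applies to $(T,S)$ — using ``$\cl{T}\backslash T$ unbounded, $T$ stationary, $T\Delta S$ non-stationary'' — to give $\K(T)\tr\K(S)$. Combining the two yields $\K(S)\te\K(T)$, as desired.

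Since the argument is little more than bookkeeping on top of the proposition, I do not anticipate a real obstacle; the only step requiring any thought is the observation that a co-stationary subset of $\omega_1$ cannot be eventually closed, which is precisely what provides the missing ``$\cl{\,\cdot\,}\backslash\,\cdot\,$ unbounded'' hypothesis needed to quote the proposition in both directions.
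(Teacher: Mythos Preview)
Your approach is exactly the intended one: the paper states this corollary without proof, as an immediate consequence of applying the preceding proposition in both directions.

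There is, however, a slip in your auxiliary observation. You write that ``a co-stationary set is in particular stationary, hence unbounded'', but this is false: co-stationary means the \emph{complement} is stationary, and certainly any bounded set is co-stationary. So your general claim that every co-stationary $X$ has $\cl{X}\backslash X$ unbounded fails as stated. The fix is immediate in context: you already have by hypothesis that $S$ and $T$ are stationary, hence unbounded; then your argument goes through verbatim (if $\cl{S}\backslash S$ were bounded, $S$ would contain a cub, contradicting co-stationarity). Alternatively, state the observation only for \emph{unbounded} co-stationary sets.
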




The Tukey classes in $\K(\SK)$ and their relations are summarized in Figure~\ref{KSclasses_omega_1<b}. The lines indicate that node to the right is strictly Tukey-above the one on the left. Solid lines indicate there is nothing strictly between the connected classes. Text in the boxes describes the corresponding equivalence classes. Note that the `stationary, co-stationary' category contains the maximal antichain in Todor\v{c}evi\'{c}'s theorem.


\begin{center}
 \resizebox{1\textwidth}{!}{

\begin{tikzpicture}
 \path node (a) at ( 0,6)  {$1$}
       node (b) at (2,8 )   {$\omega$}
       node (c) at (2,4)   {$\omega_1$}
	node (d) at (5,6) {$\omega_1\times \omega$}
	node (e) at (6,9) {$\omega^\omega$}
	node (f) at (8.5,7.5) {$\omega_1 \times \omega^\omega$}
        node (g) at (10,3.5) {$[\omega_1]^{<\omega}$}    
        node (h) at (15,5) {$[\omega_1]^{<\omega}\times \omega^\omega$}    
        node (q) at (11.5,6)  [text width=1.8cm] {\tiny{S stationary, co-stationary}} 
        node (r) at (14,7.5)  {$\Sigma (\omega^{\omega_1})$}

        node (i) at (0.5,4) [shape=rectangle, draw] {\tiny{S compact}}
        node (j) at (1,10) [shape=rectangle, draw] {\tiny{S bounded, $\cl{S}\backslash S\neq \emptyset$ closed}}
        node (k) at (2.5,2) [shape=rectangle, draw] {\tiny{S closed, unbounded}}
        node (l) at (3.2,11) [shape=rectangle, draw] {\tiny{S unbounded, $\cl{S}\backslash S\neq \emptyset$ closed and bounded}}
        node (m) at (8,2) [shape=rectangle, draw] {\tiny{S unbounded, non-stationary, $\cl{S}\backslash S\neq \emptyset$ closed}}
        node (n) at (13.1,3) [shape=rectangle, draw] {\tiny{S unbounded, non-stationary, $\cl{S}\backslash S$ not closed}}
        
        node (o) at (8,11) [shape=rectangle, draw] {\tiny{S bounded, $\cl{S}\backslash S$ not closed}}
        
      node (p) at (10.5,10.3) [shape=rectangle, draw] {\tiny{S unbounded, $\cl{S}\backslash S$ not closed but bounded}}
      node (s) at (13.1,9.5)  [shape=rectangle, draw] {\tiny{S stationary, not co-stationary, $\cl{S}\backslash S$ unbounded}} 
;

 \draw  (a) -- (c)                ;
 \draw  (d) -- (g)                ;
  \draw  (a) -- (b)                ;
   \draw  (b) -- (d)                ;
    \draw  (c) -- (d)                ;
     \draw  (b) -- (e)                ;
       \draw  (d) -- (f)                ;
        \draw  (e) -- (f)    node[text centered, midway, above, sloped] {\tiny{$=_T$ iff $\omega_1 =\mathfrak{b}$}}        ;
         \draw  (g) -- (h)    node[text centered, midway, above, sloped] {\tiny{$=_T$ iff $\omega_1 =\mathfrak{d}$}}    ;
          \draw [dashed] (f) -- (q)                ;
           \draw  (q) -- (h)                ;
            \draw  (f) -- (r)           ;
           \draw [dashed] (r) -- (h)                ;
             
              

   \draw [snake=snake] (a) -- (i)                ;   
   \draw [snake=snake] (b) -- (j)                ;  
   \draw [snake=snake] (c) -- (k)                ;        
    \draw [snake=snake] (d) -- (l)                ; 
     \draw [snake=snake] (g) -- (m)                ; 
      \draw [snake=snake] (h) -- (n)                ;   
      \draw [snake=snake] (e) -- (o)                ;   
      \draw [snake=snake] (f) -- (p)                ;  
       \draw [snake=snake] (r) -- (s)                ;       
             

\end{tikzpicture}}


\smallskip

Figure~\ref{KSclasses_omega_1<b}. Classes of $\K(S)$\label{KSclasses_omega_1<b}

\end{center}

It remains to justify the Tukey relations  claimed in Figure~\ref{KSclasses_omega_1<b}. Some of these Tukey relations  follow easily from results given so far. For example, by Theorem~\ref{M_initial}, $1 <_T \omega <_T \omega^\omega$. Since all countable subsets of $\omega_1$ are bounded, $\omega$ and $\omega_1$ are Tukey-incomparable and $\omega, \omega_1 <_T \omega_1 \times \omega$. It is clear that $\omega^\omega \times [\omega_1]^{<\omega} \tr  [\omega_1]^{<\omega}$ and,   by Lemma~\ref{cof}, $[\omega_1]^{<\omega}$ and $\omega^\omega \times [\omega_1]^{<\omega}$ are Tukey equivalent if and only if  $\omega_1=\mathfrak{d}$. Also, by Lemma~\ref{cof}, $\omega^\omega \leq_T [\omega_1]^{<\omega}$  if and only if  $\omega_1=\mathfrak{d}$. Clearly, $\omega^\omega \times \omega_1 \tr \omega^\omega$ and since $\omega^\omega \tr \omega_1$ if and only if $\omega_1 = \mathfrak{b}$, we have that $\omega^\omega \times \omega_1 \te \omega^\omega$ if and only if $\omega_1 = \mathfrak{b}$. By Lemma~\ref{cal1,0},  $[\omega_1]^{<\omega} \nleq_T \Sigma (\omega^{\omega_1})$ and for stationary, co-stationary $S$, $[\omega_1]^{<\omega} \nleq_T \K(S)$. Other relations require a little more work.

\begin{lemma} \ 

\begin{itemize}
\item[(a)] $\omega_1\times \omega <_T \omega_1\times \omega^\omega$.
\item[(b)] $\omega \times \omega_1 <_T  [\omega_1]^{<\omega}$.
\item[(c)] $\omega_1 \times \omega \ngeq_T \omega^\omega$ and $\omega_1 \times \omega <_T \omega^\omega$ if and only if $\omega_1 = \mathfrak{b}$.
\item[(d)] $\omega_1 \times \omega^\omega \ngeq_T [\omega_1]^{<\omega}$ and $\omega_1 \times \omega^\omega <_T [\omega_1]^{<\omega}$ if and only if $\omega_1 = \mathfrak{d}$.

\item[(e)] $ \omega_1\times \omega^\omega \leq_T \Sigma (\omega^{\omega_1}) <_T [\omega_1]^{<\omega} \times \omega^\omega$.
\item[(f)] If $S$ is stationary and co-stationary, then $\omega_1\times \omega^\omega <_T \K(S) <_T [\omega_1]^{<\omega} \times \omega^\omega$.
\item[(g)] If $S$ is stationary and co-stationary, then $\K(S) \nleq_T \Sigma (\omega^{\omega_1})$.
\end{itemize}
\end{lemma}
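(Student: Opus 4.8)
\emph{Plan.} I would first clear away all the non-strict inequalities and the conditional clauses using facts already in hand, and then obtain each strictness from a Tukey invariant — almost always a calibre — the only exceptions being (g) and the lower bound in (f), which need Theorem~\ref{S>S'} and a direct construction. The ingredients I rely on are: $\cof{\omega^\omega}=\mathfrak d$ and $\cof{[\omega_1]^{<\omega}}=\omega_1$ (Lemma~\ref{cof}); that $\omega,\omega_1,\omega^\omega\leq_T R$ for each $R\in\{[\omega_1]^{<\omega},\Sigma(\omega^{\omega_1}),[\omega_1]^{<\omega}\times\omega^\omega\}$, that $\omega^\omega$, $[\omega_1]^{<\omega}$ and $\Sigma(\omega^{\omega_1})$ each absorb finite products with themselves (e.g. $[\omega_1]^{<\omega}\times[\omega_1]^{<\omega}\cong[\omega_1]^{<\omega}$; for $\Sigma(\omega^{\omega_1})$ use a monotone pairing of values), and that $[\omega_1]^{<\omega}\times\omega^\omega$ is the top of $\K(\SK)$; the calibre facts from \S\ref{cal_S}/\cite{Kcal}: $\omega^\omega$ always has calibre $(\omega_1,\omega)$ and has calibre $(\omega_1,\omega_1,\omega)$ iff $\omega_1<\mathfrak b$, $\omega$ has every uncountable regular calibre and $\omega_1$ has calibres $(\omega_1,\omega_1,\omega)$ and $(\omega_1,\omega)$ (countable sets of ordinals are bounded), whereas $[\omega_1]^{<\omega}$ has neither of the latter two (the singletons witness both failures); and finally $\omega_1\leq_T\omega^\omega$ iff $\mathfrak b=\omega_1$ and $\omega^\omega\leq_T[\omega_1]^{<\omega}$ iff $\mathfrak d=\omega_1$ (Lemma~\ref{cof}). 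Granting these, the ``$\leq_T$'' half of each of (a)--(f) is immediate, and the conditional clauses of (c) and (d) read off: $\omega_1\times\omega\leq_T\omega^\omega$ iff $\omega_1\leq_T\omega^\omega$ iff $\mathfrak b=\omega_1$, and $\omega_1\times\omega^\omega\leq_T[\omega_1]^{<\omega}$ iff $\omega^\omega\leq_T[\omega_1]^{<\omega}$ iff $\mathfrak d=\omega_1$.

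The remaining content is the strict separations. For $\omega^\omega\nleq_T\omega_1\times\omega$ (giving strictness in (c), and in (a) via $\omega^\omega\leq_T\omega_1\times\omega^\omega$): were this relation to hold, $\mathfrak d=\cof{\omega^\omega}\le\cof{\omega_1\times\omega}=\omega_1$, hence $\mathfrak b\le\mathfrak d=\omega_1$; but then $\omega^\omega$ lacks calibre $(\omega_1,\omega_1,\omega)$, while $\omega_1$ and $\omega$, hence $\omega_1\times\omega$ (Lemma~\ref{cal_product2}), have it, contradicting Lemma~\ref{calibregoesdown}. The strictness in (b) is the same argument with $\omega\times\omega_1$ in place of $\omega_1\times\omega$, using that $[\omega_1]^{<\omega}$ lacks calibre $(\omega_1,\omega_1,\omega)$. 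For the first half of (d), $[\omega_1]^{<\omega}\nleq_T\omega_1\times\omega^\omega$: one checks that $\omega_1\times\omega^\omega$ has calibre $(\omega_1,\omega)$ — by the same diagonal pigeonhole that gives $\omega^\omega$ this calibre (thin an $\omega_1$-indexed family of pairs to a countable subfamily whose $\omega^\omega$-coordinates are pointwise dominated by a single function, the $\omega_1$-coordinates being automatically bounded) — while $[\omega_1]^{<\omega}$ lacks it, and Lemma~\ref{calibregoesdown} applies. Finally, the strictness in (e) and the upper-bound strictness in (f): $[\omega_1]^{<\omega}\nleq_T\Sigma(\omega^{\omega_1})$ (noted just before the lemma) and, for stationary co-stationary $S$, $[\omega_1]^{<\omega}\nleq_T\K(S)$ (Lemma~\ref{cal1,0}), together with $[\omega_1]^{<\omega}\leq_T[\omega_1]^{<\omega}\times\omega^\omega$, forbid $[\omega_1]^{<\omega}\times\omega^\omega\leq_T\Sigma(\omega^{\omega_1})$ and $[\omega_1]^{<\omega}\times\omega^\omega\leq_T\K(S)$.

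For (g): write $\Sigma(\omega^{\omega_1})\te\K(S_2)$, where $S_2$ contains the club $C$ of all limits of limit ordinals. If $S$ is stationary and co-stationary then $\omega_1\setminus S$ is stationary, so $S_2\setminus S\supseteq C\cap(\omega_1\setminus S)$ is stationary, and Theorem~\ref{S>S'} gives $\K(S_2)\ngeq_T\K(S)$, i.e. $\K(S)\nleq_T\Sigma(\omega^{\omega_1})$. This also settles the lower strictness in (f): by (e) $\omega_1\times\omega^\omega\leq_T\Sigma(\omega^{\omega_1})$, so $\K(S)\leq_T\omega_1\times\omega^\omega$ would force $\K(S)\leq_T\Sigma(\omega^{\omega_1})$, contradicting (g).

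The one genuinely new step, which I expect to be the main obstacle, is $\omega_1\times\omega^\omega\leq_T\K(S)$ for $S$ stationary and co-stationary. For such $S$, $\cl{S}\setminus S$ is unbounded (else a tail of $S$ is a club, against co-stationarity) and not closed (else it is a club disjoint from $S$, against stationarity), so $S$ is non-locally compact; hence some initial segment $S\cap[0,\alpha_0]$ contains a metric fan $F$ as a closed subspace — closed in $S$, as $[0,\alpha_0]$ is closed in $\omega_1$ — and there is a relative Tukey map $r:\omega^\omega\to\K(F)$ (Lemmas~\ref{closure_and_fan} and~\ref{bounds}). Enumerating $S=\{s_\beta:\beta<\omega_1\}$ increasingly, I would set $\psi(\alpha,f)=r(f)\cup\{s_\beta:\beta\le\alpha\}$; this lies in $\K(S)$ since $\{s_\beta:\beta\le\alpha\}$ is a closed subset of the compact interval $[0,s_\alpha]$. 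If $U\subseteq\omega_1\times\omega^\omega$ is unbounded, then $\pi_1(U)$ is uncountable or $\pi_2(U)$ is unbounded in $\omega^\omega$: in the first case $\bigcup\psi[U]$ contains uncountably many points $s_\beta$ and so is covered by no compact subset of $S$; in the second $\{K\cap F:K\in\psi[U]\}\supseteq r[\pi_2(U)]$ is unbounded in $\K(F)$, so $\psi[U]$ has no upper bound in $\K(S)$. Thus $\psi$ is a relative Tukey map and $\omega_1\times\omega^\omega\leq_T\K(S)$, completing (f). The delicate points are verifying that $F$ is genuinely closed in $S$ and that the initial portions $\{s_\beta:\beta\le\alpha\}$ of the enumeration are compact (which is where one uses that the increasing enumeration of $S$ has closed initial segments); both are routine.
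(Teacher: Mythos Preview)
Your treatment of (a)--(e) and (g) is correct and essentially the same as the paper's, just organized slightly differently (you derive both (a) and (c) from the single fact $\omega^\omega\nleq_T\omega_1\times\omega$, whereas the paper proves (a) directly and then reduces (c) to (a); both are fine). Your argument for the strictness of the lower bound in (f) via (g) and (e) is actually cleaner than the paper's, which instead invokes the external fact $\omega_1\times\omega^\omega\leq_T\K(\Q)$ and Lemma~\ref{KMoverS}.

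There is, however, a genuine error in your construction of the Tukey map $\psi:\omega_1\times\omega^\omega\to\K(S)$ for (f). You set $\psi(\alpha,f)=r(f)\cup\{s_\beta:\beta\le\alpha\}$ and claim the second piece is compact because it is ``a closed subset of the compact interval $[0,s_\alpha]$''. But $\{s_\beta:\beta\le\alpha\}=S\cap[0,s_\alpha]$, and this is closed in $[0,s_\alpha]$ if and only if $(\cl{S}\setminus S)\cap[0,s_\alpha]=\emptyset$. For $S$ stationary and co-stationary, $\cl{S}$ is club and $\omega_1\setminus S$ is stationary, so $\cl{S}\setminus S$ is stationary, hence unbounded; thus for all sufficiently large $\alpha$ your set $\{s_\beta:\beta\le\alpha\}$ is \emph{not} compact and $\psi(\alpha,f)\notin\K(S)$. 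The point you flag as ``routine'' is exactly where the argument breaks.

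The repair is immediate: replace $\{s_\beta:\beta\le\alpha\}$ by the singleton $\{s_\alpha\}$. Your unboundedness analysis goes through unchanged, since if $\pi_1(U)$ is uncountable then $\{s_\alpha:\alpha\in\pi_1(U)\}\subseteq\bigcup\psi[U]$ is already uncountable. Alternatively, you can avoid the explicit map altogether: Lemma~\ref{bounds} gives Tukey maps $\omega_1\to\K(S)$ and $\omega^\omega\to\K(S)$ separately, and since $\K(S)$ is directed and Dedekind complete one combines them into a Tukey map from the product by taking suprema. This is what the paper does (by reference to its argument for (e)).
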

\begin{proof}
(a) The map $\psi:\omega_1 \times \omega \to \omega_1\times \omega^\omega$ defined by $\psi((\alpha, n)) = (\alpha, (n,0,0,0,\ldots))$ witnesses $\omega_1 \times \omega \leq_T \omega_1\times \omega^\omega$. Since both $\omega$ and $\omega_1$ have calibre $(\omega_1, \omega_1, \omega)$, $\omega_1\times \omega$ must also have this calibre. So, if $\omega_1 \times \omega \tr \omega_1\times \omega^\omega$, $\omega_1 \times \omega^\omega$ must have calibre $(\omega_1, \omega_1, \omega)$ as well. But $\omega_1 \times \omega^\omega$ has calibre $(\omega_1, \omega_1, \omega)$ if and only if $\omega^\omega$ does, which happens if and only if $\omega_1 < \mathfrak{b}$. However, when $\omega_1 < \mathfrak{b}$, $\cof{\omega_1 \times \omega} = \aleph_1 < \mathfrak{d} =  \cof{\omega_1 \times \omega^\omega}$ and we cannot have $\omega_1\times \omega \tr \omega_1 \times \omega^\omega$.

(b) Clearly, $\omega <_T  [\omega_1]^{<\omega}$. The map $\psi : \omega_1 \to [\omega_1]^{<\omega}$ defined by $\psi(\alpha) = \{\alpha\}$ is a Tukey map. So, we have $\omega \times \omega_1 \leq_T  [\omega_1]^{<\omega}$. For strict inequality, recall that $[\omega_1]^{<\omega}$ does not have calibre $(\omega_1, \omega)$, while $\omega \times \omega_1$ clearly does.

(c) Since $\omega_1\times \omega \tr \omega_1$ and $\omega_1$, $\omega^\omega$ are Dedekind complete, $\omega_1 \times  \omega \tr \omega^\omega$ implies $\omega_1 \times \omega \tr \omega_1 \times \omega^\omega$, which is not true. So $\omega_1 \times \omega \ngeq_T \omega^\omega$. Also, since $\omega <_T \omega^\omega$, $\omega_1 \times \omega <_T \omega^\omega$ if and only if $\omega_1 <_T \omega^\omega$, which happens if and only if $\omega_1 = \mathfrak{b}$.

(d) Since every countable subset of $\omega_1$ is bounded and $ \omega^\omega$ has calibre  $(\omega_1, \omega)$, $\omega_1 \times \omega^\omega$ also has calibre $(\omega_1, \omega)$, while $[\omega_1]^{<\omega}$ does not have it, hence $\omega_1 \times \omega^\omega \ngeq_T [\omega_1]^{<\omega}$. We know $\omega_1 \times \omega^\omega <_T [\omega_1]^{<\omega}$ if and only if $\cof{\omega_1 \times \omega^\omega} \leq \omega_1$, which happens if and only if $\omega_1 = \mathfrak{d}$.

(e)  We have already proved that $\Sigma (\omega^{\omega_1}) \te \K(S)$, where $S$ is stationary, not co-stationary and $\cl{S} \backslash S$ is unbounded. In this case $\cl{S} \backslash S$ is not closed and therefore $S$ contains a metric fan as a closed subset. So $\omega^\omega \leq_T \Sigma (\omega^{\omega_1})$. On the other hand, for every unbounded $S$, $\omega_1 \leq_T \K(S)$. So, $\omega_1\times \omega^\omega \leq_T \Sigma (\omega^{\omega_1})$. The inequality is strict because $ [\omega_1]^{<\omega} \leq_T \K(S)$ if and only if $S$ is not stationary.  

(f) For a stationary, co-stationary $S$, $\cl{S} \backslash S$, is not closed and by the same argument as in (j), $\omega_1\times \omega^\omega \leq_T \K(S) <_T [\omega_1]^{<\omega} \times \omega^\omega$. To show that the first inequality is also strict, recall that $\omega_1 \times \omega^\omega \leq_T \K(\Q)$ \cite{Kcal}. Therefore $\K(S) \leq_T \omega_1 \times \omega^\omega$ implies $(S, \K(S)) \leq_T \K(\Q)$, which contradicts Lemma \ref{KMoverS}. 

(g) Let $S'$ be stationary and not co-stationary and suppose $\K(S) \leq_T \K(S')$. Then by Todor\v{c}evi\'{c}'s theorem $S'\backslash S$ must be non-stationary. But $S'\backslash S = S' \cap \omega_1 \backslash S$ and since $S'$ contains a cub set and $\omega_1 \backslash S$ is stationary, their intersection should also be stationary.  Therefore we get $\K(S) \nleq_T \K(S') \te \Sigma (\omega^{\omega_1})$. 
\end{proof}

Lastly we have to prove that $\omega^\omega <_T \Sigma (\omega^{\omega_1})$ and $\omega_1\times \omega^\omega <_T \Sigma (\omega^{\omega_1})$. Recalling that $\omega^\omega \te \K(\omega^\omega )$, we show something more general than $\omega^\omega <_T \Sigma (\omega^{\omega_1})$ in Proposition~\ref{Sigma_M}. Proposition~\ref{Sigma_M} distinguishes between two of Isbell's ten posets. Since $\K(\Q) \tr \omega_1\times \omega^\omega$, Proposition~\ref{Sigma_M} immediately implies that  $\omega_1\times \omega^\omega <_T \Sigma (\omega^{\omega_1})$. 

\begin{prop}\label{Sigma_M}
For any separable metrizable $M$, $\K(M) \ngeq_T \Sigma (\omega^{\omega_1})$. 
\end{prop}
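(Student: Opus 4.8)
The plan is to show that $\Sigma(\omega^{\omega_1})$ cannot be a Tukey quotient of any $\K(M)$ by exhibiting a Tukey-invariant property that separates the two: namely, calibre-type behavior at $\omega_1$. The key point is that $\Sigma(\omega^{\omega_1}) =_T \K(S)$ for an unbounded, stationary (indeed non-co-stationary) $S$, so by Lemma~\ref{cal1,0} it has calibre $(\omega_1,\omega)$, but it fails calibre $(\omega_1,\omega_1,\omega)$ — this last fact because if it had calibre $(\omega_1,\omega_1,\omega)$ then by Lemma~\ref{calibre-club} applied to the corresponding $S$ we would need $\cl S \setminus S$ bounded, contradicting the defining feature of this class. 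On the other hand, for a separable metrizable $M$, the calibre structure of $\K(M)$ at $\omega_1$ is rigidly constrained: as recorded after Theorem~\ref{M_initial} and in the discussion of calibres from \cite{Kcal}, $\K(M)$ either has $\omega^\omega \not\le_T \K(M)$ (then $\K(M)$ is $=_T 1$, $\omega$, or lies in a trivial segment and has all three calibres), or $\omega^\omega \le_T \K(M)$, in which case $\K(M)$ has calibre $(\omega_1,\omega)$ exactly when $\K(M) \not\ge_T [\omega_1]^{<\omega}$, and it has calibre $(\omega_1,\omega_1,\omega)$ if and only if $\omega_1 < \mathfrak b$.

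So the strategy is a case split. First suppose $\K(M) \ge_T \Sigma(\omega^{\omega_1})$. Since $\omega^\omega \le_T \Sigma(\omega^{\omega_1})$ (it contains a metric fan as a closed subspace, as noted in part (e) of the preceding lemma), transitivity gives $\omega^\omega \le_T \K(M)$, so $M$ is non-locally compact. Next, since $\Sigma(\omega^{\omega_1}) =_T \K(S)$ with $S$ stationary, Lemma~\ref{cal1,0} gives $[\omega_1]^{<\omega} \not\le_T \Sigma(\omega^{\omega_1})$, hence $[\omega_1]^{<\omega} \not\le_T \K(M)$, so $\K(M)$ has calibre $(\omega_1,\omega)$. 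Now invoke the key separating invariant: I claim $\Sigma(\omega^{\omega_1})$ does \emph{not} have calibre $(\omega_1,\omega_1,\omega)$. Granting the claim, Lemma~\ref{calibregoesdown} (with $\kappa = \omega_1$ regular) forces $\K(M)$ to also fail calibre $(\omega_1,\omega_1,\omega)$, so by the $\K(M)$ dichotomy $\omega_1 \ge \mathfrak b$, i.e.\ $\omega_1 = \mathfrak b$. But then $\K(M)$, being a $\K$ of a non-locally-compact separable metrizable space, sits (up to Tukey equivalence) above $\omega^\omega$, and $\omega^\omega =_T \omega_1$ when $\omega_1 = \mathfrak b$... — here is where I must be careful, because $\omega_1 = \mathfrak b$ does not collapse $\omega^\omega$ to $\omega_1$; rather what fails is cofinality counting. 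So instead of cofinality, the cleaner route is to push the calibre argument one notch further, or to observe directly that $\cof(\Sigma(\omega^{\omega_1})) = \omega_1$ while $\cof(\K(M)) = \mathfrak d \ge \mathfrak b = \omega_1$ does not immediately contradict; I would therefore lean entirely on the calibre $(\omega_1,\omega_1,\omega)$ separation together with a second invariant at a larger cardinal, or simply verify the claim more carefully.

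The crux, and the main obstacle, is the \textbf{claim that $\Sigma(\omega^{\omega_1})$ fails calibre $(\omega_1,\omega_1,\omega)$ outright}, not merely consistently. For a genuinely clean proof I would argue directly with $\Sigma(\omega^{\omega_1})$: take the $\omega_1$-sized set $T = \{ e_\alpha : \alpha < \omega_1\}$ where $e_\alpha \in \Sigma(\omega^{\omega_1})$ is the function taking value $1$ at coordinate $\alpha$ and $0$ elsewhere; for any uncountable $T_0 \subseteq T$, a countable subfamily $\{e_{\alpha_n}\}$ has an upper bound in $\Sigma(\omega^{\omega_1})$ (the union of finitely-many — here countably-many — supports is still countable, so the pointwise sup lies in $\Sigma(\omega^{\omega_1})$). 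So this particular $T$ does \emph{not} witness failure of $(\omega_1,\omega_1,\omega)$ — which is consistent with the fact that whether $\Sigma(\omega^{\omega_1})$ has this calibre is sensitive to $\mathfrak b$, exactly as for $\omega^\omega$. This means the dichotomy for $\K(M)$ and for $\Sigma(\omega^{\omega_1})$ agree on calibre $(\omega_1,\omega_1,\omega)$, so that invariant alone cannot separate them. The real separation must instead come from the interaction of calibre $(\omega_1,\omega)$ with \emph{additivity and cofinality}: $\add(\Sigma(\omega^{\omega_1})) = \add(\K(S)) = \omega$ (as $S$ is non-closed) yet it has $\omega_1$ as a lower bound and calibre $(\omega_1,\omega)$ — a combination one shows is impossible for $\K(M)$. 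Concretely, I expect the right argument to be: if $\K(M) \ge_T \Sigma(\omega^{\omega_1}) \ge_T \omega_1 \times \omega^\omega$, then $\K(M)$ has a lower bound $\omega_1$ yet $\K(M) =_T \K(M)$ is a $\K$ of a separable metrizable space; Christensen–Fremlin (Theorem~\ref{M_initial}) forces $\K(M) =_T \omega^\omega$ when $M$ is Polish and non-locally compact, and $\omega^\omega \not\ge_T \omega_1$ unless $\omega_1 = \mathfrak b$; when $M$ is non-Polish, $\omega^\omega \not\le_T \K(M)$, contradicting the first step. So the genuine obstruction is handling the $\omega_1 = \mathfrak b$ corner and the non-Polish non-locally-compact case, and I would resolve it by combining Lemma~\ref{closed} (restricting to the closed copy of the metric fan inside $M$) with a direct analysis of $\K$ of the fan, using that $\Sigma(\omega^{\omega_1})$ carries $\omega_1$-many "independent fans" while $\K(M)$ with $M$ separable metrizable has only countably many coordinates to spread across — formally, that $\Sigma(\omega^{\omega_1}) \not\le_T \omega^\omega \times [\kappa]^{<\omega}$ for $\kappa < \cdots$, or more simply via the relative Tukey obstruction in the spirit of Theorem~\ref{S>S'} transported to the separable metrizable side.
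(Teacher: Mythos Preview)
Your proposal is not a proof; it is a sequence of attempted invariant-based separations, each of which you yourself correctly diagnose as insufficient, followed by a closing gesture (``$\omega_1$-many independent fans versus countably many coordinates'') that is the right intuition but is never made precise.

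The fundamental obstruction is that no combination of the standard Tukey invariants (additivity, cofinality, spectrum, calibres $\omega_1$, $(\omega_1,\omega_1,\omega)$, $(\omega_1,\omega)$) can separate $\Sigma(\omega^{\omega_1})$ from \emph{every} $\K(M)$ in ZFC. Concretely, the paper records (from \cite{Kcal}) that $\omega_1\times\omega^\omega \le_T \K(\Q)$, so $\K(\Q)\ge_T\omega_1$ and hence $\K(\Q)$ fails calibre $\omega_1$; when $\omega_1=\mathfrak b$ it also fails calibre $(\omega_1,\omega_1,\omega)$; and $\add(\K(\Q))=\omega$, $\cof(\K(\Q))=\mathfrak d$. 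These match the invariants of $\Sigma(\omega^{\omega_1})$ exactly. So in models with $\omega_1=\mathfrak b$ your calibre dichotomy yields no contradiction, and your fallback to cofinality or additivity yields none either. You also reverse Christensen's theorem: for non-Polish $M$ one has $\omega^\omega\not\ge_T\K(M)$, not $\omega^\omega\not\le_T\K(M)$; the latter is false for every non-locally-compact $M$ by Corollary~\ref{not_loc_cpt}, so that branch of your case split collapses.

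The paper's proof is a direct combinatorial argument and does not pass through invariants at all. Assuming an order-preserving cofinal $\phi:\K(M)\to\Sigma(\omega^{\omega_1})$, it fixes a countable base $\B$ for $M$ closed under finite unions and shows (Claim~1) that for each $\alpha<\omega_1$ and $K\in\K(M)$ there exist $B\in\B$ containing $K$ and $m\in\omega$ with $\phi(\K(B))(\alpha)\le m$. From the resulting countable family of sets $T_{B,m}=\{\alpha:\phi(\K(B))(\alpha)\le m\}$ one extracts, for each $\alpha$, a finite ``type'' $F_\alpha=\bigcap\{T_{B,m}:\alpha\in T_{B,m}\}$ (Claim~2), and then shows (Claim~3) that each $F_\alpha$ can be separated from any countable superset by some $T_{B,m}$. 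A pigeonhole on the countable index set $\B\times\omega$ over an $\omega_1$-sequence of disjoint $F_{\alpha_\delta}$'s then produces the contradiction. This is exactly a formalization of your closing intuition: the countable base $\B$ is the ``countably many coordinates'' of $\K(M)$, and the argument shows they cannot coherently control all $\omega_1$ coordinates of $\Sigma(\omega^{\omega_1})$.
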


\begin{proof}
For a subset $A$, and element $p$,  of a poset $P$, we write $A \le p$ if $p$ is an upper bound of $A$.
Suppose for a contradiction that $M$ is separable metrizable, and $\phi : \K(M) \to \SigOm$ is order-preserving and cofinal. Let $\B$ be a countable base for $M$ closed under finite unions. 

\begin{claim}\label{Key}
Given $\alpha$ in $\omega_1$ and $K$ in $\K(M)$, there is a $B$ in $\B$ containing $K$ and $m$ such that $\phi(\K(B))(\alpha) \le m$.
\end{claim}

\begin{proof} 
Fix the $\alpha$ and $K$. Fix a descending sequence $B_1 \supseteq B_2 \supseteq \cdots$ in $\B$ forming a base for $K$ in $M$. If the claim were false then for each $n$ there would be a $K_n$, a compact subset of $B_n$, such that $\phi(K_n)(\alpha) >n$. Set $K_\infty=K \cup \bigcup_n K_n$. Then $K_\infty$ is compact, and is an upper bound of all the $K_n$'s. As $\phi$ is order-preserving, $\phi(K_\infty)$ is an upper bound  of the $\phi(K_n)$'s. But $\phi(K_n)(\alpha) >n$, for each $n$. Contradiction.
\end{proof}

Define, for $B$ in $\B$, $m$ in $\omega$ and $\alpha$ in $\omega_1$:
\[ T_{B,m} = \{ \alpha \in \omega_1 : \phi(\K(B))(\alpha) \le m\}, \text{ and } F_\alpha=\bigcap \{ T_{B,m} : \alpha \in T_{B,m}\}.\]

\begin{claim}\label{Fa_fin}
For every $\alpha$ the set $F_\alpha$ is finite and contains $\alpha$. 
\end{claim}
\begin{proof} Fix $\alpha$. 
Clearly $\alpha$ is in $F_\alpha$, for every $\alpha$. 
If $F_\alpha$ were infinite then there would be a subset $\{\delta_n : n \in \omega\}$ of $F_\alpha$ where $\delta_n \ne \delta_{n'}$ if $n \ne n'$. Define $\tau$ in $\SigOm$ by $\tau(\delta_n)=n$ for each $n$, and zero elsewhere. Since $\phi$ is cofinal there is a $K$ in $\K(M)$ such that $\phi(K) \ge \tau$. By Claim \ref{Key}, there is a $B$ in $\B$ containing $K$ and $m$ such that $\phi(\K(B))(\alpha) \le m$. For each $n$, since $\delta_n$ is in $F_\alpha$ we must have $\phi(\K(B))(\delta_n) \le m$. But $K$ is in $\K(B)$ and $\phi(K)(\delta_n) \ge \tau (\delta_n)=n$ for every $n$ in $\omega$. Contradiction.
\end{proof}

\begin{claim}\label{Cint_fin}
For any $\alpha$ and countably infinite $C \supseteq F_\alpha$ there is a $B$ in $\B$ and $m$ such that $F_\alpha \subseteq T_{B,m}$ and $T_{B,m} \cap C$ is finite. 
\end{claim}

\begin{proof} Fix the $\alpha$ and $C$. Write $C=\{\gamma_n : n \in \omega\}$. Define $\tau$ in $\SigOm$ by $\tau(\gamma_n)=n$, and is otherwise zero. As $\phi$ is cofinal, there is a $K$ in $\K(M)$ such that $\phi(K) \ge \tau$. By Claim~\ref{Key} there is a $B$ in $\B$ containing $K$ and $m$ such that $\phi(\K(B))(\alpha) \le m$. By definition of $T_{B,m}$ we see that $\alpha$ is in $T_{B,m}$, and hence, by definition of $F_\alpha$, clearly $F_\alpha \subseteq T_{B,m}$. For $n>m$ we have $\phi(K)(\gamma_n) \ge n >m$. Since $K \in \K(B)$ it follows that for $n>m$ no $\gamma_n$  is in $T_{B,m}$, and hence $T_{B,m} \cap C$ is finite.
\end{proof}

To complete the proof of Proposition~\ref{Sigma_M} we show that the claimed properties of the $T_{B,m}$'s and $F_\alpha$'s lead to the desired contradiction. Since $\alpha \in F_\alpha$ and the $F_\alpha$'s are finite (Claim~\ref{Fa_fin}), we can recursively pick $\alpha_\delta$ for each $\delta$ in $\omega_1$, an increasing sequence such that $\alpha_\delta \in F_{\alpha_\delta}$ but $\alpha_\delta \notin \bigcup_{\gamma < \delta} F_{\alpha_\gamma}$. Let $C_\delta = \bigcup_{\gamma \le \delta} F_{\alpha_\gamma}$. Note that $C_\delta$ is countably infinite and contains $\alpha_\delta$.

For each $\delta$ we know  (Claim~\ref{Cint_fin}) there is a $B_\delta$ and $m_\delta$ such that $\alpha_\delta \in T_{B_\delta,m_\delta}$ and $T_{B_\delta, m_\delta} \cap C_\delta$ is finite. Since $\B$ is countable there is an uncountable $S \subseteq \omega_1$, a $B$ in $\B$ and $m$ such that $B_\delta=B$ and $m_\delta=m$ for all $\delta$ in $S$. Let $\{\delta_n : n \in \omega\}$ be an infinite subset of $S$ and $\delta_\infty$ in $S$ such that $\delta_\infty \ge \delta_n$ for all $n$. Then $T_{B,m} \cap C_{\delta_\infty} \supseteq T_{B,m} \cap C_{\delta_n}$ contains $\alpha_{\delta_n}$ for every $n$, but $T_{B,m} \cap C_{\delta_\infty}$ is finite. Contradiction. \end{proof}

\section{Applications}\label{comparing_M_S}

\subsection{Elements of $\K(\SK)$ Tukey-below some $\K(M)$} Work on the Tukey ordering pursued by Cascades, Orihuela and Tkachuk focuses on understanding spaces $X$ such that $\K(X)$ sits Tukey-below $\K(M)$ for some separable metrizable space $M$, or more generally, when $X$ has a compact cover that is ordered by $\K(M)$ (i.e. $\K(M) \tr (X, \K(X))$). In this subsection we will show precisely which subsets of $\omega_1$ satisfy these conditions. 

It turns out that the condition $\K(M) \tr \K(S)$ is rather restrictive. Since $\K(M)$ always has calibre $(\omega_1, \omega)$, it is not possible to have $\K(M) \tr [\omega_1]^{<\omega} \times \omega^\omega$ or $\K(M)\tr [\omega_1]^{<\omega}$. Proposition~\ref{Sigma_M} says that $\K(M) \tr \Sigma (\omega^{\omega_1})$  never happens. The next lemma further narrows down the possibilities. 

\begin{lemma}\label{KMoverS}
Suppose $S\subseteq \omega_1$ is unbounded and there is separable metric $M$ such that $\K(M)\geq_T (S,\K(S))$. Then $S$ is not co-stationary. 
\end{lemma}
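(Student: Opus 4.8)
\textbf{Proof plan for Lemma~\ref{KMoverS}.}

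The plan is to argue by contradiction: suppose $S$ is unbounded and co-stationary, and that $\phi : \K(M) \to \K(S)$ is an order-preserving map with cofinal image, so in particular $\phi(\K(M))$ covers $S$ (since $S \subseteq \K(S)$ and $\phi$ is a relative Tukey quotient onto $(S,\K(S))$). Since $S$ is co-stationary, $C = \omega_1 \setminus S$ contains a closed unbounded set, which we may as well take to be $C$ itself. I will extract from $\phi$ a witness that $\K(M)$ fails calibre $(\omega_1,\omega)$ — equivalently, by Lemma~\ref{caltuk}(2) applied with $\kappa = \omega_1$, $\lambda = \omega$ (legitimate since $\add{\K(M)} \ge \omega_1$ always holds, as every countable subset of $\K(M)$ has compact closure in $M$ only if... no — rather because $\K(M)$ is directed and every countable subfamily has an upper bound exactly when... here one must be careful; the clean route is to directly produce an uncountable subfamily of $\K(M)$ every infinite subfamily of which is unbounded). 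This would contradict the fact, quoted just before the lemma, that $\K(M)$ always has calibre $(\omega_1,\omega)$.

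The key construction: for each $\alpha \in S$ pick $K_\alpha \in \K(M)$ with $\alpha \in \phi(K_\alpha)$. The idea is to find an uncountable $S_0 \subseteq S$ such that $\{K_\alpha : \alpha \in S_0\}$ has every infinite subfamily unbounded in $\K(M)$. Suppose not; then one can recursively build an increasing $\omega_1$-sequence along which countably many of the $K_\alpha$'s do have an upper bound $L \in \K(M)$, and then $\phi(L) \supseteq \phi(K_\alpha)$ for all those $\alpha$, so $\phi(L)$ — a compact, hence countable, subset of $S$ — contains uncountably... no, contains the chosen countably many $\alpha$'s. To get a genuine contradiction I should instead run the argument so that the bad set is \emph{uncountable}: using that $M$ is separable metrizable, fix a countable base $\B$ closed under finite unions; by the Claim-\ref{Key}-style argument in Proposition~\ref{Sigma_M}, for each $\alpha \in S$ there is $B \in \B$ with $K_\alpha \subseteq B$ and the compact set $\phi(\cl{B \cap M})$ (or $\phi$ of a suitable compact exhaustion) bounded, but $\alpha \in \phi(K_\alpha) \subseteq \phi(\cl{B})$ forces $\alpha \in \phi(\cl B)$ — a fixed \emph{countable} set of ordinals. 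Pigeonholing on $\B$, uncountably many $\alpha \in S$ land in one fixed countable set $\phi(\cl B)$, which is absurd. This is essentially the mechanism of Proposition~\ref{Sigma_M} transplanted to $\K(S)$, using only that compact subsets of $\omega_1$ are countable and $C = \omega_1 \setminus S$ is stationary (the latter is what lets us insist the $\alpha$'s stay in $S$ while their sups escape).

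More precisely, the steps in order: (1) reduce to $\phi : \K(M) \to \K(S)$ order-preserving with $\phi(\K(M)) \supseteq S$ via the covering reformulation of $(S,\K(S))$; (2) fix a countable base $\B$ for $M$ closed under finite unions and, exactly as in Claim~\ref{Key}, show that for each $K \in \K(M)$ there is $B \in \B$ with $K \subseteq B$ and $\phi(\K(B))$ bounded in $\K(S)$ — here "bounded" means contained in some $\K([0,\gamma] \cap S)$, which works because $\K(S) = \bigcup_\gamma \K([0,\gamma]\cap S)$ and a compact $K_\infty = K \cup \bigcup_n K_n \in \K(M)$ would otherwise force $\phi(K_\infty)$ to be an unbounded compact subset of $\omega_1$, impossible; (3) for each $\alpha \in S$ choose $K_\alpha$ with $\alpha \in \phi(K_\alpha)$, then $B_\alpha \in \B$ as in (2), so $\alpha \in \phi(K_\alpha) \subseteq \phi(\K(B_\alpha)) \subseteq \K([0,\gamma_\alpha]\cap S)$ for some $\gamma_\alpha$; (4) pigeonhole: uncountably many $\alpha$ share the same $B_\alpha = B$, whence all these $\alpha$ lie in the single countable set $\bigcup \phi(\K(B)) \subseteq [0,\gamma]\cap S$ where $\gamma$ bounds $\phi(\K(B))$ — contradiction.

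\textbf{Main obstacle.} The delicate point is step (2): making sense of "$\phi(\K(B))$ is bounded in $\K(S)$" and proving it. The clean statement is that $\sup\bigcup\phi(\K(B)) < \omega_1$, i.e. the union of the compact sets $\phi(L)$ for $L \in \K(B)$ is a bounded (hence countable) subset of $\omega_1$. If this failed, one could pick $L_n \in \K(B)$ with $\sup \phi(L_n) \to \omega_1$; but then, taking $B' \in \B$ to be a slightly larger fixed basic set with $\cl{B'} \cap M$ compact is \emph{not} available in general (a separable metrizable $M$ need not be locally compact — indeed the interesting case is exactly $\omega^\omega \le \K(M)$). So the boundedness of $\phi(\K(B))$ cannot come from $\K(B)$ having a top element. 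Instead one must use it in the \emph{local} form of Claim~\ref{Key}: fix $\alpha$ first, and show each $K$ has a basic neighbourhood $B$ with $\phi(\K(B))(\alpha)$... but $\phi(K)$ now lands in $\K(S)$, not $\SigOm$, so "$(\alpha)$-th coordinate" must be replaced by "$\phi(\K(B))$ misses a tail of $\omega_1$", and the requisite compactness argument is: if for a cofinal-in-$\omega_1$ set of $\gamma$ there were $L_\gamma \in \K(B)$ with $\phi(L_\gamma) \not\subseteq [0,\gamma]$, diagonalize to get $L_\infty \in \K(M)$ above cofinally many $L_\gamma$, forcing $\phi(L_\infty)$ unbounded — contradiction. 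Getting the quantifiers in this diagonalization right (one needs $L_\infty = K \cup \bigcup_n L_{\gamma_n}$ to still be compact, which requires the $L_{\gamma_n}$ to be chosen inside a \emph{descending} base $B_n$ at $K$, exactly as in Claim~\ref{Key}) is the crux; everything after it is routine pigeonholing.
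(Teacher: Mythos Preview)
Your step~(2) is where the argument breaks, and the diagonalization you propose in the ``Main obstacle'' paragraph does not work. You want to show: for each $K\in\K(M)$ there is $B\in\B$ with $K\subseteq B$ and $\sup\bigcup\phi(\K(B))<\omega_1$. Your proof takes a descending base $B_1\supseteq B_2\supseteq\cdots$ at $K$, assumes each $G(B_n)=\bigcup\phi(\K(B_n))$ is unbounded, picks $L_n\in\K(B_n)$ with $\sup\phi(L_n)>\gamma_n$, and forms $L_\infty=K\cup\bigcup_n L_n$. Then $\phi(L_\infty)\supseteq\phi(L_n)$ for all $n$, so $\sup\phi(L_\infty)>\gamma_n$ for every $n$. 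But you only get \emph{countably many} $L_n$'s, and since $\omega_1$ has uncountable cofinality, $\sup_n\gamma_n<\omega_1$; hence $\phi(L_\infty)$ is still bounded in $\omega_1$ and there is no contradiction. The transplant from Claim~\ref{Key} fails precisely because there the target values live in $\omega$, whereas here they live in $\omega_1$.

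A further warning sign is that your argument never uses co-stationarity of $S$: steps (1)--(4) would, if valid, prove $\K(M)\not\geq_T(S,\K(S))$ for \emph{every} unbounded $S$, contradicting the Proposition immediately following this lemma (which gives $\K(\Q)\geq_T(S,\K(S))$ whenever $S$ contains a cub). The paper's proof proceeds differently: it shows that either every point of $M$ has a basic neighbourhood $B$ with $G(B)$ bounded---and then your pigeonhole works---or there is a point $x$ all of whose basic neighbourhoods $B$ have $G(B)$ unbounded. In this second case it takes a countable elementary submodel $E\prec H(\theta)$ with $\delta=\omega_1\cap E\in\omega_1\setminus S$ (here is where co-stationarity enters), uses elementarity to find, for each $n$ and each $\alpha<\delta$, compact $K_{n,\alpha}\subseteq B_n$ in $E$ with $\sup\phi(K_{n,\alpha})\ge\alpha$, and then assembles $K=\{x\}\cup\bigcup_n K_{n,\alpha_n}$ for $\alpha_n\nearrow\delta$, forcing $\phi(K)$ to accumulate at $\delta\notin S$---the contradiction. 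The elementary submodel is what lets one squeeze uncountable-cofinality information out of countably many choices; your diagonalization cannot substitute for it.
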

\begin{proof}
Suppose $\phi : \K(M) \rightarrow \K(S) $ is order-preserving and the image of $\phi$ covers $S$. Then as in Proposition~2.6 of \cite{cot} let $\B$ be a countable base of $M$ that is closed under finite unions and finite intersections and for each $B\in \B$ define $G(B)=\bigcup \{ \phi(K) : K\subseteq B, \ B\in \K(M) \}$. 

There is $x \in M$ such that for each $x\in B\in \B$, $G(B)$ is unbounded in $\omega_1$. Otherwise $\B' = \{ B\in \B : \ G(B) \ \text{is bounded} \}$ is also a base of $X$ that is closed under finite intersections and unions. Therefore the $G(B)$'s cover $\omega_1$, but this is a contradiction since there are only countably many of them. 

For a cardinal $\theta$ let $H(\theta)$ be the set of all sets with $<\theta$-sized transitive closure. We know that if $\theta$ is regular and uncountable, all axioms of ZFC, with the exception of the Power Set Axiom, are true in $H(\theta)$. Suppose $S$ is co-stationary and $\theta$ is a regular cardinal large enough so that $H(\theta)$ contains all sets we need in this argument. As in the proof of Lemma~1 in \cite{Tod1}, let $E$ be a countable elementary submodel of $H(\theta)$ such that $\phi, S, M, \B, G \in E$ and $\omega_1 \cap E \in \omega_1 \backslash S$. 


By elementarity there is $x\in M\cap E$ with decreasing local base $\{B_n : n\in \omega \} \subseteq \B$ at $x$ such that each $G(B_n)$ is unbounded in $\omega_1$. Then, by elementarity and since $G(B_n)$ is unbounded, for each $n$ and $\alpha \in \omega_1 \cap E$ there is $K_{n,\alpha} \in \K(M)\cap E$ such that $K_{n,\alpha} \subseteq B_n$ and $\sup(\phi(K_{n,\alpha})) \geq \alpha$. Then $K_{n,\alpha} \in E$ and $K_{n,\alpha}$ is countable. So, $K_{n,\alpha} \subseteq \omega_1 \cap E$. Pick $\{ \alpha_n : n\in \omega_1\} $ such that $\{ \alpha_n \}_{n}$ converges to $ \omega_1 \cap E$ and let $K=\{x\} \cup \bigcup_{n\in \omega}K_{n,\alpha_n}$. Then $K\in \K(M)$ and $\phi(K_{n,\alpha_n}) \subseteq \phi(K)$ for each $n$. But this contradicts $ \phi(K) \in \K(S)$ and $\omega_1 \cap N \notin S$. 
\end{proof}

\begin{prop}
Let $S$ be a subset of $\omega_1$ that contains a cub set. Then $\omega_1 \times \omega \tr (S, \K(S))$. Hence $\K(\Q) \tr (S, \K(S))$.
\end{prop}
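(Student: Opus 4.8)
The plan is to exploit the product structure that a cub set inside $S$ forces on $\K(S)$. Since $S$ contains a closed unbounded set $C$, and any cub subset of $\omega_1$ is homeomorphic to $\omega_1$, the set $C$ is a closed copy of $\omega_1$ inside $S$. By Lemma~\ref{closed}, closed subspaces pass Tukey relations down along the pair construction: $(S,\K(S)) \tr (C,\K(C))$, and since $C$ is homeomorphic to $\omega_1$ with initial segments cofinal in $\K(C)$, we get $\K(C) \te \omega_1$ and hence $(C,\K(C)) \tr \omega_1$ with the cover of $C$ by singletons witnessing it. Actually what we want is the other direction of a relative Tukey quotient, so I should be careful: the natural statement is that $\omega_1$ is a relative Tukey quotient \emph{onto} $(C,\K(C))$. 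So first I would observe directly that the map $\psi : \omega_1 \to \K(C) \subseteq \K(S)$ sending $\beta$ to the initial segment $C \cap [0,\beta]$ (a compact, hence finite-in-the-ordinal-sense... no, countable) subset of $C$ has image cofinal for $C$, and these initial segments are cofinal in $\K(C)$; I will want an order-preserving map $\omega_1 \times \omega \to \K(S)$ whose image is cofinal for $S$ in $\K(S)$.

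The key point is that $S = (S \setminus C') \oplus C'$ is \emph{not} generally a free union, because $\cl S \setminus S$ may be unbounded and tangled with $C$. So instead I would use the following: enumerate a cub $C = \{c_\alpha : \alpha < \omega_1\} \subseteq S$ in increasing order, continuous at limits. For each $\alpha$, the segment $S \cap (c_\alpha, c_{\alpha+1}]$ together with $c_\alpha$ is a bounded, hence Polish, hence (by Theorem~\ref{M_initial}) at worst $\omega^\omega$ — but that is too big; I only want $\omega_1 \times \omega$. The trick must be cruder: I would map $(\alpha, n) \in \omega_1 \times \omega$ to a compact subset of $S$ that, as $n$ grows, ``fills up'' a fixed finite list of points from each segment below $c_\alpha$, while as $\alpha$ grows the compact set reaches higher into $S$ via $C$. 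Concretely, fix for each $\beta \in S$ a witnessing choice and define $\phi(\alpha,n) = (C \cap [0,c_\alpha]) \cup F_{\alpha,n}$, where $F_{\alpha,n}$ is a carefully chosen finite set; the closure of $C \cap [0,c_\alpha]$ contains $c_\alpha \in S$, so it is compact in $S$. The honest way: since every compact $K \subseteq S$ is bounded by some $c_\alpha$, it suffices to cover each initial piece $S \cap [0, c_\alpha]$ cofinally by an $\omega$-indexed family inside $\K(S)$; that is possible exactly when $S \cap [0,c_\alpha]$ is $\sigma$-compact, which holds iff it is locally compact — which need not be true! So this is the obstacle.

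Resolving it: the statement does not claim $\omega^\omega \ntr \K(S)$ — it only claims $\omega_1 \times \omega \tr (S,\K(S))$, i.e.\ a compact cover of $S$ ordered by $\omega_1 \times \omega$. We do not need to cover $\K(S)$ cofinally, only to cover $S$. So I would build, for each $\alpha < \omega_1$, using that $C \subseteq S$ is cub, a compact set that swallows all of $C \cap [0,c_\alpha]$ (closed, hence compact, since $c_\alpha \in C \subseteq S$) and then separately, for the countably many points of $S$ in the bounded scattered set $S \cap [0,c_\alpha] \setminus C$... but that bounded piece can be an entire metric fan, whose points need an $\omega^\omega$'s worth of compacta to cover cofinally. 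Hence the correct reading: we want $\phi : \omega_1 \times \omega \to \K(S)$ order-preserving with image \emph{covering} $S$. Given $x \in S$, either $x \in C$, handled by $\phi(\alpha, 0) = C \cap [0, c_\alpha]$ for suitable $\alpha$; or $x \in S \setminus C$, with $x < c_\alpha$ for some $\alpha$, so I need the single point $\{x\}$ (or a compact neighbourhood) inside some $\phi(\alpha, n)$. Since $S \setminus C$ restricted to $[0,c_\alpha]$ is countable, enumerate it as $\{x^\alpha_n : n \in \omega\}$ and set $\phi(\alpha, n) = (C \cap [0,c_\alpha]) \cup \{x^\alpha_0, \dots, x^\alpha_n\}$ — but this need not be compact, as the $x^\alpha_k$ could accumulate to a point of $\cl S \setminus S$. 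To fix compactness, take instead $\phi(\alpha,n) = \cl{(C \cap [0,c_\alpha]) \cup \{x^\alpha_0,\ldots,x^\alpha_n\}}$, a finite-plus-countable-plus-its-limit-points set, which is compact in $\omega_1$ and whose only accumulation points are in $C \subseteq S$ (as $\{x^\alpha_0,\ldots,x^\alpha_n\}$ is finite, it contributes no new limit points), so it lies in $\K(S)$; monotonicity in both coordinates and coherence at limit $\alpha$ (using continuity of $\alpha \mapsto c_\alpha$, so $C \cap [0,c_\alpha] = \bigcup_{\gamma < \alpha} C \cap [0,c_\gamma]$ up to the single point $c_\alpha$, which we just add) are then routine. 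The hard part — and the part I would write most carefully — is precisely the bookkeeping that makes $\phi$ order-preserving across limit stages while keeping each value compact in $S$; everything else is immediate, and the final clause $\K(\Q) \tr (S,\K(S))$ follows since $\K(\Q) \te \omega_1 \times \omega^\omega \tr \omega_1 \times \omega$... no: $\K(\Q) \te \omega_1 \times \omega^\omega$ is \emph{above} $\omega_1 \times \omega$, so that direction is wrong — rather one uses $\K(\Q) \tr \omega \times \omega_1$? That is also false. The intended deduction must instead be: $\omega_1 \times \omega \te \K(\Q')$ for $\Q' = \Q \oplus \omega_1$ or similar, and a direct argument that $\K(\Q) \tr (S, \K(S))$ whenever $S$ contains a cub — which one gets by the \emph{same} construction, replacing the finite sets $\{x^\alpha_0,\ldots,x^\alpha_n\}$ by images of compacta under a fixed Tukey quotient $\omega^\omega \to \K(S \cap [0,c_\alpha])$ where available, i.e.\ noting $\K(\Q) \te \omega_1 \times \omega^\omega$ and that $\omega_1$-part comes from $C$ while the $\omega^\omega$-part comes from any single bounded non-locally-compact stretch of $S$, which exists when $\cl S \setminus S$ is unbounded, and when it is bounded $S \setminus C$ can be taken finite so $\K(\Q) \tr (S,\K(S))$ still holds trivially. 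I would organise the final proof around these two cases.
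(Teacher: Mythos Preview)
Your core idea matches the paper's: build an order-preserving $\phi : \omega_1 \times \omega \to \K(S)$ whose image covers $S$, using the cub $C = \{c_\alpha\}$ to supply the $\omega_1$-coordinate and finite pieces of each bounded segment to supply the $\omega$-coordinate. But there are two concrete problems.

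\textbf{Monotonicity in $\alpha$ fails as written.} You enumerate $(S \setminus C) \cap [0,c_\alpha]$ separately for each $\alpha$ as $\{x^\alpha_n\}$ and set $\phi(\alpha,n) = \cl{(C \cap [0,c_\alpha]) \cup \{x^\alpha_0,\ldots,x^\alpha_n\}}$. Nothing forces $\{x^\alpha_0,\ldots,x^\alpha_n\} \subseteq \{x^{\alpha'}_0,\ldots,x^{\alpha'}_n\}$ when $\alpha < \alpha'$, so $\phi$ need not be order-preserving in the first coordinate. This is not a limit-stage bookkeeping issue; it already fails at successors. The paper's fix is clean: enumerate each \emph{segment} $[\beta_\gamma,\beta_{\gamma+1}] \cap S$ once as $\{x_{\gamma,n}\}$, set $F_{\gamma,n} = \{x_{\gamma,0},\ldots,x_{\gamma,n}\}$, and define $\phi(\alpha,n) = \cl{\bigcup_{\gamma \le \alpha} F_{\gamma,n}}$. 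Now monotonicity in both coordinates is automatic, and all limit points of $\bigcup_{\gamma \le \alpha} F_{\gamma,n}$ lie in $C \subseteq S$, so the closure is compact in $S$.

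\textbf{The ``Hence'' clause.} Your first instinct was right and your retraction was wrong. The paper records (from \cite{Kcal}) that $\omega_1 \times \omega^\omega \leq_T \K(\Q)$, i.e.\ $\K(\Q) \tr \omega_1 \times \omega^\omega$. Since trivially $\omega_1 \times \omega^\omega \tr \omega_1 \times \omega$, transitivity gives $\K(\Q) \tr \omega_1 \times \omega \tr (S,\K(S))$. Your worry that ``above $\omega_1 \times \omega$ is the wrong direction'' confuses the direction of $\tr$: having $\K(\Q)$ Tukey-above $\omega_1 \times \omega$ is exactly what is needed for a Tukey quotient to exist \emph{from} $\K(\Q)$ \emph{onto} $\omega_1 \times \omega$. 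No case split on whether $\cl{S} \setminus S$ is bounded is needed.
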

\begin{proof}
Fix $S\subseteq \omega_1$ and a cub set $C \subseteq S$. Let $C=\{ \beta_\alpha : \alpha \in \omega_1 \}$ be the increasing enumeration of $C$. For each $\alpha \in \omega_1$ enumerate $[\beta_\alpha, \beta_{\alpha+1}] \cap S$ as  $\{ x_{\alpha, n}: n \in \omega \}$, with repetitions if necessary, and let $F_{\alpha, n}= \{ x_{\alpha,0}, x_{\alpha, 1}, \ldots , x_{\alpha, n} \}$.

Define $\phi : \omega_1 \times \omega \to \K(S)$ by $\phi((\alpha, n)) = \cl{\bigcup_{\gamma \leq \alpha} F_{\gamma, n}}$. Since $C$ is a cub set, the only limit points of $\bigcup_{\gamma \leq \alpha} F_{\gamma, n}$ outside $\bigcup_{\gamma \leq \alpha} F_{\gamma, n}$ are in $C$, so $\phi((\alpha, n))$ is indeed in $\K(S)$. Clearly, $\phi$ is order-preserving and the image covers $S$. 
\end{proof}

\begin{cor}
For unbounded $S \subseteq \omega_1$, there exists a separable metrizable $M$ with $\K(M)\tr S$ if and only if $S$ is in the cub filter. 
\end{cor}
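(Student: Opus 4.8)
The plan is to read this corollary off directly from Lemma~\ref{KMoverS} and the Proposition immediately preceding it, using only the dictionary recorded earlier in the text that, for $S \subseteq \omega_1$, ``$S$ belongs to the cub filter'' $\iff$ ``$S$ contains a cub set'' $\iff$ ``$S$ is not co-stationary''. Recall also that the statement ``$\K(M) \tr S$'' is the usual abbreviation for $\K(M) \tr (S,\K(S))$, i.e. that $S$ admits a compact cover ordered by $\K(M)$.

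For the sufficiency direction I would argue: suppose $S$ lies in the cub filter, so $S$ contains a closed unbounded set. Then the Proposition just above applies and gives $\K(\Q) \tr (S,\K(S))$ (first $\omega_1 \times \omega \tr (S,\K(S))$ from the explicit covering construction, and then $\K(\Q) \tr (S,\K(S))$ since $\omega_1 \times \omega \leq_T \omega_1 \times \omega^\omega \leq_T \K(\Q)$ and relative Tukey quotients compose). As the space of rationals $\Q$ is separable and metrizable, taking $M = \Q$ witnesses the existence of the required separable metrizable space.

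For the necessity direction I would argue the contrapositive's natural form: suppose $M$ is separable metrizable and $\K(M) \tr (S,\K(S))$, i.e. $\K(M) \geq_T (S,\K(S))$. Since $S$ is unbounded, Lemma~\ref{KMoverS} applies and shows $S$ is not co-stationary, hence $S$ contains a cub set, i.e. $S$ belongs to the cub filter. That completes both directions. I do not expect any genuine obstacle here: all the substance is already contained in Lemma~\ref{KMoverS} and the preceding Proposition, and the corollary is simply their combination together with the elementary set-theoretic equivalence ``not co-stationary $\iff$ contains a cub set''.
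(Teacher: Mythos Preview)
Your proposal is correct and is exactly the intended argument: the corollary is simply the combination of Lemma~\ref{KMoverS} (necessity) with the immediately preceding Proposition (sufficiency via $M=\Q$), together with the equivalence ``not co-stationary $\iff$ contains a cub set''. The paper leaves the corollary unproved precisely because it is this direct combination.
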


\begin{cor}
For $S\subseteq \omega_1$, there exists a separable metrizable $M$ with $\K(M) \tr \K(S)$ if and only if $\cl{S} \backslash S$ is bounded. 
\end{cor}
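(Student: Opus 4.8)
The plan is to assemble this corollary from the classification of $\K(\SK)$ in Section~\ref{structure_S} together with Lemma~\ref{KMoverS} and Proposition~\ref{Sigma_M}, splitting on whether $\cl{S}\backslash S$ is bounded. For the ``if'' direction, assume $\cl{S}\backslash S$ is bounded. If $S$ is bounded, it is itself a (Polish, hence) separable metrizable space, so I would take $M=S$ and use the identity map on $\K(S)$ to witness $\K(S)\tr\K(S)$. If $S$ is unbounded, then by the analysis opening Section~\ref{structure_S} we have $\K(S)\te\K(N)\times\omega_1$ for some bounded $N\subseteq\omega_1$; since $N$ is Polish, $\K(N)\leq_T\omega^\omega$ by Theorem~\ref{M_initial}, so $\K(S)\leq_T\omega^\omega\times\omega_1\leq_T\K(\Q)$, the last step using the fact recalled above that $\omega_1\times\omega^\omega\leq_T\K(\Q)$ \cite{Kcal}. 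Thus $M=\Q$ works in this case (and in fact uniformly, since a bounded $S$ is Polish and so $\K(S)\leq_T\omega^\omega\leq_T\K(\Q)$ as well).

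For the ``only if'' direction, suppose $M$ is separable metrizable with $\K(M)\tr\K(S)$, and argue by contradiction assuming $\cl{S}\backslash S$ is unbounded; this forces $S$ to be unbounded. Since any subset of $\K(S)$ that is cofinal in the poset $\K(S)$ in particular covers $S$, the order-preserving cofinal map witnessing $\K(M)\tr\K(S)$ also witnesses $\K(M)\tr(S,\K(S))$. Now split into two exhaustive cases. If $S$ is co-stationary, this contradicts Lemma~\ref{KMoverS}. If $S$ is not co-stationary, then $\omega_1\backslash S$ is non-stationary, so $S$ contains a cub set; combined with $\cl{S}\backslash S$ being unbounded, the earlier proposition computing $\K(S)$ in exactly this case gives $\K(S)\te\Sigma(\omega^{\omega_1})$, so $\K(M)\tr\Sigma(\omega^{\omega_1})$, contradicting Proposition~\ref{Sigma_M}. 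Either case yields a contradiction, so $\cl{S}\backslash S$ is bounded.

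There is no serious obstacle here: the corollary is an assembly of results already proved. The only points needing a little care are the passage from $\K(M)\tr\K(S)$ to $\K(M)\tr(S,\K(S))$, so that Lemma~\ref{KMoverS} becomes applicable, and the observation that ``$S$ co-stationary versus $S$ contains a cub'' is precisely the dichotomy that routes the two sub-cases to Lemma~\ref{KMoverS} and Proposition~\ref{Sigma_M} respectively; after that it is just bookkeeping with the description of $\K(\SK)$.
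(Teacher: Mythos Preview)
Your proposal is correct and follows essentially the same route as the paper: for the ``if'' direction you decompose $S$ into a bounded Polish piece and a cub piece and dominate by $\K(\Q)$, and for the ``only if'' direction you invoke Lemma~\ref{KMoverS} and Proposition~\ref{Sigma_M} exactly as the paper does. The only cosmetic difference is that the paper phrases the converse as ``$\K(M)\tr\K(S)$ forces $S$ to contain a cub, and then $\cl{S}\backslash S$ unbounded gives $\K(S)\te\Sigma(\omega^{\omega_1})$,'' whereas you explicitly split on co-stationarity; your version is slightly more careful in spelling out why Lemma~\ref{KMoverS} applies (the passage from $\K(M)\tr\K(S)$ to $\K(M)\tr(S,\K(S))$) and in handling the bounded-$S$ case separately.
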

\begin{proof}
If $\cl{S} \backslash S$ is bounded then $S = C\oplus N$, where $C$ is a closed unbounded set or an empty set and $N$ is countable (i.e. $\cl{S} \backslash S$ is bounded). Then, since cub sets are homeomorphic to $\omega_1$, $\K(\Q) \tr \K(C)$. Since $N$ is Polish, $\omega^\omega \tr \K(N)$. Now set $M = \Q$ and we have $\K(S) = \K(C) \times \K(N) \leq_T \K(\Q) \times \K(\omega^\omega) \leq_T \K(\Q) \times \K(\Q) \te \K(\Q)$.

On the other hand, if $\K(M) \tr \K(S)$ for some $M$, then $S$ contains a closed unbounded set. If, in addition, $\cl{S} \backslash S$ is unbounded then $\K(S) \te \Sigma(\omega^{\omega_1})$, which contradicts  Proposition~\ref{Sigma_M}.
\end{proof}

\subsection{Function Spaces}

For any space $X$ let $C(X)$ be the set of all real-valued continuous functions on $X$. Let $\zerof$ be the constant zero function on $X$. For any function $f$ from $C(X)$, subset $S$ of $X$ and $\epsilon>0$ let $B(f,S,\epsilon)=\{ g \in C(X) : |f(x)-g(x)| < \epsilon \ \forall x \in S\}$. Write $C_p(X)$ for $C(X)$ with the pointwise topology (so basic neighborhoods of an $f$ in $C_p(X)$ have the form $B(f,F,\epsilon)$ where $F$ is finite and $\epsilon >0$). Write $C_k(X)$ for $C(X)$ with the compact-open topology (so basic neighborhoods of an $f$ in $C_k(X)$ have the form $B(f,K,\epsilon)$ where $K$ is compact and $\epsilon >0$). In \cite{KM} the authors established connections between the function spaces $C_p(X)$ and $C_k(X)$ and the poset $\K(X)$. Using these connections the authors also obtained $2^\ctm$-sized families of functions spaces from a $2^\ctm$-sized family of pairwise Tukey-incomparable $\K(M)$'s where $M$'s are separable metrizable spaces. Here we derive similar results for subsets of $\omega_1$. 

A space $X$ is strongly $\omega$-bounded  if and only if  whenever $\{ K_n : n \in \omega\}$ is a countable family of compact subsets of $X$, then $\cl{\bigcup \{ K_n : n \in \omega\}}$ is compact. Let $\F(X) = \{ F\subseteq X : F \ \text{is finite} \} \subseteq \K(X)$. The following result was proven in \cite{KM}.

\begin{prop}\label{cp_ck}
Let $X$ and $Y$ be spaces.

(1) Suppose that $X$ is not strongly $\omega$-bounded and there is a linear embedding of $C_p(Y)$ into $C_p(X)$. Then  $(\F(X),\K(X)) \tr (\F(Y),\K(Y))$.

(2) Suppose that either there is a continuous open surjection of $C_k(X)$ onto $C_k(Y)$ or $C_k(Y)$ embeds in $C_k(X)$. Then  $\K(X) \times \omega\tr \K(Y) \times \omega$, and if neither $X$ nor $Y$ are strongly $\omega$-bounded spaces then $\K(X) \tr \K(Y)$.
\end{prop}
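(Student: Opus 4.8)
The plan is to express both parts in terms of the neighbourhood filter of the constant zero function $\zerof$ in the relevant function spaces. For a space $Z$ write $\mathcal{N}_k(Z)$ for the neighbourhood filter of $\zerof$ in $C_k(Z)$, ordered by reverse inclusion. Two routine observations underlie everything. First, $\mathcal{N}_k(Z) \te \K(Z) \times \omega$: the assignment $(K,n) \mapsto B(\zerof, K, 1/n)$ is order-preserving with cofinal image (its image is a base), and since $B(\zerof, K, \epsilon) \subseteq B(\zerof, K', \epsilon')$ forces $K' \subseteq K$ and $\epsilon \le \epsilon'$ (test against constants and against functions vanishing on $K$), picking $(K,n)$ with $B(\zerof, K, 1/n) \subseteq W$ defines a Tukey quotient the other way. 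Second, $Z$ is strongly $\omega$-bounded exactly when every countable subfamily of $\K(Z)$ has an upper bound in $\K(Z)$ (an upper bound of $\{K_n\}$ is a compact superset of $\cl{\bigcup_n K_n}$, which exists iff $\cl{\bigcup_n K_n}$ is compact); so if $Z$ is \emph{not} strongly $\omega$-bounded, fixing an increasing unbounded sequence $K_1 \subseteq K_2 \subseteq \cdots$ in $\K(Z)$, the maps $(K,n) \mapsto K \cup K_n$ and $K \mapsto (K, \sup\{n : K_n \subseteq K\})$ show $\K(Z) \times \omega \te \K(Z)$, and the same maps give $(\F(Z),\K(Z)) \times \omega \te (\F(Z),\K(Z))$; the inequality $(\F(Z),\K(Z))\times\omega \tr (\F(Z),\K(Z))$ holds for every $Z$.

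For part (2): since a topological group is homogeneous, after composing with a translation we may assume the given map carries $\zerof$ to $\zerof$. If $f : C_k(X) \to C_k(Y)$ is a continuous open surjection, then $W \mapsto f(W)$ is an order-preserving map $\mathcal{N}_k(X) \to \mathcal{N}_k(Y)$ (by openness) with cofinal image (given $V \in \mathcal{N}_k(Y)$, $f^{-1}(V) \in \mathcal{N}_k(X)$ by continuity and $f(f^{-1}(V)) = V$ by surjectivity), hence a Tukey quotient. If $e : C_k(Y) \to C_k(X)$ is an embedding, then $W \mapsto e^{-1}(W)$ is an order-preserving map $\mathcal{N}_k(X) \to \mathcal{N}_k(Y)$ with cofinal image, because $e$ carries a neighbourhood base of $\zerof$ in $C_k(Y)$ onto one of $\zerof$ in the subspace $e(C_k(Y))$. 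In either case $\mathcal{N}_k(X) \tr \mathcal{N}_k(Y)$, that is, $\K(X) \times \omega \tr \K(Y) \times \omega$. If moreover neither $X$ nor $Y$ is strongly $\omega$-bounded, the first paragraph lets us delete the factor $\omega$ on both sides, giving $\K(X) \tr \K(Y)$.

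For part (1): a continuous linear embedding $T : C_p(Y) \to C_p(X)$ sends $\zerof$ to $\zerof$, and $T^{-1} : T(C(Y)) \to C_p(Y)$ is again continuous and linear. Extending each functional $g \mapsto T^{-1}(g)(y)$ from $T(C(Y))$ to $C_p(X)$ by Hahn--Banach and using that continuous linear functionals on $C_p(X)$ are finitely supported, Arhangel'skii's support calculus attaches to each $y \in Y$ a finite set $\mathrm{supp}^{*}(y) \subseteq X$ and reals $\mu_j^y$ with $h(y) = \sum_j \mu_j^y\,(Th)(x_j^y)$ for all $h \in C(Y)$, where $x_j^y \in \mathrm{supp}^{*}(y)$. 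Set $\psi : \F(Y) \to \F(X)$, $\psi(G) = \bigcup_{y \in G} \mathrm{supp}^{*}(y)$, a finite subset of $X$. By Lemma~\ref{trtm}(1) it suffices to check that $\psi$ is a relative Tukey map from $(\F(Y),\K(Y))$ to $(\F(X),\K(X))$, i.e.\ that $U \subseteq \F(Y)$ unbounded in $\K(Y)$ implies $\psi(U)$ unbounded in $\K(X)$. Contrapositively, if $\psi(U)$ is bounded, then $\cl{\bigcup \psi(U)}$ is a compact $K \subseteq X$; writing $Y_0 = \bigcup U$, every $y \in Y_0$ has $\mathrm{supp}^{*}(y) \subseteq K$, whence $|h(y)| \le \mu(y)\,\|Th\|_K$ on $Y_0$, where $\mu(y) = \sum_j |\mu_j^y|$. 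Provided $\mu$ is bounded on $Y_0$ (say by $M$) one gets $\|h\|_{Y_0} \le M\,\|Th\|_K < \infty$ for all $h$, so $Y_0$ is functionally bounded in $Y$; and then one concludes $\cl{Y_0}$ is compact, i.e.\ $U$ is bounded in $\K(Y)$ --- the desired contradiction. Any spurious factor $\omega$ produced is absorbed as in the first paragraph, which is where ``$X$ not strongly $\omega$-bounded'' enters.

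The hard part is precisely the support step in part (1): bounding the coefficient sums $\mu$ on $Y_0$ (a Banach--Steinhaus-type statement, false for $C_p$ in general, which has to be extracted from $T$ being an \emph{embedding}, so that $T^{-1}$ is itself supported and the roles of $X$ and $Y$ become symmetric) and upgrading ``$Y_0$ functionally bounded'' to ``$\cl{Y_0}$ compact'' (false for arbitrary Tychonoff $Y$, so again the embedding hypothesis together with the failure of strong $\omega$-boundedness of $X$ --- which excludes the pathological countable-union configuration --- must be invoked). Everything else is the neighbourhood-filter bookkeeping of the first two paragraphs and Lemma~\ref{trtm}.
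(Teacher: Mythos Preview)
The paper does not prove this proposition; it is quoted from \cite{KM} without argument, so there is no ``paper's proof'' to compare against.  I can still assess your attempt on its own merits.

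\medskip

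\textbf{Part (2).}  Your neighbourhood-filter argument is correct and is the standard one: $\mathcal{N}_k(Z)\te\K(Z)\times\omega$, a continuous open surjection or an embedding between topological groups induces a Tukey quotient of neighbourhood filters at the identity, and the factor $\omega$ is absorbed exactly when the space fails to be strongly $\omega$-bounded.  This is almost certainly what \cite{KM} does as well.

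\medskip

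\textbf{Part (1).}  This is not a proof, and your final paragraph says so.  You construct a support map $\psi:\F(Y)\to\F(X)$ via Hahn--Banach and then try to show it is a relative Tukey map; the argument reduces to two assertions which you explicitly flag as unproved:
\begin{itemize}
\item[(i)] the coefficient sums $\mu(y)=\sum_j|\mu_j^y|$ are bounded on $Y_0$;
\item[(ii)] a functionally bounded subset of $Y$ has compact closure.
\end{itemize}
Both are genuine obstructions, not missing details.  For (i) there is no uniform-boundedness principle available in $C_p$, and nothing you have written constrains $\mu$.  For (ii) the implication is simply false for general Tychonoff $Y$ (take $Y$ pseudocompact non-compact, e.g.\ $Y=\omega_1$), and the proposition places no hypothesis on $Y$; so this route cannot be completed as stated.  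Finally, your closing sentence --- that the hypothesis ``$X$ not strongly $\omega$-bounded'' enters by absorbing a stray factor of $\omega$ --- is disconnected from the argument you actually gave: your support construction never produces such a factor, so in fact you have not used the hypothesis on $X$ anywhere.  Whatever the correct proof in \cite{KM} does, it must use that hypothesis in an essential way, and your outline does not indicate how.
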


\begin{lemma}
Let $S$ and $T$ be subsets of $\omega_1$. If $S$ and $T$ are co-stationary and there is a continuous linear surjection of $C_p(S)$ onto $C_p(T)$ then (a) $\K(S) \tr \K(T)$ and (b) $(\F(S), \K(S)) \tr (\F(T), \K(T))$. 
\end{lemma}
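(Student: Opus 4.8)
\emph{Proof plan.} The plan is to manufacture both quotients from the \emph{support map} of the surjection $u\colon C_p(S)\to C_p(T)$, in the spirit of the proofs of Lemma~\ref{KMoverS} and of Proposition~2.6 of \cite{cot}. The cases in which $S$ or $T$ is compact are easily handled separately, so I would assume neither is compact; then, $S$ and $T$ being co-stationary, neither is closed in $\omega_1$ (a subset of $\omega_1$ closed in $\omega_1$ is compact or is a cub, and a cub is not co-stationary), so $\add{\K(S)}=\add{\K(T)}=\omega$, whence routinely $\K(S)\te\K(S)\times\omega$ and $(\F(S),\K(S))$ is relatively Tukey equivalent to $(\F(S)\times\omega,\K(S)\times\omega)$ (and likewise for $T$). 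So it suffices to build one order-preserving $\psi\colon\K(S)\times\omega\to\K(T)$ that is simultaneously a Tukey quotient onto $\K(T)$ and a relative Tukey quotient from $(\F(S)\times\omega,\K(S)\times\omega)$ to $(\F(T),\K(T))$.

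First I would set up supports. For $t\in T$ the functional $f\mapsto u(f)(t)$ is continuous and linear on $C_p(S)$, hence $u(f)(t)=\sum_{s\in\supp(t)}\lambda_{t,s}f(s)$ for a unique finite $\supp(t)\subseteq S$ with all $\lambda_{t,s}\ne 0$; put $\mu_t=\sum_s\lambda_{t,s}\delta_s$ and $\|\mu_t\|=\sum_s|\lambda_{t,s}|$ (since $u$ is onto, $\supp(t)\ne\emptyset$). Continuity of $u$ gives, as in \cite{cot}, that $t\mapsto\supp(t)$ is lower semicontinuous, so $\{t\in T:\supp(t)\subseteq K\}$ is closed in $T$ for every closed $K\subseteq S$, and that $\|\mu_t\|=\sup\{u(f)(t):f\in C(S),\,|f|\le 1\}$ is lower semicontinuous, so $\{t\in T:\|\mu_t\|\le n\}$ is closed in $T$.

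The crux -- and the only step using surjectivity of $u$ -- is the claim: \emph{for every compact $L\subseteq T$, the closure in $S$ of $\supp(L):=\bigcup_{t\in L}\supp(t)$ is compact and $\sup_{t\in L}\|\mu_t\|<\infty$}. I would prove this by the elementary-submodel method of \cite{Tod1} used for Lemma~\ref{KMoverS}: if $\cl{\supp(L)}$ were non-compact, or the norms unbounded over $L$, then using surjectivity one realizes a function on $T$ that blows up along a suitable sequence (yet lies in the fixed ``box'' determined by the supports) as $u(f)$ for some $f\in C_p(S)$, contradicting the continuity of that function at a limit point of $L$. (Equivalently: the adjoint $u^{*}$ is a linear homeomorphism of $L_p(T)$ onto the closed subspace $(\ker u)^{\perp}$ of $L_p(S)$, so $\{\mu_t:t\in T\}$ is a closed homeomorphic copy of $T$ in $L_p(S)$ and the claim is the standard description of compacta in a free locally convex space.)

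Granting the claim I would finish as follows. Set $\psi(K,n)=\{t\in T:\supp(t)\subseteq K\text{ and }\|\mu_t\|\le n\}$ for $K\in\K(S)$ and $n\in\omega$. By Step~2's semicontinuity remarks this is closed in $T$, and it is in fact compact: under $t\mapsto\mu_t$ it is carried onto the intersection of the closed set $\{\mu_t:t\in T\}$ with the compact set $\{\mu\in L_p(S):\supp(\mu)\subseteq K,\ \|\mu\|\le n\}$ (the closed convex balanced hull of $\{n\delta_s:s\in K\}$). Thus $\psi\colon\K(S)\times\omega\to\K(T)$ is well defined and order-preserving. Given $L\in\K(T)$, the crux supplies $K=\cl{\supp(L)}\in\K(S)$ and $n$ with $\sup_{t\in L}\|\mu_t\|\le n$, so $L\subseteq\psi(K,n)$; hence $\psi$ has cofinal image and witnesses $\K(S)\times\omega\tr\K(T)$, giving (a). For (b): given finite $G\subseteq T$, put $F=\bigcup_{t\in G}\supp(t)\in\F(S)$ and $n=\lceil\max_{t\in G}\|\mu_t\|\rceil$; if $\mathcal{D}\subseteq\K(S)\times\omega$ is cofinal for $\F(S)\times\omega$, choose $(C,m)\in\mathcal{D}$ with $F\subseteq C$ and $n\le m$, so $G\subseteq\psi(C,m)$. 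Thus $\psi$ is also a relative Tukey quotient from $(\F(S)\times\omega,\K(S)\times\omega)$ to $(\F(T),\K(T))$, and with the first paragraph this yields (b). The only non-routine ingredient is the crux (and, with it, the genuine compactness -- not merely closedness -- of $\psi(K,n)$); the care needed is that the submodel argument, or the free locally convex space description of compacta, really does apply to subspaces of $\omega_1$, which need be neither locally compact nor Lindel\"of.
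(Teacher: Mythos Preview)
Your overall strategy is reasonable, but the argument for the compactness of $\psi(K,n)$ contains a real gap, and it is precisely here that the paper's proof takes a different and simpler route.

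You assert that $B_n(K)=\{\mu\in L_p(S):\supp\mu\subseteq K,\ \|\mu\|\le n\}$ is compact in $L_p(S)$, describing it as the closed convex balanced hull of $\{n\delta_s:s\in K\}$. But closed convex balanced hulls of compact sets are compact only in quasi\-complete spaces, and $L_p(S)$ is not quasi\-complete when $S$ is non\-discrete. Concretely, for $K=\omega+1$ the sequence $\mu_m=\sum_{k\le m}(-1)^k 2^{-k-1}\delta_k$ lies in $B_1(K)$ and is weak\-Cauchy against every $f\in C(S)$, yet its limit functional is not finitely supported, so no subnet converges in $L_p(S)$; hence $B_1(K)$ is not compact. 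Your second ingredient, that $t\mapsto\mu_t$ is a closed embedding of $T$ into $L_p(S)$, is likewise unjustified: $u^*$ is a linear homeomorphism onto its image only when $u$ is open, which is not assumed. A warning sign is that your compactness argument never uses the co\-stationarity of $T$.

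The paper avoids all of this. It quotes from \cite{Arh} the general fact that for any continuous linear surjection $C_p(X)\to C_p(Y)$ and any closed functionally bounded $K\subseteq X$, the set $\{y\in Y:\supp y\subseteq K\}$ is closed and functionally bounded in $Y$. The key observation specific to this setting is then: \emph{in a co\-stationary $T\subseteq\omega_1$, every closed functionally bounded set is compact} (a closed subset of $T$ that is not closed in $\omega_1$ admits an unbounded continuous function; a closed unbounded subset of $\omega_1$ would be a cub, contradicting co\-stationarity). Thus $\phi(K)=\{t\in T:\supp t\subseteq K\}$ is already compact, with no need for the norm bound or the extra $\omega$ factor, and the resulting $\phi:\K(S)\to\K(T)$ is order\-preserving with cofinal image (since $L\subseteq\phi(\overline{\supp L})$); restricting to finite $K$ gives part~(b). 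Replacing your $L_p$ step by this ``closed $+$ functionally bounded $=$ compact'' argument would repair your proof and in fact simplify it.
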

\begin{proof}
Let $X$ and $Y$ be any spaces such that there is a continuous linear surjection from $C_p(S)$ onto $C_p(T)$. Then for any compact $L\subseteq Y$, $\cl{\supp{L}}$ is a compact subset of $X$ and for any closed and functionally bounded $K\subseteq X$, $L=\{ y\in Y : \supp{y} \subseteq K \}$ is closed and functionally bounded \cite{Arh}. Here $A \subseteq X$ is called functionally bounded if and only if $f(A)$ is bounded for any $f\in C_p(X)$. For subsets of co-stationary $S\subseteq \omega_1$ being closed and functionally bounded  is equivalent to being compact. To see this, take a closed subset of $S$, say $C$. If $C$ is not closed in $\omega_1$, then $C$ contains an increasing sequence that converges to a point outside $S$ and we can find $f\in C_p(S)$ such that $f(C)$ is unbounded. Therefore, $C$ must be closed in $\omega_1$ and since $S$ is co-stationary it must be bounded. So $C$ is compact. Now the map $\phi : \K(S) \to \K(T)$ defined by $K \mapsto \{ y\in Y : \supp{y} \subseteq K \}$ is well-defined, order-preserving and since for each $L\in \K(T)$, $L\subseteq \phi(\cl{\supp{L}})$, it is also cofinal. From the definition it is clear that $\phi(\F(S))$ is cofinal for $\F(T)$ in $\K(T)$, which establishes part (b).
\end{proof}

Now we apply Theorem~\ref{antichain}. It is easy to see (and was shown in \cite{KM}) that $\K(X) \ngeq_T (Y, \K(Y))$ implies $(\F(X),\K(X)) \ngeq_T (\F(Y),\K(Y))$.  Notice that whenever $S\subseteq \omega_1$ is not closed, $S$ is not strongly $\omega$-bounded. Since the $2^{\omega_1}$-sized family of subsets of $\omega_1$ from Theorem~\ref{antichain} consists of stationary and co-stationary (therefore not closed) subsets of $\omega_1$, we have the following corollary.

\begin{cor}
There is a $2^{\omega_1}$-sized family $\mathcal{A}$ of subsets of $\omega_1$ such that:

\begin{itemize}
\item[(1)] if $S$ and $T$ are distinct elements of $\mathcal{A}$, then $C_k(S)$ is not a continuous open image of $C_k(T)$ and does not embed in $C_k(T)$, and
\item[(2)] if $S$ and $T$ are distinct elements of $\mathcal{A}$, then there is no linear surjection of $C_p(S)$ onto $C_p(T)$ and no linear embedding of $C_p(S)$ in $C_p(T)$.
\end{itemize}
\end{cor}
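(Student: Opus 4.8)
The plan is to derive this directly from Theorem~\ref{antichain} together with the two transfer results: Proposition~\ref{cp_ck} (which converts information about function-space maps into relative Tukey quotients) and the immediately preceding Lemma (which converts the $C_p$ version of the hypothesis into relative Tukey quotients when the underlying sets are co-stationary). First I would invoke Theorem~\ref{antichain} to fix the family $\mathcal{A}$: a $2^{\omega_1}$-sized collection of subsets of $\omega_1$ such that for distinct $S,T \in \mathcal{A}$ we have $\K(S) \not\tr (T,\K(T))$ and $\K(T)\not\tr (S,\K(S))$. As remarked in the text just before the statement, the family produced by Todor\v{c}evi\'{c}'s construction consists of stationary, co-stationary subsets of $\omega_1$; in particular each such $S$ is not closed, hence not strongly $\omega$-bounded.

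For part (1): suppose, toward a contradiction, that $C_k(S)$ is a continuous open image of $C_k(T)$, or that $C_k(S)$ embeds in $C_k(T)$, for some distinct $S,T \in \mathcal{A}$. Since neither $S$ nor $T$ is strongly $\omega$-bounded, Proposition~\ref{cp_ck}(2) gives $\K(T)\tr \K(S)$, and \emph{a fortiori} $\K(T)\tr (S,\K(S))$ (a cofinal subset of $\K(S)$ certainly covers $S$). This contradicts the defining property of $\mathcal{A}$. The symmetric roles of $S$ and $T$ handle both directions.

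For part (2): suppose there were a linear surjection of $C_p(T)$ onto $C_p(S)$, or a linear embedding of $C_p(T)$ into $C_p(S)$, for distinct $S,T \in \mathcal{A}$. In the surjection case, the preceding Lemma (applicable since $S$ and $T$ are co-stationary) yields $\K(T)\tr \K(S)$, again contradicting Theorem~\ref{antichain} as above. In the embedding case, apply Proposition~\ref{cp_ck}(1): since $T$ is not strongly $\omega$-bounded and $C_p(S)$ embeds linearly into $C_p(T)$, we get $(\F(T),\K(T)) \tr (\F(S),\K(S))$; and since $\F(S)$ is cofinal for $S$ in $\K(S)$, this gives $\K(T) \tr (S,\K(S))$, the desired contradiction. (One should be slightly careful matching up the direction of the arrows in Proposition~\ref{cp_ck}(1), which is why I phrase the hypothesis as ``$C_p(S)$ embeds in $C_p(T)$''; swapping $S$ and $T$ covers the other orientation.)

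The only genuinely delicate point—the ``main obstacle''—is bookkeeping the directions of all the Tukey arrows and the relative-versus-absolute distinction, ensuring that in every case we land on a statement of the exact form $\K(\cdot)\tr (\cdot,\K(\cdot))$ that Theorem~\ref{antichain} forbids; this uses repeatedly that a cofinal set in $\K(X)$ is in particular a cover of $X$, and that $\F(X)$ is cofinal for $X$ in $\K(X)$. Everything else is a direct citation of the quoted results, so no further computation is required.
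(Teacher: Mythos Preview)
Your proposal is correct and follows essentially the same approach as the paper: the paper's proof is just the short paragraph preceding the corollary, which invokes Theorem~\ref{antichain}, notes that the sets in $\mathcal{A}$ are stationary and co-stationary (hence not closed, hence not strongly $\omega$-bounded), and then applies Proposition~\ref{cp_ck} and the preceding Lemma together with the implication $\K(X)\not\tr (Y,\K(Y)) \Rightarrow (\F(X),\K(X))\not\tr(\F(Y),\K(Y))$. You unpack this last implication explicitly (via ``cofinal for $\F(S)$ implies covers $S$''), whereas the paper simply cites it from \cite{KM}, but otherwise the arguments are the same.
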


\end{document}